\newcommand{\setwindow}[5]{
\def\xmin{#1}%
\def\ymin{#2}%
\def\xmax{#3}%
\def\ymax{#4}%
\pstFPsub\viewingwidth{#3}{#1}%
\pstFPdiv\result{\strip@pt#5}{\viewingwidth}%
\psset{unit=\result pt}}
\newcommand{\din}{d^{\text{in}}}
\newcommand{\dout}{d^{\text{out}}}
\newtheorem{theorem}{Theorem}
\newtheorem{corollary}{Corollary}
\newtheorem{proposition}{Proposition}
\newtheorem{lemma}{Lemma}
\newtheorem{remark}{Remark}
\def\dim{\mathop{\rm dim}}
\def\R{{\mathbb R}}
\def\ie{{i.e.,} }
\def\eg{{e.g. }}
\def\S{{\mathcal S}}
\def\H{{\mathcal H}}
\def\C{{\mathcal C}}
\def\B{{\mathcal B}}
\def\K{{\mathcal K}}
\def\Z{{\mathcal Z}}
\def\01{\ensuremath{0\mathord{-}1}}
\newcommand{\Tr}{\text{Tr}}
\newcommand{\bi}{\begin{list}{$\bullet$}{\setlength{\parsep}{0pt}\setlength{\itemsep}{0pt}}}
\newcommand\res{\mathop{\hbox{\vrule height 7pt width .3pt depth 0pt
\vrule height .3pt width 5pt depth 0pt}}\nolimits}
\newcommand{\norm}[1]{\Big\lVert#1\Big\rVert}
\newcounter{claim} 
\newcounter{mynotes}
\def\Xint#1{\mathchoice
{\XXint\displaystyle\textstyle{#1}}%
{\XXint\textstyle\scriptstyle{#1}}%
{\XXint\scriptstyle\scriptscriptstyle{#1}}%
{\XXint\scriptscriptstyle\scriptscriptstyle{#1}}%
\!\int}
\def\XXint#1#2#3{{\setbox0=\hbox{$#1{#2#3}{\int}$ }
\vcenter{\hbox{$#2#3$ }}\kern-.6\wd0}}
\def\dashint{\Xint-}
\def\Xsum#1{\mathchoice
{\XXsum\displaystyle\textstyle{#1}}%
{\XXsum\textstyle\scriptstyle{#1}}%
{\XXsum\scriptstyle\scriptscriptstyle{#1}}%
{\XXsum\scriptscriptstyle\scriptscriptstyle{#1}}%
\!\sum}
\def\XXsum#1#2#3{{\setbox0=\hbox{$#1{#2#3}{\sum}$ }
\vcenter{\hbox{$#2#3$ }}\kern-.5\wd0}}
\def\dashsum{\Xsum-}
\DeclareMathOperator{\prob}{\mathbb P}
\DeclareMathOperator{\avg}{\mathbb E}
\title{The ratio-cut polytope and K-means clustering
\thanks{A. De Rosa was supported in part by National Science Foundation award DMS-1906451-2112311.}}
\author{Antonio De Rosa
\thanks{Department of Mathematics, University of Maryland, 4176 Campus Dr, College Park, MD 20742, USA.
Department of Decision Sciences and BIDSA, Bocconi University, Via R\"ontgen 1, 20136, Milano, MI, Italy.
             E-mail: {\tt  anderosa@umd.edu}.
             }
\and
Aida Khajavirad
\thanks{Department of Industrial and Systems Engineering, Lehigh University, Bethlehem, PA 18015, USA.
             E-mail: {\tt aida@lehigh.edu}.
             }
}
\date{}
\begin{document}

\maketitle

\begin{abstract}
We introduce the ratio-cut polytope defined as the convex hull of ratio-cut vectors corresponding to all partitions of $n$ points
in $\R^m$ into at most $K$ clusters. This polytope is closely related to the convex hull of the feasible region of
a number of clustering problems such as
K-means clustering and spectral clustering. We study the facial structure of the ratio-cut polytope and derive several types of facet-defining inequalities. We then consider the problem of K-means clustering and introduce a novel linear programming (LP) relaxation for it. Subsequently,
we focus on the case of two clusters and derive a sufficient condition under which the proposed LP relaxation recovers the underlying clusters exactly. Namely, we consider the stochastic ball model, a popular generative model for K-means clustering, and we show that if the separation distance between cluster centers satisfies $\Delta > 1+\sqrt 3$, then the LP relaxation recovers the planted clusters with high probability. This is a major improvement over the only existing recovery guarantee for an LP relaxation of K-means clustering stating that recovery is possible with high probability if and only if $\Delta > 4$.
Our numerical experiments indicate that the proposed LP relaxation significantly outperforms a popular semidefinite programming relaxation in recovering the planted clusters.
\end{abstract}

{\bf Key words.} \emph{Ratio-cut polytope; K-means clustering; Linear programming, Stochastic ball model.}

\vspace{0.1cm}

{\bf AMS subject classifications.} \emph{90C05, 	90C57, 62H30, 49Q20,  68Q87. }

\section{Introduction}
Clustering is concerned with partitioning a given
set of data points $\{x^i\}_{i=1}^n$ in $\R^m$ into $K$ subsets such that some dissimilarity function
among the points is minimized. Consider a partition of $[n] := \{1,\ldots, n\}$; \ie $\{\Gamma_k\}_{k =1}^K$ such that
$\Gamma_a \cap \Gamma_b = \emptyset$ for all $a, b \in [K] := \{1,\ldots, K\}$ and $\cup_{k \in [K]} {\Gamma_k} = [n]$, where we further assume
$\Gamma_k \neq \emptyset$ for all $k \in [K]$. \emph{K-means clustering} partitions the data points into $K$ clusters by minimizing the total squared distance between each data point and the corresponding cluster center:
\begin{align}
\label{Kmeans1}
\min  \quad & \sum_{k=1}^K{\sum_{i \in \Gamma_k} { \norm{x^i - \frac{1}{|\Gamma_k|}\sum_{j \in \Gamma_k}{x^j}}_2^2}}\\
\text{s.t.} \quad & \{\Gamma_k\}_{k \in [K]} \; {\rm is \; a \; partition \; of \; [n]}.  \nonumber
\end{align}
It is well-known that Problem~\eqref{Kmeans1} is NP-hard even when there are only two clusters~\cite{BodGeo19}
or when the data points are in $\R^2$~\cite{MahNimVar09}.
The most famous heuristic for K-means clustering is Lloyd’s algorithm~\cite{Lloyd82}
which, in spite of its effectiveness, in practice
may converge to a local minimum that is arbitrarily bad compared to the global minimum~\cite{Kan02}.
Moreover,  numerous constant-factor approximation algorithms have been developed, both for the fixed number of clusters $K$ and for the fixed dimension $m$ (see for example~\cite{Kan02,FriRezSal19}). In this paper, we are interested in the quality of convex relaxations for K-means clustering.

Several equivalent reformulations of K-means clustering, including a nonlinear binary program~\cite{Rao71},
a binary semidefinite program (SDP)~\cite{PenWei07}, and a completely positive program~\cite{PraHan18} are given in the literature.
In the following, we present an alternative formulation that we will use to construct our new convex relaxation
(see~\cite{LiLiLi20} for the derivation).
Consider partition of $[n]$; let ${\bf 1}_{\Gamma_k}$, $k \in [K]$ be the indicator vector of
the $k$th cluster; \ie  the $i$th component of ${\bf 1}_{\Gamma_k}$ is defined as: $({\bf 1}_{\Gamma_k})_i = 1$ if $i \in \Gamma_k$ and $({\bf 1}_{\Gamma_k})_i = 0$ otherwise. Define
the \emph{partition matrix} as
\begin{equation}\label{pm}
Z = \sum_{k=1}^K{\frac{1}{|\Gamma_k|}{\bf 1}_{\Gamma_k}{\bf 1}^T_{\Gamma_k}}.
\end{equation}
Denote by $D \in \R^{n \times n}$ the distance matrix with the $(i,j)$th entry given by $d_{ij} = ||x^i - x^j||_2^2$.
Then it can be shown that Problem~\eqref{Kmeans1} can be equivalently written as:
\begin{align}
\label{Kmeans2}
\min \quad & \sum_{i,j \in [n]} {d_{ij} Z_{ij}}\\
\text{s.t.} \quad & Z \; {\rm is \; a \; partition\; matrix \; defined \; by~\eqref{pm}}.  \nonumber
\end{align}
\paragraph{\textbf{SDP relaxations.}}
The most popular convex relaxations for K-means clustering are SDP relaxations; indeed, both theoretical and numerical properties
of these algorithms have been extensively investigated in the literature (see for example~\cite{PenXia05,PenWei07,PraHan18}).
These relaxations are obtained by observing that any partition matrix $Z$ satisfies the following properties:
\begin{align}
& Z {\bf 1}_n = {\bf 1}_n, \quad {\rm Tr}(Z) = K\label{sdpR}, \\
& Z \succeq 0, \quad Z \geq 0 \nonumber,
\end{align}
where ${\bf 1}_n$ is a $n$-vector with all entries equal to $1$ and $ {\rm Tr}(Z)$ is the trace of the matrix $Z$. Moreover,
$Z \succeq 0$ and $Z \geq 0$ mean that the matrix $Z$ is positive semidefinite and component-wise nonnegative, respectively. A widely-studied SDP relaxation of the K-means clustering, often referred to as ``Peng-Wei relaxation''~\cite{PenWei07}, is given by
\begin{align}\label{SDP}
\min \quad & \sum_{i,j \in [n]} {d_{ij} Z_{ij}}\\
\text{s.t.} \quad & Z {\bf 1}_n = {\bf 1}_n, \quad {\rm Tr}(Z) = K,  \quad Z \succeq 0, \quad Z \geq 0,\quad Z = Z^T  \nonumber.
\end{align}
If by solving Problem~\eqref{SDP}, we obtain a minimizer $\bar Z$ that is a partition matrix as defined by~\eqref{pm}, then $\bar Z$ is also optimal for the original problem, as the feasible region of Problem~\eqref{SDP} contains the feasible region of Problem~\eqref{Kmeans2}. Otherwise, a common approach is to devise a \emph{rounding scheme} to extract a feasible solution of~\eqref{Kmeans2} from the relaxation solution $\bar Z$. This two-phase approach has been successfully employed for clustering various synthetic and real data sets~\cite{PenWei07,PraHan18}.

\paragraph{\textbf{Recovery guarantees under stochastic models.}}
A recent stream of research in data clustering is concerned with obtaining conditions under which a planted clustering corresponds to the unique optimal solution of a SDP relaxation under suitable generative models~\cite{Awaetal15,AbbBanHal16,HajWuXu16,MixVilWar17,IduMixPetVil17,Ban18,AgaBanKoiKol18, LinStr19,LiLiLi20}.
Such conditions are often referred to as~\emph{exact recovery} (henceforth, simply \emph{recovery}) conditions.
Generally speaking, these works first provide deterministic sufficient conditions for a given clustering assignment to be the unique optimal solution of a SDP relaxation via the construction of dual certificates.
Subsequently, they show that those conditions hold with high probability under a given random model.
Throughout this paper, we say that an optimization problem~\emph{recovers} the planted clusters
if its unique optimal solution corresponds to the planted clusters.

The stochastic ball model (SBM) is the most widely-studied generative model for K-means clustering. In this distributional setting,
we assume that there exist $K$ clusters in $\R^m$ and the data in
each cluster consists of $\frac{n}{K}$ points sampled from a uniform distribution within a ball of unit radius.
The question is what is the \emph{minimum separation distance} $\Delta$ between cluster centers needed for a convex relaxation to recover
these $K$ clusters with high probability (\ie probability tending to 1 as $n \rightarrow \infty$).
Clearly, a convex relaxation recovers the planted clusters
only if the original K-means problem succeeds in doing so. Perhaps surprisingly, the recovery threshold for K-means clustering
under the SBM remains an open question.
Recently, in~\cite{BodGeo19}, the authors prove that when the points are generated uniformly on two $m$-dimensional touching spheres for $m \geq 3$,
in the limit, \ie when the empirical samples is replaced by the probability measure,
K-means clustering identifies the two individual spheres as the two clusters.
In this paper, we show that the same recovery result is valid for the SBM.

In~\cite{Awaetal15}, the authors
consider the Peng-Wei relaxation as defined by~\eqref{SDP} and show that if $\Delta > 2 \sqrt{2} (1+1/\sqrt{m})$, then the SDP recovers the planted clusters with high probability.
In~\cite{IduMixPetVil17}, the authors consider the same SDP and prove that recovery is guaranteed with high probability if
$\Delta > 2+K^2/m$, which is near optimal in $m \gg K^2$ regime. The authors of~\cite{LiLiLi20}
obtain yet another recovery condition for Peng-Wei relaxation given by $\Delta > 2+O(\sqrt{K/m})$ which is an improvement over the previous condition when $K$ is large. Moreover, in~\cite{LiLiLi20}, the authors prove that if
$\Delta < 1+\sqrt{1+2/(m+2)}$, then with high probability, Peng-Wei relaxation fails in recovering the planted clusters.

\paragraph{LP relaxations.} It is widely accepted that state-of-the-art LP solvers outperform the SDP counterparts
in both speed and scalability. However, for K-means clustering,  to date, there exists no LP relaxation with desirable theoretical or
computational properties. In~\cite{Awaetal15}, the authors consider the following LP relaxation of K-means clustering:
\begin{align}
\label{badLP}
\min \quad & \sum_{i,j \in [n]} {d_{ij} Z_{ij}}\\
\text{s.t.} \quad & Z {\bf 1}_n = {\bf 1}_n, \quad {\rm Tr}(Z) = K, \quad Z \geq 0, \quad Z = Z^T,  \quad Z_{ij} \leq Z_{ii}, \quad \forall i, j \in [n]\nonumber.
\end{align}
Subsequently, they show that under the SBM, Problem~\eqref{badLP} recovers the planted clusters with high probability
if and only if $\Delta > 4$.
We should remark that  if $\Delta > 4$, any two points within a particular cluster are closer to each other than any two points from
different clusters, and hence in this case, recovery follows from a simple distance thresholding. They complement this negative theoretical result
with poor numerical performance to conclude the ineffectiveness of the ``natural'' LP relaxation for K-means clustering.
In~\cite{dPIdaTim20,AidaTonio21}, the authors study the recovery properties of LP relaxations for community detection and joint object matching.

\paragraph{\textbf{Our contribution.}}
In this paper, we propose a novel LP relaxation for K-means clustering with favorable theoretical and numerical properties.
We start by introducing the~\emph{ratio-cut polytope} ${\rm RCut}^K_n$ defined as the convex hull of ratio-cut vectors corresponding to
all assignments of $n$ points to at most $K$ nonempty clusters.
We are interested in a certain facet of ${\rm RCut}^K_n$, denoted by ${\rm RCut}^{=K}_n$, which corresponds to all ratio-cut vectors
of exactly $K$ nonempty clusters. As we detail later, ${\rm RCut}^{=K}_n$ is the polytope obtained by projecting out variables $Z_{ii}$, $i \in [n]$ from convex hull of the feasible region of Problem~\eqref{Kmeans2}.
We then study the facial structures of ${\rm RCut}^K_n$ and ${\rm RCut}^{=K}_n$ and
derive several classes of facet-defining inequalities for these polytopes. This in turn enables us to obtain a new LP relaxation
for K-means clustering. We then address the question of recovery when there are two clusters. First, we obtain a deterministic
sufficient condition under which the planted clusters correspond to an optimal solution of the LP relaxation. Subsequently,
we focus on the SBM, and prove that if $\Delta > 1+ \sqrt{3}$, the LP relaxation recovers the planted clusters with high probability. While this recovery guarantee is significantly better than the recovery guarantee of Problem~\eqref{badLP}, our empirical observations suggest that it is overly conservative. Indeed, our numerical experiments on a collection of randomly generated test problems indicate that the LP relaxation outperforms the Peng-Wei SDP relaxation.

\paragraph{\textbf{Organization.}}
The remainder of the paper is structured as follows. In Section~\ref{sec:ratioCutPoly}, we study the facial structure of the ratio-cut polytope.
In Section~\ref{sec:relax} we introduce a new LP relaxation for K-means clustering. We then focus on the case of two clusters and obtain a deterministic sufficient condition under which the planted clusters correspond to an optimal solution of the LP relaxation. In Section~\ref{sec:sbm}, we consider the K-means clustering problem with two clusters under the SBM. We first show that for dimension $m \geq 3$, in the continuum limit, the K-means clustering problem achieves optimal recovery threshold. Next, utilizing our deterministic condition of Section~\ref{sec:relax}, we obtain a recovery guarantee for the LP relaxation. We present our numerical experiments
in Section~\ref{sec:numerics}. Finally, proofs of the technical results omitted in Section~\ref{sec:sbm} are given in Section~\ref{sec:proofs}.

\section{The ratio-cut polytope}
\label{sec:ratioCutPoly}

In this section, we perform a polyhedral study of the convex hull of the feasible region of Problem~\eqref{Kmeans2}.
Denote by $\{x^i\}_{i=1}^n$ a set of points in $\R^m$ that we would like to put into at most $K$ clusters.
Consider a partition of $[n]$ denoted by $\{\Gamma_k\}_{k =1}^K$ where some of the partitions $\Gamma_k$ could be empty.
For any $1 \leq i < j \leq n$, define $X_{ij} = \frac{1}{|\Gamma_k|}$ if $i$ and $j$ belong to the same partition $\Gamma_k$, for some $k \in [K]$
and $X_{ij} = 0$ if $i$ and $j$ are in different partitions. Let $X$ be the $\binom{n}{2}$-vector whose elements are $X_{ij}$.
We refer to any such vector $X$ as a~\emph{ratio-cut vector}
and we refer to the convex hull in $\R^{\binom{n}{2}}$ of all ratio-cut vectors corresponding to \emph{at most} $K$ nonempty clusters as the~\emph{ratio-cut polytope}
and denote it by ${\rm RCut}^K_n$. Throughout the paper, we let $X_{ji} = X_{ij}$ whenever $i < j$.
If $K = 1$, then  ${\rm RCut}^K_n$ is given by
${\rm RCut}^1_n =\{X: X_{ij} = \frac{1}{n}, \; \forall 1 \leq i < j \leq n\}$. Moreover, it can be shown that  ${\rm RCut}^2_2 = \{X_{12}: 0 \leq X_{12} \leq 1/2\}$. Henceforth, we assume that
$n \geq 3$ and $2 \leq K \leq n$.

We denote by ${\rm RCut}^{=K}_n$ the convex hull of all ratio-cut vectors corresponding to \emph{exactly} $K$ nonempty clusters.
If $K = n$, it follows that ${\rm RCut}^{=n}_n = \{X: X_{ij} = 0, \; \forall 1 \leq i < j \leq n\}$
and if $K = n-1$, it can be shown that
$${\rm RCut}^{=n-1}_n = \Big\{X: \sum_{1 \leq i < j \leq n }{X_{ij}} =\frac{1}{2}, \; X_{ij} \geq 0, \; \forall 1 \leq i < j \leq n \Big\}.$$
Henceforth, when studying the facial structure of ${\rm RCut}^{=K}_n$, we assume that
$n \geq 4$ and $2 \leq K \leq n-2$. See Figure~\ref{figure5} for an illustration of ${\rm RCut}^{K}_n$ and ${\rm RCut}^{=K}_n$, with $n =3$ and $K= 2$.

\begin{figure}[htbp]
 \centering
 \epsfig{figure=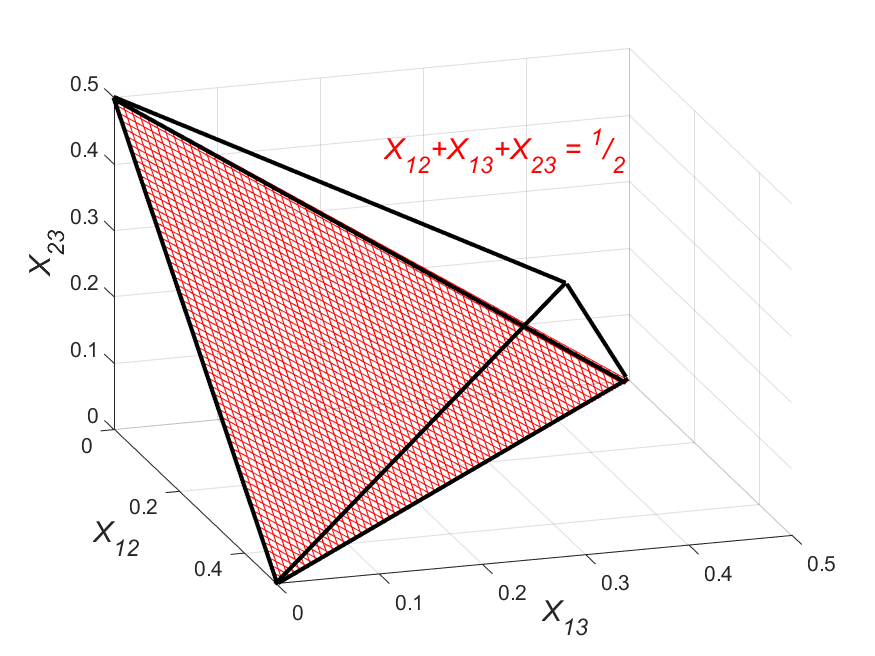, scale=0.5, trim=0mm 0mm 0mm 0mm, clip}
 \caption{The polytopes ${\rm RCut}^{K}_n$ and ${\rm RCut}^{=K}_n$, with $n =3$ and $K=2$. The ratio-cut polytope ${\rm RCut}^{K}_n$ is the convex hull of the ratio-cut vectors $\left\{(\frac{1}{3},\frac{1}{3},\frac{1}{3}), (\frac{1}{2},0,0), (0,\frac{1}{2},0), (0,0,\frac{1}{2})\right\}$. The polytope ${\rm RCut}^{=K}_n$, shaded in red, is a facet of ${\rm RCut}^{K}_n$ whose affine hall is given by
 $X_{12} + X_{13} + X_{23} = \frac{1}{2}$.}
\label{figure5}
\end{figure}

Consider the convex hull of the feasible region of Problem~\eqref{Kmeans2} denoted by $\Z^K_n$;
\ie the convex hull of all partition matrices $Z$.  It then follows that
\begin{equation}\label{projection}
{\rm RCut}^{=K}_n=\Big\{X \in \R^{\binom{n}{2}} | \; \exists Z \in \Z^K_n : X_{ij}=Z_{ij}, \forall 1 \leq i < j \leq n\Big\}.
\end{equation}
%
%
That is, the polytope ${\rm RCut}^{=K}_n$  is obtained by projecting out diagonal variables $Z_{ii}$, $i \in [n]$ from the convex hull of partition matrices.
Hence, understanding the facial structure of ${\rm RCut}^{=K}_n$ enables us to construct a strong LP relaxation for K-means clustering.
By definition, ${\rm RCut}^K_n \supset {\rm RCut}^{=K}_n$. However, we are interested in studying ${\rm RCut}^K_n$ due to its following
fundamental property.

\begin{proposition}\label{dimension}
The ratio-cut polytope ${\rm RCut}^K_n$ with $n \geq 3$ and $2 \leq K \leq n$ is full-dimensional; \ie $\dim({\rm RCut}^K_n) = \binom{n}{2}$.
\end{proposition}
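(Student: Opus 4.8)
The plan is to show that the only affine equation satisfied by every ratio-cut vector in ${\rm RCut}^K_n$ is the trivial one; equivalently, to exhibit $\binom n2+1$ affinely independent ratio-cut vectors. Concretely, I would assume that $\sum_{1\le i<j\le n} a_{ij}X_{ij}=b$ holds for all ratio-cut vectors $X$ of partitions into at most $K$ nonempty clusters, and aim to conclude $a_{ij}=0$ for all $i<j$ and $b=0$. It is enough to impose this equation on three families of partitions, each of which uses at most $2\le K$ nonempty parts and hence is feasible: (i) the singleton partitions $\big(\{p\},[n]\setminus\{p\}\big)$ for $p\in[n]$; (ii) the pair partitions $\big(\{p,q\},[n]\setminus\{p,q\}\big)$ for $1\le p<q\le n$; and (iii) the trivial one-cluster partition $\big([n]\big)$. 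The hypothesis $n\ge 3$ guarantees that the complements in (i) and (ii) are nonempty; when $n=3$ the complement in (ii) is a single point and the sums below that range over pairs inside it are empty.

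The deduction would proceed in three steps, with the degree sums $r_p:=\sum_{j\ne p}a_{pj}$ and total sum $A:=\sum_{i<j}a_{ij}$. First, the ratio-cut vector of $\big(\{p\},[n]\setminus\{p\}\big)$ has $X_{ij}=\tfrac{1}{n-1}$ for $i,j\ne p$ and $X_{ij}=0$ otherwise, so the equation reads $\tfrac{1}{n-1}(A-r_p)=b$; since this holds for every $p$, all the $r_p$ are equal, say to $\rho$. Second, the ratio-cut vector of $\big(\{p,q\},[n]\setminus\{p,q\}\big)$ has $X_{pq}=\tfrac12$, $X_{ij}=\tfrac{1}{n-2}$ for $i,j\notin\{p,q\}$, and $X_{ij}=0$ otherwise, so the equation reads $\tfrac12 a_{pq}+\tfrac{1}{n-2}\big(A-r_p-r_q+a_{pq}\big)=b$, using the inclusion–exclusion identity $\sum_{i,j\notin\{p,q\}}a_{ij}=A-r_p-r_q+a_{pq}$; after substituting $r_p=r_q=\rho$, the only quantity depending on $(p,q)$ is $a_{pq}$, which appears with the nonzero coefficient $\tfrac12+\tfrac{1}{n-2}$, so $a_{pq}$ is forced to a constant $\alpha$ independent of $p,q$. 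Third, once $a_{ij}\equiv\alpha$ the equation becomes $\alpha\sum_{i<j}X_{ij}=b$; but for any partition into $t$ nonempty clusters $\sum_{i<j}X_{ij}=\sum_k\binom{|\Gamma_k|}{2}/|\Gamma_k|=\tfrac{n-t}{2}$, so evaluating on a partition from (i) or (ii) (with $t=2$) gives $\alpha\tfrac{n-2}{2}=b$, while (iii) (with $t=1$) gives $\alpha\tfrac{n-1}{2}=b$; subtracting yields $\alpha=0$, hence $b=0$ and $a_{ij}=0$ for all $i<j$, \ie $\dim({\rm RCut}^K_n)=\binom n2$.

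I do not anticipate a genuine obstacle: the statement is essentially a nondegeneracy check that these low-complexity partitions already affinely span $\R^{\binom n2}$, and the three families were chosen so the argument is uniform over all $n\ge 3$ and $2\le K\le n$. The only points needing mild care are the inclusion–exclusion bookkeeping in step two (the edge $\{p,q\}$ is subtracted twice, hence added back once) and the degenerate reading when $n=3$, where the complement in (ii) is a singleton; there step two simply says $\tfrac12 a_{pq}=b$, which still delivers $a_{pq}\equiv\alpha$, and the remainder goes through unchanged. As a sanity check, the same counting shows $\sum_{i<j}X_{ij}$ is \emph{not} constant on ${\rm RCut}^K_n$ (it depends on the number of clusters), which is exactly what precludes a normal direction proportional to the all-ones vector — consistent with ${\rm RCut}^{=K}_n$, where fixing $t=K$ makes $\sum_{i<j}X_{ij}=\tfrac{n-K}{2}$ a valid equation.
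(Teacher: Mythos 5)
Your proof is correct, but it takes the dual route: rather than exhibiting $\binom{n}{2}+1$ affinely independent ratio-cut vectors, you show that any affine relation $\sum_{i<j} a_{ij}X_{ij}=b$ satisfied by all ratio-cut vectors must be trivial. The paper works primally: it reduces to ${\rm RCut}^2_n$ and lists the one-cluster vector together with the $\binom{n}{2}$ vectors from the $2$-versus-$(n-2)$ splits, asserting (without detailed verification) that these are affinely independent; this construction breaks at $n=4$, where the $2$-versus-$2$ splits collapse in pairs to only three distinct vectors, forcing a separate ad hoc case. Your argument avoids that bifurcation entirely: by first extracting from the singleton splits $(\{p\},[n]\setminus\{p\})$ that all degree sums $r_p$ are equal, the pair-split equation $\tfrac12 a_{pq}+\tfrac1{n-2}(A-r_p-r_q+a_{pq})=b$ pins $a_{pq}$ to a constant even when the complementary split yields the same ratio-cut vector, and the comparison of $\sum_{i<j}X_{ij}=(n-t)/2$ at $t=1$ versus $t=2$ kills that constant. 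The inclusion--exclusion identity and the degenerate $n=3$ reading are both handled correctly. What your approach buys is uniformity in $n$ and a fully explicit verification where the paper leaves ``it can be checked''; what the paper's approach buys is brevity and a concrete list of vertices that it can reuse implicitly elsewhere. Both rely on the same inventory of low-complexity partitions, so the underlying combinatorics is shared.
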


\begin{proof}
Since ${\rm RCut}^K_n \subset {\rm RCut}^{K'}_n$ for all $K' > K$, to prove the statement, it suffices to show that ${\rm RCut}^2_n$
contains $\binom{n}{2} + 1$ affinely independent points.

First let $n \neq 4$.
Consider the ratio-cut vector corresponding to one cluster; \ie $X_{ij} = \frac{1}{n}$ for all $1 \leq i < j \leq n$
and the $\binom{n}{2}$ ratio-cut vectors corresponding to two clusters of cardinality two and $n-2$; \ie for any $1 \leq r < s \leq n$,
consider the ratio-cut vector with $X_{r s} = \frac{1}{2}$,  $X_{ij} = \frac{1}{n-2}$ if $i \notin \{r, s\}$ and $j \notin \{r, s\}$,
and $X_{ij} = 0$, otherwise. It can be checked that these $\binom{n}{2} + 1$ ratio-cut vectors are affinely independent.

Now consider $n = 4$; in this case ${\rm RCut}^2_4$ is the convex hull of eight ratio-cut vectors and it can be checked that any seven of these vectors containing the ratio-cut vector corresponding to one cluster are affinely independent. Notice that we could not utilize the same construction as the one we used for $n \neq 4$ since due to symmetry, for $n = 4$, the set of all  ratio-cut vectors corresponding to two clusters of cardinality two and $n-2$ consists of $\frac{1}{2} \binom{n}{2} = 3$ affinely independent points.
\end{proof}

\subsection{Three types of facets of ${\rm RCut}^K_n$ and ${\rm RCut}^{=K}_n$}
In the next three propositions, we present various classes of facet-defining inequalities for ${\rm RCut}^K_n$ and ${\rm RCut}^{=K}_n$.
These results enable us to construct a strong LP relaxation for K-means clustering.

\begin{proposition}\label{firstfacet}
The inequality
\begin{equation}\label{trivialFacet}
\sum_{1\leq i < j \leq n} {X_{ij}} \geq \frac{n-K}{2}
\end{equation}
is valid for ${\rm RCut}^K_n$ and is facet-defining if and only if $K \leq n-1$.
Moreover, the affine hull of ${\rm RCut}^{=K}_n$ is defined by the equation
\begin{equation}\label{affineHull}
\sum_{1\leq i < j \leq n} {X_{ij}} = \frac{n-K}{2}.
\end{equation}
\end{proposition}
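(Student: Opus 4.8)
The plan is first to pin down the face of ${\rm RCut}^K_n$ cut out by \eqref{trivialFacet}, and then to extract the facet claim and the exact affine hull from a single dimension count. A direct size computation does the first part: if $X$ is the ratio-cut vector of a partition with $\ell\le K$ nonempty clusters of sizes $s_1,\dots,s_\ell$, the pairs inside the $k$-th cluster contribute $\binom{s_k}{2}\frac1{s_k}=\frac{s_k-1}{2}$, so $\sum_{1\le i<j\le n}X_{ij}=\sum_{k=1}^{\ell}\frac{s_k-1}{2}=\frac{n-\ell}{2}\ge\frac{n-K}{2}$, with equality iff $\ell=K$. Hence \eqref{trivialFacet} is valid for ${\rm RCut}^K_n$, the face it supports is the convex hull of the ratio-cut vectors having exactly $K$ nonempty clusters, i.e.\ ${\rm RCut}^{=K}_n$, and ${\rm RCut}^{=K}_n$ lies in the hyperplane \eqref{affineHull}. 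Since ${\rm RCut}^{=K}_n$ is a face of the full-dimensional polytope ${\rm RCut}^K_n$ (Proposition~\ref{dimension}) contained in that hyperplane, it will be a facet, with \eqref{affineHull} as its affine hull, as soon as ${\rm RCut}^{=K}_n$ is shown to contain $\binom n2$ affinely independent points; conversely, for $K=n$ the only ratio-cut vector with exactly $n$ nonempty clusters is $0$, so ${\rm RCut}^{=n}_n=\{0\}$ has dimension $0<\binom n2-1$ (using $n\ge3$), and \eqref{trivialFacet} is not facet-defining. This proves the ``only if'' direction and reduces the rest to producing, for every $2\le K\le n-1$, a set of $\binom n2$ affinely independent ratio-cut vectors each having exactly $K$ nonempty clusters.

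I would construct this set by cases on $K$. For $K=n-1$ the ratio-cut vectors with exactly $n-1$ nonempty clusters are exactly the $\binom n2$ vectors $\tfrac12 e_{pq}$, which are linearly independent. For $K=2$ and $n\neq4$ I would use, for each pair $\{p,q\}$, the partition with clusters $\{p,q\}$ and $[n]\setminus\{p,q\}$; writing the resulting vectors in the coordinate basis reduces their linear independence to invertibility of a matrix $\alpha I-\beta A$, where $A$ is the adjacency matrix of the Johnson graph on the $2$-subsets of $[n]$ (two subsets adjacent iff they intersect), whose eigenvalues are $2(n-2)$, $n-4$, $-2$; since the only $n\ge3$ for which $\tfrac{n-4}{2}$ lies in $\{2(n-2),\,n-4,\,-2\}$ is $n=4$, invertibility holds for all $n\ge3$, $n\neq4$, and the case $n=4,K=2$ is settled by exhibiting six explicit partitions into two clusters. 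For $3\le K\le n-2$ I would induct on $n$: the ratio-cut vectors of $[n-1]$ with exactly $K-1$ nonempty clusters, embedded into $\R^{\binom n2}$ by setting $X_{in}=0$ for all $i\in[n-1]$, furnish $\binom{n-1}2$ affinely independent vectors by the inductive hypothesis (the embedded instance is in range, as $2\le K-1\le n-2$), and for each $i\in[n-1]$ a partition with cluster $\{i,n\}$ together with an arbitrary splitting of $[n]\setminus\{i,n\}$ into $K-1$ nonempty clusters (possible because $K-1\le n-2$) contributes one more vector whose only nonzero entry among $X_{1n},\dots,X_{n-1,n}$ is $X_{in}=\tfrac12$; reading off those $n-1$ coordinates shows the whole collection of $\binom n2$ vectors is affinely independent. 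Since $(n,K)\mapsto(n-1,K-1)$ preserves $K-1\ge2$ and $K-1\le(n-1)-2$, the recursion terminates at the already-handled case $K=2$.

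The conceptual core — that the face of \eqref{trivialFacet} is exactly ${\rm RCut}^{=K}_n$, after which the facet and affine-hull claims become a dimension count — is immediate. I expect the real work to be the affine-independence verifications: carrying through the inductive step cleanly (keeping the reduced instance within the valid parameter range) and treating the genuinely degenerate case $n=4,K=2$, where the natural family of partitions collapses because each two-cluster partition of type $2+2$ would be counted twice, by a pair and by its complement.
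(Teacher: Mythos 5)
Your proposal is correct, but it proves facetness by a genuinely different route than the paper. You use the \emph{direct} method: identify the face induced by \eqref{trivialFacet} as ${\rm RCut}^{=K}_n$ (same validity computation as the paper), then exhibit $\binom{n}{2}$ affinely independent ratio-cut vectors with exactly $K$ nonempty clusters, handling $K=n-1$ explicitly, $K=2$ via an eigenvalue computation, and $3\le K\le n-2$ by induction on $n$ with a block-triangular affine-independence argument. The paper instead uses the \emph{indirect} method: it takes an arbitrary valid inequality $aX\ge\alpha$ tight on all binding ratio-cut vectors and, by swapping single points between clusters in pairs of binding partitions, derives $a_{ik}=a_{jk}$ for all distinct $i,j,k$, forcing $aX\ge\alpha$ to be a positive multiple of \eqref{trivialFacet}. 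Your approach has the advantage of producing the dimension of ${\rm RCut}^{=K}_n$ completely explicitly and of isolating the genuinely degenerate configuration $(n,K)=(4,2)$, at the cost of a case analysis and an induction; the paper's swap argument is shorter and uniform in $K$. Your treatment of the non-facet case $K=n$ (the face is the single point $0$, of dimension $0<\binom{n}{2}-1$) also differs from the paper's (which observes the inequality is implied by $X_{ij}\ge0$); both are valid. One small imprecision: for $K=2$ the matrix of your $\binom{n}{2}$ vectors is $\tfrac12 I+\tfrac1{n-2}A_{\rm Kneser}=\bigl(\tfrac12-\tfrac1{n-2}\bigr)I+\tfrac1{n-2}J-\tfrac1{n-2}A_{\rm Johnson}$, not literally $\alpha I-\beta A_{\rm Johnson}$; since all of these lie in the Bose--Mesner algebra of the Johnson scheme the spectral argument still applies, and the unique eigenvalue collision does occur exactly at $n=4$, so the conclusion stands.
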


\begin{proof}
Consider a partition of $[n]$ given by $\{\Gamma_k\}_{k=1}^K$; denote by $\K_1$ the subset of $[K]$ for which $|\Gamma_k| = 1$
and denote by $\K_2$ the subset of $[K]$ for which $|\Gamma_k| \geq 2$.
It then follows that
$$\sum_{1\leq i < j \leq n} {X_{ij}} = \sum_{k\in \K_2} {\frac{1}{|\Gamma_k|} \binom{|\Gamma_k|}{2}} = \frac{1}{2}\Big(\sum_{k\in \K_2} {|\Gamma_k|}-|\K_2|\Big) = \frac{n-|\K_1|-|\K_2|}{2} \geq \frac{n-K}{2},$$
where the last inequality holds with equality when $K = |\K_1| + |\K_2|$; \ie when all $\Gamma_k$, $k \in [K]$ are nonempty; that is,
we have exactly $K$ clusters.

Now let $K \leq n-1$ and consider a facet-defining inequality $a X \geq \alpha$
for ${\rm RCut}^K_n$ that is satisfied tightly by all ratio-cut vectors that are binding for inequality~\eqref{trivialFacet}.
We show that the two inequalities coincide up to a positive scaling which by full dimensionality of the ratio-cut polytope (see Proposition~\ref{dimension}), implies that inequality~\eqref{trivialFacet} defines a facet of ${\rm RCut}^K_n$.

Consider a partition of $[n]$ with $K$ nonempty clusters $\Gamma_k$, $k \in [K]$ with $\Gamma_1 = \{i\}$ for some $i \in [n]$.
Consider a second partition obtained by switching point $i$
with a point $j$ where $j \in \Gamma_r$ for some $|\Gamma_r| \geq 2$. Note
such a $\Gamma_r$ always exists since by assumption $K \leq n-1$.
Substituting the corresponding ratio-cut vectors in $a X = \alpha$ and subtracting the resulting equalities, we obtain:
\begin{equation}\label{aux1}
\sum_{l \in \Gamma_r \setminus \{i,j\}} {a_{jl}} = \sum_{l \in \Gamma_r \setminus \{i,j\}} {a_{il}}.
\end{equation}
Next, take the two partitions defined above and for each one, remove a point $k \neq i,j$ from  $\Gamma_r$ and place it in $\Gamma_1$.
Substituting the corresponding ratio-cut vectors in $a X = \alpha$  yields:
\begin{equation}\label{aux2}
\frac{a_{ik}}{2} + \frac{\sum_{l \in \Gamma_r \setminus \{i,j,k\}} {a_{jl}}}{|\Gamma_r|-1} = \frac{a_{jk}}{2} + \frac{\sum_{l \in \Gamma_r \setminus \{i,j, k\}} {a_{il}}}{|\Gamma_r|-1}.
\end{equation}
From~\eqref{aux1} and~\eqref{aux2} it follows that $a_{ik} = a_{jk}$ for all distinct $i,j, k \in [n]$. Moreover it can be checked that for any ratio-cut vector
associated to $K$ nonempty clusters we have $\sum_{1 \leq i < j \leq n} {X_{ij}} = (n-K)/2$. Hence, if $K \leq n-1$, the inequality $a X \geq \alpha$ coincides with~\eqref{trivialFacet}
up to a positive scaling implying that inequality~\eqref{trivialFacet} is facet-defining.

Now let $n = K$; in this case, the right-hand side of inequality~\eqref{trivialFacet}
is zero and hence is implied by valid inequalities $X_{ij} \geq 0$ for all $1 \leq i < j \leq n$. Therefore, in this case
inequality~\eqref{trivialFacet} is not facet-defining.

Finally, since the set of all ratio-cut vectors corresponding to $K$ nonempty clusters
constitute the set of tight points of the facet-defining inequality~\eqref{trivialFacet}, we conclude that $\dim({\rm RCut}^{=K}_n) = \binom{n}{2} -1 $ for all $K \leq n-1$ and its affine hull is
induced by $\sum_{1\leq i < j \leq n} {X_{ij}} = (n-K)/2$.
\end{proof}

Next, we present a class of facet-defining inequalities for the ratio-cut polytope that can be considered as the generalization
of the well-known triangle inequalities associated with the cut polytope~\cite{dl:97} (see Subsection~\ref{compareCut} for more detail).

\begin{proposition}\label{newf}
Let $l \in [n]$ and let $T$ be a nonempty subset of $[n] \setminus \{l\}$.  Then the inequality
\begin{equation}\label{facets}
2 \sum_{j \in T} {X_{lj}} + \sum_{j \in [n] \setminus T \cup \{l\}} {X_{lj}} \leq 1 + \sum_{i,j \in T: i < j} {X_{ij}},
\end{equation}
is valid for ${\rm RCut}^K_n$. Moreover, if $2 \leq |T| \leq K$, then inequality~\eqref{facets}  defines a facet of ${\rm RCut}^K_n$,
and if in addition, $K \leq n-2$ then this inequality defines a facet of ${\rm RCut}^{=K}_n$.
\end{proposition}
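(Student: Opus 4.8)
The plan is to first establish validity together with an exact description of the ratio-cut vectors that satisfy~\eqref{facets} with equality, and then run a standard ``exchange'' argument. For validity, fix a partition $\{\Gamma_k\}$, let $\Gamma$ be the part containing $l$, and set $r:=|\Gamma|$, $s:=|\Gamma\cap T|$ (note $r\ge s+1$ since $l\notin T$). Counting contributions to each sum gives $\sum_{j\in T}X_{lj}=s/r$ and $\sum_{j\in[n]\setminus(T\cup\{l\})}X_{lj}=(r-s-1)/r$, so the left-hand side of~\eqref{facets} equals $1+(s-1)/r$; on the other hand $\sum_{i<j,\,i,j\in T}X_{ij}=\sum_k\binom{|T\cap\Gamma_k|}{2}/|\Gamma_k|\ge\binom{s}{2}/r$, so the right-hand side is at least $1+s(s-1)/(2r)$. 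Hence~\eqref{facets} reduces to $(s-1)(s-2)\ge 0$, which holds for every integer $s\ge 0$, and equality forces both $(s-1)(s-2)=0$ and $|T\cap\Gamma_k|\le 1$ for every $k$ with $\Gamma_k\ne\Gamma$. In other words, the tight ratio-cut vectors are exactly those arising from partitions in which the part containing $l$ meets $T$ in one or two points and every other part meets $T$ in at most one point; when $2\le|T|\le K$ such partitions exist with at most $K$ parts, and when moreover $K\le n-2$ they exist with exactly $K$ parts (this is the only place the hypothesis $K\le n-2$ is used).

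Next, assume $2\le|T|\le K$; after relabeling we may take $l=1$ and $T=\{2,\dots,t\}$ with $t=|T|+1$. By Proposition~\ref{dimension}, ${\rm RCut}^K_n$ is full-dimensional, so it suffices to show that every valid inequality $aX\le\alpha$ that is tight on all ratio-cut vectors tight for~\eqref{facets} is a positive multiple of~\eqref{facets}. The argument is the usual one: pick a tight partition, produce a neighbouring tight partition by moving a single point from one part to another, substitute the two corresponding ratio-cut vectors into $aX=\alpha$, and subtract to eliminate $\alpha$ and extract a linear relation among the $a_{ij}$. Carrying this out systematically: comparing tight partitions that differ by breaking a part disjoint from $\{1\}$ into singletons or by shuffling singletons (available because $|T|\le K$) shows that, for $i,j\ne 1$, the coefficient $a_{ij}$ depends only on $|\{i,j\}\cap T|$, and in fact $a_{ij}=0$ whenever $i,j\ne 1$ and $|\{i,j\}\cap T|\le 1$, while $a_{ij}$ equals a common value $-\mu$ for $\{i,j\}\subseteq T$; then moving a point of $T$ in and out of the part containing $1$ (switching between the $s=1$ and $s=2$ regimes) and using the value of $\alpha$ forces $a_{1j}=2\mu$ for $j\in T$, $a_{1j}=\mu$ for $j\notin T\cup\{1\}$, and $\alpha=\mu$. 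Thus $aX\le\alpha$ coincides with $\mu$ times~\eqref{facets}, and $\mu>0$ because~\eqref{facets} is valid and $aX\le\alpha$ is nontrivial. Hence~\eqref{facets} defines a facet of ${\rm RCut}^K_n$.

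Finally, suppose in addition $K\le n-2$, and let $F$ denote the facet of $P:={\rm RCut}^K_n$ cut out by~\eqref{facets}. By Proposition~\ref{firstfacet}, $G:={\rm RCut}^{=K}_n$ is itself a facet of $P$ (with affine hull $\sum_{i<j}X_{ij}=(n-K)/2$), and $F\ne G$: a partition into exactly $K$ parts in which two points of $T$ lie in a common part not containing $l$ gives a point of $G$ that is not in $F$, and such a partition exists since $|T|\ge 2$ and $K\le n-1$. Consequently $F\cap G$ is a proper face of $G$, so $\dim(F\cap G)\le\binom{n}{2}-2$. For the matching lower bound one revisits the construction of the previous paragraph and observes that, because $K\le n-2$ leaves a spare point outside $T\cup\{l\}$ available to raise the part count, at least $\binom{n}{2}-1$ of the affinely independent tight ratio-cut vectors produced there can be realized by partitions with exactly $K$ nonempty parts; these lie in $F\cap G$, so $\dim(F\cap G)\ge\binom{n}{2}-2$. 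Therefore $\dim(F\cap G)=\binom{n}{2}-2=\dim G-1$, i.e.,~\eqref{facets} is facet-defining for ${\rm RCut}^{=K}_n$.

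The routine part is the direct counting in the first paragraph; the main obstacle is the bookkeeping in the second paragraph, namely organizing the sequence of single-point exchanges among the two families of tight partitions (the $s=1$ and $s=2$ cases) so that all $\binom{n}{2}$ coefficient relations and the value of $\alpha$ get pinned down, while making sure that every partition used along the way is genuinely tight for~\eqref{facets}. A lesser difficulty is checking, in the third paragraph, that enough of the affinely independent tight vectors can be realized with exactly $K$ parts; this is precisely where the hypothesis $K\le n-2$ is needed.
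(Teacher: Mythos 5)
Your proposal is correct and follows essentially the same route as the paper: the same counting for validity (with $s=|T\cap\Gamma|$, $r=|\Gamma|$), the same characterization of the tight ratio-cut vectors (the part containing $l$ meets $T$ in one or two points and every other part meets $T$ in at most one), the same indirect exchange argument driving the coefficients to $a_{lj}=2\mu$ on $T$, $a_{lj}=\mu$ off $T$, $a_{ij}=-\mu$ inside $T$, zero elsewhere, $\alpha=\mu$, and the same use of $K\le n-2$ to realize the binding partitions with exactly $K$ nonempty parts for ${\rm RCut}^{=K}_n$. The only differences are presentational: the paper executes the exchange bookkeeping explicitly through seven families of swaps, and settles the ${\rm RCut}^{=K}_n$ case by observing that all but one of those binding partitions already have $K$ nonempty clusters (so the indirect argument reruns inside the affine hull of ${\rm RCut}^{=K}_n$), rather than via your dimension count on $F\cap G$ — which as written leans on ``affinely independent tight vectors'' that your indirect second paragraph never actually exhibits, though this is easily repaired.
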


\begin{proof}
Denote by $\omega$ the number of points in $T$ that are in the same cluster as $l$ and let $\Omega$ denote the size of the corresponding cluster; clearly $\omega \geq 0$ and
$\Omega \geq 1$.
Then $\sum_{j \in T} {X_{lj}} = \frac{\omega}{\Omega}$ and $\sum_{j \in [n] \setminus T \cup \{l\}} {X_{lj}} = \frac{\Omega-\omega-1}{\Omega}$.
Moreover,  if $\omega \geq 2$, we have $\sum_{i,j \in T} {X_{ij}} \geq \binom{\omega}{2} \frac{1}{\Omega}$. Hence to show the
validity of inequality~\eqref{facets} it suffices to show that $1+ \frac{\omega-1}{\Omega} \leq 1$, if $\omega \in \{0,1\}$ and
 $1+ \frac{\omega-1}{\Omega} \leq 1+\frac{\omega(\omega-1)}{2\Omega}$, if $\omega \geq 2$, both of which are valid.

Let $K \in \{2, \ldots, n\}$; we now show that if $t:= |T| \in \{2, \ldots, \min\{n-1,K\}\}$,
then inequality~\eqref{facets} defines a facet of ${\rm RCut}^K_n$.
Denote by
\begin{equation}\label{samef}
a X \leq \alpha,
\end{equation}
a nontrivial valid inequality for ${\rm RCut}^K_n$ that is satisfied tightly by all ratio-cut vectors
that are binding for~\eqref{facets}. We show that inequalities~\eqref{facets} and~\eqref{samef} coincide up
to a positive scaling, which by full dimensionality of ${\rm RCut}^K_n$ (see Proposition~\ref{dimension})
implies that~\eqref{facets} defines a facet of ${\rm RCut}^K_n$.

Let $\Gamma_k$, $k \in [K]$ form a partition of $[n]$.
Assume that (i) $l \in \Gamma_1$, (ii) $|\Gamma_1 \cap T| = 1$ or $|\Gamma_1 \cap T| = 2$
and (iii) $|\Gamma_k \cap T| \leq 1$  for all $k \in [K] \setminus \{1\}$.
It can be checked that the ratio-cut vector corresponding to any partition satisfying conditions~(i)-(iii)
satisfies~\eqref{facets} tightly.
Note that all such partitions consist of at least $t$ nonempty clusters if $|\Gamma_1 \cap T| =1$ and at least $t-1$ nonempty clusters if
$|\Gamma_1 \cap T| = 2$, all of which correspond to valid ratio-cut vectors in  ${\rm RCut}^K_n$ since by assumption $2 \leq t \leq K$.
Henceforth, by a ``binding partition,'' we imply a partition of $[n]$ consisting of at most $K$ nonempty clusters, which
satisfies conditions~(i)-(iii) above.
In the following we present several types of binding partitions and by substituting the corresponding ratio-cut vectors
in $a X = \alpha$, we prove the facetness of inequality~\eqref{facets}.

Take some $r \in T$ and some $s \in [n] \setminus (T \cup \{l\})$. Consider a binding partition consisting of $q \leq K-1$ nonempty clusters
such that $\Gamma_2 = \{r,s\}$. Note that this binding partition exists since by assumption $t \geq 2$.
Moreover, such a partition with exactly $K-1$ nonempty
clusters exists if $K \leq n-1$. Now consider another binding partition consisting of  $q+1$ nonempty clusters,
obtained from the above partition by removing $s$ from $\Gamma_2$ and putting it in $\Gamma_{q+1}$. Substituting the ratio-cut vectors
in $a X = \alpha$ we obtain $a_{rs} = 0$.

For any $j \in [n] \setminus (T \cup \{l,s\})$, consider a binding partition such that $\Gamma_2 = \{r,s\}$ and
$\Gamma_3 = \{j\}$. Such a partition
with exactly $K$ nonempty clusters exists if $K \leq n-2$. Construct a second binding partition from the above partition
by swapping $j$ and $s$. Substituting the ratio-cut vectors
in $a X = \alpha$ and using $a_{rs} = 0$, we obtain $a_{rj} = 0$ for all $j \in [n] \setminus (T \cup \{l\})$.
Similarly, we obtain $a_{sj} = 0$ for all $j \in [n] \setminus (T \cup \{l,s\})$.

For any  $i \in T$  and for any $j \in [n] \setminus (T \cup \{l\})$, consider a binding partition
(consisting of $K$ nonempty clusters, if $K \leq n-2$)
such that $\Gamma_2 = \{r,j\}$ and $\Gamma_3 = \{i\}$.
Construct a second binding partition from the above partition
by swapping $r$ and $i$. Substituting the ratio-cut vectors
in $a X = \alpha$ and using $a_{rj} = 0$, we obtain
\begin{equation}\label{f1}
a_{ij} = 0, \quad \forall i \in T, \; j \in [n] \setminus (T \cup \{l\}).
\end{equation}

For any  $i, j \in [n] \setminus (T \cup \{l\})$, consider a binding partition
(consisting of $K$ nonempty clusters, if $K \leq n-2$)
such that $\Gamma_2 = \{s,j\}$ and $\Gamma_3 = \{i\}$.
Construct a second binding partition from the above partition
by swapping $s$ and $i$. Substituting the ratio-cut vectors
in $a X = \alpha$ and using $a_{sj} = 0$, we obtain
\begin{equation}\label{f2}
a_{ij} = 0, \quad \forall i, j \in [n] \setminus (T \cup \{l\}).
\end{equation}

For any $i, j \in T$, consider a binding partition
(consisting of $K$ nonempty clusters, if $K \leq n-1$)
with $\Gamma_1 = \{1,i\}$ and $\Gamma_2 = \{j\}$.
Construct a second binding partition from the above partition
by swapping $i$ and $j$. Substituting the ratio-cut vectors
in $a X = \alpha$, we obtain
\begin{equation}\label{f3}
a_{li} = \beta, \quad \forall i \in T.
\end{equation}

For any $i, j \in [n] \setminus (T \cup \{l\})$, consider a binding partition
(consisting of $K$ nonempty clusters, if $K \leq n-1$)
with $|\Gamma_1 \cap T| = 1$, $i \in \Gamma_1$, and $j \in \Gamma_2$.
Construct a second binding partition from the above partition
by swapping $i$ and $j$. Substituting the ratio-cut vectors
in $a X = \alpha$ and using~\eqref{f1}-\eqref{f3}, we obtain
\begin{equation}\label{f4}
a_{li} = \gamma, \quad \forall i \in [n] \setminus (T \cup \{l\}).
\end{equation}

For any $i, j, k \in T$, consider a binding partition
(consisting of $K$ nonempty clusters, if $K \leq n-2$)
with $\Gamma_1 = \{l,i,j\}$ and $\Gamma_2 = \{k\}$.
Construct a second binding partition from the above partition
by swapping $j$ and $k$. Substituting the ratio-cut vectors
in $a X = \alpha$ and using~\eqref{f3}, we obtain
\begin{equation}\label{f5}
a_{ij} = \zeta, \quad \forall i, j \in T.
\end{equation}

For any $i, j \in T$ and $k \in [n] \setminus (T \cup \{l\})$, consider a binding partition
(consisting of $K$ nonempty clusters, if $K \leq n-2$)
with $\Gamma_1 = \{l,i,j\}$ and $\Gamma_2 = \{k\}$.
Construct a second binding partition from the above partition
by swapping $j$ and $k$. Substituting the ratio-cut vectors
in $a X = \alpha$ and using~\eqref{f1}-~\eqref{f5}, we obtain
\begin{equation}\label{f6}
\beta +\zeta = \gamma .
\end{equation}

For any $i, j \in T$ and $k \in [n] \setminus (T \cup \{l\})$, consider a binding partition
(consisting of $K$ nonempty clusters, if $K \leq n-2$)
with $\Gamma_1 = \{l,i\}$ and $\Gamma_2 = \{j,k\}$.
Construct a second binding partition from the above partition
by adding $j$ to $\Gamma_1$. Substituting the ratio-cut vectors
in $a X = \alpha$ and using~\eqref{f1},~\eqref{f3}, and~\eqref{f5}, we obtain
\begin{equation}\label{f7}
\beta + 2 \zeta = 0.
\end{equation}

Consider a binding partition, consisting of $K$ nonempty clusters if $K \leq n-1$,
with $\Gamma_1 = \{l,i\}$ for some $i \in T$. Substituting
the corresponding ratio cut vector in $a X = \alpha$ and using~\eqref{f1}-\eqref{f3} yields $\beta = 2 \alpha$.
Together with~\eqref{f6} and~\eqref{f7}, this in turn implies that
inequality~\eqref{samef} can be equivalently written as
$\alpha (2 \sum_{j \in T} {X_{lj}} + \sum_{j \notin T \cup\{l\}} {X_{1j}} - \sum_{i,j \in T, i < j}{X_{ij}}) \leq \alpha$
where $\alpha > 0$ since by assumption, inequality~\eqref{samef} is nontrivial and inequality~\eqref{facets} is valid, and this concludes the proof of facetness for ${\rm RCut}^K_n$.

Finally, let us consider the polytope ${\rm RCut}^{=K}_n$ for $2 \leq K \leq n-2$.
In the above proof, with the exception of the first one, all of the binding partitions
consist of $K$ nonempty clusters and hence the corresponding ratio-cut vectors are present in ${\rm RCut}^{=K}_n$.
As ${\rm RCut}^{=K}_n$ is a facet of  ${\rm RCut}^K_n$, it follows that the inequality~\eqref{facets} defines a facet of ${\rm RCut}^{=K}_n$..
\end{proof}

Now consider the valid inequalities $X_{ij} \geq 0$ for all $1 \leq i < j \leq n$.
As we show in the following,
if $K \geq 3$, these inequalities define facets of the ratio-cut polytope.

\begin{proposition}\label{triv}
Let $K \leq n-1$. Then inequalities $X_{ij} \geq 0$ for all $1 \leq i < j \leq n$ define
facets of ${\rm RCut}^K_n$ and ${\rm RCut}^{=K}_n$ if and only if $K \geq 3$.
\end{proposition}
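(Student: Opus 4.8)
The plan is to mimic the structure of the proof of Proposition~\ref{newf}: fix an index pair, say $X_{12}$, exhibit enough affinely independent binding ratio-cut vectors with $X_{12}=0$, and then show that any valid inequality $aX\le\alpha$ tight on all of them must be a positive multiple of $X_{12}\ge 0$ (equivalently, after flipping signs, a nonnegative combination of $-X_{12}\le 0$ and the affine-hull equation~\eqref{affineHull}, in the case of ${\rm RCut}^{=K}_n$). By full-dimensionality (Proposition~\ref{dimension}) for ${\rm RCut}^K_n$, and by the fact that ${\rm RCut}^{=K}_n$ is the facet of ${\rm RCut}^K_n$ cut out by~\eqref{affineHull} (Proposition~\ref{firstfacet}), it suffices to produce, for each of the two polytopes, ratio-cut vectors with $X_{12}=0$ spanning an affine space of the correct dimension ($\binom{n}{2}-1$ for ${\rm RCut}^K_n$, $\binom{n}{2}-2$ for ${\rm RCut}^{=K}_n$), all of whose defining partitions place $1$ and $2$ in different clusters.

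First I would handle the ``only if'' direction. If $K=2$, then every ratio-cut vector of ${\rm RCut}^{=2}_n$ with $X_{12}=0$ has $1$ and $2$ in the two distinct clusters $\Gamma_1\ni 1$, $\Gamma_2\ni 2$ of sizes $\Omega_1,\Omega_2$ with $\Omega_1+\Omega_2=n$; one checks that such vectors satisfy the extra linear equation $\sum_{j\ne 1}X_{1j}+\sum_{j\ne 2}X_{2j}=2$ (each of the two sums equals $1$ since it records a full cluster), so $X_{12}=0$ is not a facet of ${\rm RCut}^{=2}_n$, and a similar degeneracy argument (now $1+\sum_{j\ne 1}X_{1j}\le\sum_{\text{rest}}$-type relations, together with the $K\le 1$ possibility) handles ${\rm RCut}^2_n$. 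So assume $K\ge 3$ (and $K\le n-1$).

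For the ``if'' direction, with $K\ge 3$, I would build binding partitions analogous to those in Proposition~\ref{newf}: put $1\in\Gamma_1$, $2\in\Gamma_2$, and distribute the remaining $n-2$ points among $\Gamma_1,\dots,\Gamma_K$ in ways that (a) keep at most $K$ (resp. exactly $K$) nonempty clusters and (b) sweep through enough configurations to force $a_{ij}$ to be constant off a controlled set. Concretely: (i) moving a single point between two clusters both different from $\{1\},\{2\}$ and comparing the two ratio-cut vectors should kill coefficients $a_{ij}$ with $i,j\notin\{1,2\}$; (ii) swapping points into and out of $\Gamma_1$ and $\Gamma_2$ should show $a_{1j}$ and $a_{2j}$ are constant in $j$ for $j\ge 3$; (iii) a couple of further swaps fix the relations among the constants $a_{1j},a_{2j}$ and $\alpha$, leaving only $a_{12}$ free — which, since $X_{12}=0$ is never violated, is unconstrained except for the overall scaling, giving exactly $aX\le\alpha$ proportional to $-X_{12}\le 0$ modulo the affine hull. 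The use of $K\ge 3$ is essential here: with three clusters available one can always park a point in a cluster disjoint from $\{1\}$ and $\{2\}$, which is precisely the freedom exploited in (i).

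The main obstacle I anticipate is the bookkeeping of small-$n$ or boundary cases — e.g. guaranteeing that a binding partition with \emph{exactly} $K$ nonempty clusters exists (needed for the ${\rm RCut}^{=K}_n$ claim, where $K\le n-2$ is used) while simultaneously satisfying the separation $1\in\Gamma_1$, $2\in\Gamma_2$ and leaving enough spare points for the swaps in steps (ii)–(iii). This is the same kind of ``counting the spare points'' care that appears throughout the proof of Proposition~\ref{newf}, and I would expect to spell out, for each swap argument, the exact inequality on $n$ and $K$ ($K\le n-1$ or $K\le n-2$) under which the required partition exists, exactly as done there.
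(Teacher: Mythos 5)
Your ``if'' direction ($K\ge 3$) follows essentially the same route as the paper: fix $X_{12}$, take a nontrivial valid inequality tight on all ratio-cut vectors that separate $1$ and $2$, and use swap arguments (made possible by the availability of a third cluster) to force every coefficient $a_{ij}$ with $\{i,j\}\neq\{1,2\}$ to vanish; the paper carries this out with essentially the partitions you describe, and the extension to ${\rm RCut}^{=K}_n$ is exactly your observation that all but one of the binding partitions can be taken with $K$ nonempty clusters. Your sketch stops at ``the coefficients are constant,'' whereas for the full-dimensional polytope you must push through to ``the coefficients are zero and $\alpha=0$,'' but that is a matter of execution, not of approach.

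The genuine gap is in the ``only if'' direction. The identity you assert for $K=2$, namely $\sum_{j\ne 1}X_{1j}+\sum_{j\ne 2}X_{2j}=2$ on the face $X_{12}=0$, is false: for a two-cluster partition with $1\in\Gamma_1$ and $|\Gamma_1|=\Omega_1$ one has $\sum_{j\ne 1}X_{1j}=(\Omega_1-1)/\Omega_1$, not $1$. The row sum equals $1$ only for the partition matrix $Z$, which includes the diagonal entry $Z_{11}=1/\Omega_1$; in the ratio-cut polytope that coordinate is projected out, so the sum varies with $\Omega_1$ (it is $0$ when $\Omega_1=1$ and $1/2$ when $\Omega_1=2$). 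A bare dimension count also does not rescue the argument for $n\ge 6$, since there are $2^{n-2}\ge\binom{n}{2}$ binding partitions. The paper instead proves redundancy directly: summing the valid inequalities $2(X_{1i}+X_{2i})+\sum_{j\notin\{1,2,i\}}X_{ij}\le 1+X_{12}$ over $i\in\{3,\dots,n\}$ (instances of Proposition~\ref{newf} with $T=\{1,2\}$ based at $l=i$, i.e.\ inequalities~\eqref{aux}) and adding $-2$ times~\eqref{trivialFacet} yields $-nX_{12}\le 0$, so $X_{12}\ge 0$ is a nonnegative combination of other valid inequalities and cannot define a facet. If you prefer to keep your ``extra linear equation'' strategy, the correct equations satisfied by every binding vector are precisely these inequalities at equality, $2(X_{1i}+X_{2i})+\sum_{j\notin\{1,2,i\}}X_{ij}=1$ for each $i\ge 3$, which you can check case by case according to whether $i\in\Gamma_1$ or $i\in\Gamma_2$.
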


\begin{proof}
We show without loss of generality that $X_{12} \geq 0$ defines a facet of ${\rm RCut}^K_n$ if and only if  $K \geq 3$.
First suppose that $K \geq 3$.
It then follows that $X_{12} \geq 0$ is binding at all ratio-cut vectors in which $1$ and $2$ are not in the same cluster. Let $a X \geq \alpha$
denote a nontrivial valid inequality for ${\rm RCut}^K_n$ that is binding at all ratio-cut vectors for which $X_{12} \geq 0$ is satisfied tightly.
We show that the two inequalities coincide up
to a positive scaling, which by full dimensionality of ${\rm RCut}^K_n$ implies that $X_{12} \geq 0$ defines a facet of ${\rm RCut}^K_n$.

Consider a partition of $[n]$ consisting of $K-1$ nonempty clusters such that $\Gamma_1 = \{1, n\}$.
Construct a second partition of $[n]$ consisting of $K$ nonempty clusters obtained from the partition defined above by removing the $n$th point from the first cluster and putting it in the $K$the cluster.
The corresponding ratio-cut vectors are binding and hence substituting in $a X = \alpha$
we obtain $a_{1n} = 0$.

Next for any $j \in [n] \setminus \{1,2,n\}$ consider two partitions of $[n]$ consisting of $K$ nonempty clusters corresponding to binding ratio-cut vectors, where in the first partition we have $\Gamma_1 = \{1, n\}$, $\Gamma_2 = \{j\}$ and $\cup_{k=3}^{K}{\Gamma_k} =  [n] \setminus \{1,j,n\}$.
The second partition is obtained from the first one by only modifying the first and second clusters as follows: $\Gamma_1 = \{1,j\}$,
$\Gamma_2 = \{n\}$. Substituting the ratio-cut vectors in $a X = \alpha$ and using $a_{1n} = 0$ yields:
\begin{equation}\label{triv3}
a_{1j} = 0, \quad \forall j \in [n]\setminus \{1,2\}.
\end{equation}
Similarly, for any $j \in [n] \setminus \{1,2\}$ and for any $k \in [n] \setminus \{1,j\}$,
consider two partitions of $[n]$ consisting of $K$ nonempty clusters both of which correspond to binding ratio-cut vectors, defined as follows: in the first partition we have  $\Gamma_1 = \{1, j\}$, $\Gamma_2 = \{k\}$, and $\cup_{k=3}^{K}{\Gamma_k} =  [n] \setminus \{1,j,k\}$.
The second partition is obtained from the first one by only changing the first and second clusters as follows: $\Gamma_1 = \{1\}$,
$\Gamma_2 = \{j, k\}$.
Substituting the ratio-cut vectors in $a X = \alpha$ and using~\eqref{triv3} yields
$$
a_{jk} = 0, \quad \forall 2 \leq i < j \leq n.
$$
Hence, since by assumption $a X \geq \alpha$ is nontrivial and valid for ${\rm RCut}^K_n$
we conclude that it can be equivalently written as $a_{12} X_{12} \geq 0$
where $a_{12} > 0$, implying that if $K \geq 3$ the inequality $X_{12} \geq 0$ defines a facet of ${\rm RCut}^K_n$.

Now suppose that $K = 2$. We show that $X_{12} \geq 0$ is implied by a collection of inequalities all of which are valid for ${\rm RCut}^2_n$ indicating that it is not facet-defining. Consider the following inequalities
\begin{equation}\label{aux}
2(X_{1i} + X_{2i}) + \sum_{j\in [n] \setminus \{1,2,i\}}{X_{ij}} \leq 1 + X_{12},  \quad \forall i \in \{3,\ldots, n\}.
\end{equation}
By letting $S = \{1,2,i\}$ for each $i \in \{3, \ldots, n\}$, from Proposition~\ref{newf} it follows that
inequalities~\eqref{aux} are valid for ${\rm RCut}^2_n$.
Moreover, take inequality $\sum_{1 \leq i < j \leq n} {X_{ij}} \geq (n-2)/2$ whose validity follows from Proposition~\ref{firstfacet}; multiplying this inequality by $-2$ and adding the result to the inequality obtained by summing up all inequalities~\eqref{aux}, we get $-n X_{12} \leq 0$. Hence $X_{ij} \geq 0$ does not define a facet of ${\rm RCut}^2_n$.

Finally, let us consider the polytope ${\rm RCut}^{=K}_n$ for $3 \leq K \leq n-1$.
In the above proof, with the exception of the first one, all of the defined ratio-cut vectors binding for $X_{ij} \geq 0$
correspond to $K$ nonempty clusters and hence are present in ${\rm RCut}^{=K}_n$.
As ${\rm RCut}^{=K}_n$ is a facet of  ${\rm RCut}^K_n$, it follows that the inequality $X_{ij} \geq 0$ defines a facet of ${\rm RCut}^{=K}_n$ as well.
\end{proof}

\subsection{Connections with the cut polytope}
\label{compareCut}

The cut polytope, a celebrated polytope in combinatorial optimization, is the convex hull of all cut vectors of a graph
(see~\cite{dl:97} for an exposition). This polytope corresponds to the convex hull of the feasible region of the max-cut problem.
Consider a graph with $n$ nodes; we define a~\emph{cut vector} $Y$ associated with a graph cut as follows: for any $1 \leq i < j \leq n$, let
$Y_{ij} = 1$ if $i$ and $j$ are in the same partition and let $Y_{ij} = 0$, otherwise. The cut polytope ${\rm Cut}_n$ is then defined
as the convex hull of all cut vectors. In this section, we describe some key similarities and differences between the cut polytope and the ratio-cut polytope.

The facial structure of the cut polytope has been extensively studied and numerous classes of facet-defining inequalities have been identified
for this polytope. The most well-known class of facet-defining inequalities for ${\rm Cut}_n$ are \emph{triangle inequalities} given by
\begin{equation}\label{tri2}
Y_{ij} + Y_{ik} \leq 1 + Y_{jk},
\end{equation}
and
\begin{equation}\label{tri1}
Y_{ij} + Y_{ik} + Y_{jk} \geq 1,
\end{equation}
for all distinct $i, j, k \in [n]$ (see for example~\cite{dl:97}).

Now let $l \in [n]$, and let $T \subseteq [n] \setminus \{l\}$ with $ 2\leq |T| \leq K$; define $X_{ll} := 1 - \sum_{j \in [n] \setminus \{l\}} {X_{lj}}$. Then inequality~\eqref{facets} can be equivalently written as
\begin{equation}\label{gtri}
\sum_{j \in T} {X_{lj}} \leq X_{ll} + \sum_{i,j \in T : i < j}{X_{ij}}.
\end{equation}
By Proposition~\ref{newf}, inequalities of the form~\eqref{gtri} define facets of ${\rm RCut}^{K}_n$. Let $K = 2$; then all inequalities
of the form~\eqref{gtri} can be written as:
\begin{equation}\label{gtriK2}
X_{ij} + X_{ik} \leq X_{ii} + X_{jk}, \quad \forall \; {\rm distinct} \quad i,j,k \in [n],
\end{equation}
where as before $X_{ii} := 1 - \sum_{j \in [n] \setminus \{i\}} {X_{ij}}$.
Hence, inequalities~\eqref{gtriK2} (and more generally, inequalities~\eqref{gtri}) for the ratio-cut polytope can be considered as an equivalent of inequalities~\eqref{tri2} for the cut polytope.
Moreover, the fact-defining inequality~\eqref{trivialFacet} can be considered as an equivalent of
inequalities~\eqref{tri1} for the cut polytope. We further detail on some parallels between ${\rm Cut}_n$ and  ${\rm RCut}^{K}_n$ in the next section as we introduce a relaxation of the ratio-cut polytope.

In spite of certain similarities, it is important to note that there are fundamental differences between the cut polytope and the ratio-cut polytope. For example, unlike the cut polytope, the ratio-cut polytope does not have the so-called ``zero-lifting'' property (see Section~26.5 of~\cite{dl:97} for the definition of zero-lifting). Roughly speaking, the zero-lifting property implies that if an inequality defines a facet of
${\rm Cut}_n$ then it also defines a facet of ${\rm Cut}_{n'}$ for all $n' > n$.
Indeed, the lack of this property in case of inequalities~\eqref{gtri} is hidden in the definition of $X_{ii}$.

\subsection{A polyhedral relaxation of the ratio-cut polytope}
We now define a polyhedral relaxation of ${\rm RCut}^K_n$ defined by all facet-defining inequalities given by Propositions~\ref{firstfacet}, \ref{newf}, and~\ref{triv}.
We denote this relaxation by ${\rm RMet}^K_n$ due to its similarities to the \emph{metric polytope}.
The metric polytope defined by triangle inequalities~\eqref{tri2} and~\eqref{tri1} is a widely used relaxation of the cut polytope.
Similarly, we define a polyhedral relaxation of ${\rm RCut}^{=K}_n$ defined by equality~\eqref{affineHull}
together with facet-defining inequalities given by Propositions~\ref{newf}, and~\ref{triv}.
We denote this relaxation by ${\rm RMet}^{=K}_n$. Notice that ${\rm RMet}^K_n$ is defined by a collection of inequalities all of which are facet-defining for ${\rm RCut}^K_n$ and ${\rm RMet}^{=K}_n$ is the restriction of ${\rm RMet}^K_n$ to one of such inequalities. It then follows that ${\rm RMet}^{=K}_n$ is a facet of ${\rm RMet}^K_n$.

It is well-known that the metric polytope coincides with the cut polytope if and only if $n \leq 4$~\cite{dl:97}. The following demonstrates an analogous relation between
${\rm RCut}^K_n$ and ${\rm RMet}^K_n$.
\begin{proposition}\label{tightness}
${\rm RCut}^K_n = {\rm RMet}^K_n$ if and only if $n \leq 4$.
\end{proposition}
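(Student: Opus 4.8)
The plan is to establish the two directions separately. For the "if" direction (assuming $n \leq 4$), since $n \geq 3$ and $2 \leq K \leq n$, the only cases to check are $(n,K) \in \{(3,2),(3,3),(4,2),(4,3),(4,4)\}$. For the degenerate cases $K = n$ and $K = n-1$ we already have explicit descriptions of ${\rm RCut}^K_n$ from the introductory discussion, and one checks directly that the facet-defining inequalities of Propositions~\ref{firstfacet}, \ref{newf}, and~\ref{triv} suffice to describe them. The genuinely substantive small cases are $(3,2)$ and $(4,2)$, and also $(4,3)$. For $n = 3$, $K = 2$: the polytope ${\rm RCut}^2_3$ is the convex hull of the four vectors listed in Figure~\ref{figure5}, and one verifies that the inequalities $\sum X_{ij} \geq 1/2$ (from Proposition~\ref{firstfacet}) together with $X_{ij} \geq 0$ cut out exactly this simplex — note $|T| \leq K = 2$ forces $|T| = 2$ in Proposition~\ref{newf}, and one checks those triangle-type inequalities are implied. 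For $(4,2)$ and $(4,3)$ one can either enumerate the vertices and run a (small) direct polyhedral verification, or argue that every vertex of ${\rm RMet}^K_n$ is integral-feasible in the appropriate sense; I would present the $n=4$ verification as a finite computation, listing the vertex set of ${\rm RMet}^2_4$ and confirming each is a ratio-cut vector.

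For the "only if" direction (assuming $n \geq 5$), I must exhibit a point of ${\rm RMet}^K_n$ that is not in ${\rm RCut}^K_n$. The natural strategy mirrors the classical fact that the metric polytope strictly contains the cut polytope for $n \geq 5$: there the separating point comes from the pentagonal (hypermetric) inequalities, which are valid for ${\rm Cut}_n$ but not implied by triangle inequalities alone. Here I would look for the analogous phenomenon: a ratio-cut analogue of a hypermetric inequality that is valid for ${\rm RCut}^K_n$ but is violated by some vertex of ${\rm RMet}^K_n$. Concretely, I expect to construct a fractional point $X^\ast$ supported on five indices that satisfies all inequalities~\eqref{trivialFacet}, \eqref{facets}, and $X_{ij} \geq 0$, yet violates a valid "pentagonal-type" inequality for the ratio-cut polytope — or, more directly, one shows $X^\ast \notin {\rm RCut}^K_n$ by a separation/LP-duality argument exhibiting an objective $\sum d_{ij} X_{ij}$ whose minimum over ${\rm RCut}^K_n$ strictly exceeds $\sum d_{ij} X^\ast_{ij}$. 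For the general $K \geq 2$ case, such a point can be embedded by keeping the remaining coordinates "generic enough" (e.g.\ mimicking a near-partition on $[n]\setminus\{$five special points$\}$) so that the extra constraints from larger $T$ remain slack.

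The main obstacle is the "only if" direction: unlike the cut polytope, ${\rm RCut}^K_n$ lacks the zero-lifting property (as noted in Subsection~\ref{compareCut}), so one cannot simply import the pentagonal inequality from a smaller instance and pad with zeros — the definition of $X_{ii}$ interferes. This means the separating fractional point must be constructed directly in dimension $\binom{n}{2}$, and one must verify by hand that it lies inside ${\rm RMet}^K_n$ (finitely many inequality classes, but each parametrized by $(l,T)$) while certifying non-membership in ${\rm RCut}^K_n$. I would handle the certification of non-membership via an explicit dual (Farkas) certificate: produce nonnegative multipliers on a set of valid inequalities for ${\rm RCut}^K_n$ — ideally a single valid inequality not in the list of Propositions~\ref{firstfacet}--\ref{triv} — whose combination is violated at $X^\ast$. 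Finding the right such inequality and the right $X^\ast$ simultaneously is the crux; once they are pinned down, checking membership in ${\rm RMet}^K_n$ and the embedding into larger $K$ are routine.
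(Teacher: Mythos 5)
Your overall strategy coincides with the paper's: the $n\leq 4$ direction is handled by finite enumeration in both cases, and for $n\geq 5$ both arguments seek a valid inequality for ${\rm RCut}^K_n$ together with an explicit point of ${\rm RMet}^K_n$ that violates it. However, the proposal has a genuine gap precisely at what you yourself call ``the crux'': you never exhibit the separating inequality or the point $X^\ast$, and without these two objects the only-if direction is simply not proved. The missing content is not routine. The paper's certificate is, for a pair $i,j$ and a set $T\subseteq [n]\setminus\{i,j\}$ with $|T|\geq 2$, the inequality
\[
\sum_{k\in T}\bigl(X_{ik}+X_{jk}\bigr)-X_{ij}\;\le\; X_{ii}+X_{jj}+\sum_{k<l\in T}X_{kl},
\]
where $X_{ii}:=1-\sum_{k\neq i}X_{ik}$; proving its validity for ${\rm RCut}^K_n$ requires a two-case analysis (whether $i$ and $j$ share a cluster or not) using counting bounds of the form $\sum_{k<l\in T}X_{kl}\geq\binom{\omega}{2}\frac{1}{\Omega}$. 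Your guess that a ``pentagonal-type'' inequality on five indices is the right certificate is correct in spirit (taking $|T|=3$ gives exactly five distinguished points), but guessing the shape is not the same as producing and verifying the inequality.

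The construction of the violating point is also more delicate than your ``once they are pinned down, checking membership is routine.'' For $n=5$ the paper takes $\bar X_{12}=0$, $\bar X_{1k}=\bar X_{2k}=\tfrac{5}{24}$ for $k\in\{3,4,5\}$ and $\bar X_{kl}=\tfrac{2}{24}$ for $k<l\in\{3,4,5\}$, and verifies all inequalities of Propositions~\ref{firstfacet}--\ref{triv} for every $K\geq 2$. For $n>5$, the lack of zero-lifting that you correctly flag means the remaining $\binom{n}{2}-10$ coordinates cannot be set to zero or merely ``generic enough'': they enter the definition of every $X_{ii}$ and the left-hand side of~\eqref{affineHull}/\eqref{trivialFacet}, so they must be assigned explicit $n$-dependent rational values (the paper uses three such parameters) chosen so that $\sum_{i<j}\bar X_{ij}=\tfrac{n-2}{2}$ and all inequalities~\eqref{facets} hold while the five-point violation is preserved. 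In short: right skeleton, same route as the paper, but the two concrete objects that constitute the proof of the only-if direction are absent, so as it stands the proposal is an outline rather than a proof.
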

\begin{proof}
If $n = 3$ and $K = \{2,3\}$ or if $n = 4$ and $K \in \{2,3,4\}$, it can be checked, by direct calculation that the ratio-cut vectors constitute all vertices of ${\rm RMet}^K_n$, implying ${\rm RCut}^K_n = {\rm RMet}^K_n$.

Now let $n \geq 5$ and consider some $K \in \{2, \ldots, n\}$.
To show that ${\rm RCut}^K_n \subset {\rm RMet}^K_n$, we present a point $\bar X$ such that $\bar X \in {\rm RMet}^K_n$
and $\bar X \notin {\rm RCut}^K_n$. To prove the latter, we present an inequality that is valid for ${\rm RCut}^K_n$
but is violated by $\bar X$.

We first give the valid inequality for the ratio-cut polytope. Consider a pair $i,j \in [n]$
and let $T \subseteq [n] \setminus \{i,j\}$  with $|T| \geq 2$.
Let $X_{ii} = 1-\sum_{k\in [n] \setminus \{i\}}{X_{ik}}$
and suppose that $X_{jj}$ is similarly defined.
We first show that the inequality
\begin{equation}\label{valid}
\sum_{k \in T}{(X_{ik} +X_{jk})} - X_{ij} \leq X_{ii} + X_{jj} + \sum_{k <l \in T} {X_{kl}}
\end{equation}
is valid for ${\rm RCut}^K_n$. Two cases arise:
\begin{itemize}[leftmargin=*]
\item [(i)] $i$ and $j$ are in the same cluster of size $\Omega$.
Then $X_{ii} = X_{jj} = X_{ij} = \frac{1}{\Omega}$. Denote by $\omega$ the number of points in $T$ that belong to the same cluster
as $i$ and $j$. Then $\sum_{k \in T}{(X_{ik} +X_{jk})} = \frac{2\omega}{\Omega}$. Moreover, if $\omega \geq 2$, we have
$\sum_{k <l \in T} {X_{kl}} \geq \binom{\omega}{2}\frac{1}{\Omega}$. Hence if $\omega \leq 1$, it suffices to show that
$\frac{2\omega}{\Omega} - \frac{1}{\Omega} \leq \frac{1}{\Omega} + \frac{1}{\Omega}$ and if $\omega \geq 2$, it suffices to show that
$\frac{2\omega}{\Omega} - \frac{1}{\Omega} \leq \frac{1}{\Omega} + \frac{1}{\Omega} + \binom{\omega}{2}\frac{1}{\Omega}$, both of which are valid.

\item [(ii)] $i$ and $j$ are in two distinct clusters of size $\Omega_1$ and $\Omega_2$, respectively.
Then $X_{ii} = \frac{1}{\Omega_1}$, $X_{jj} = \frac{1}{\Omega_2}$ and $X_{ij} = 0$.
Denote by $\omega_1$ (resp. $\omega_2$) the number of points in $T$ that are in the same cluster with $i$ (resp. $j$).
Then $\sum_{k \in T}{(X_{ik} +X_{jk})} = \frac{\omega_1}{\Omega_1} +  \frac{\omega_2}{\Omega_2}$.
Let $q_1 = \binom{\omega_1}{2}$ (resp. $q_2 = \binom{\omega_2}{2}$),
if $\omega_1 \geq 2$ (resp. $\omega_2 \geq 2$) and let $q_1 = 0$ (resp. $q_2 = 0$) , otherwise.
Then it suffices to show that
$\frac{\omega_1}{\Omega_1} +  \frac{\omega_2}{\Omega_2} \leq \frac{1}{\Omega_1} + \frac{1}{\Omega_2} + \frac{q_1}{{\Omega_1}}
+ \frac{q_2}{{\Omega_2}}$.
The validity of this statement follows from the fact that $\omega_1 \leq 1 + q_1$ and $\omega_2\leq 1 + q_2$.

\end{itemize}

We now present a point $\bar X \in {\rm RMet}^K_n$ that does not satisfy inequality~\eqref{valid}.
Suppose that $|T| = 3$; for notational simplicity, let $\{i,j\} = \{1,2\}$ and $T = \{3,4,5\}$.
Two cases arise:
\begin{itemize}[leftmargin=*]
\item [(i)] $n =5$: in this case, let
\begin{eqnarray*}
  \bar X_{12} &=& 0 \\
  \bar X_{1k} =\bar X_{2k} &=& \alpha = \frac{5}{24}, \quad k \in \{3,4,5\}\\
  \bar X_{kl} &=& \omega = \frac{2}{24} , \quad k < l \in \{3,4,5\}
\end{eqnarray*}
We now show that $\bar X \in {\rm RMet}^K_5$.  We have
$\sum_{1 \leq i < j \leq 5} {\bar X_{ij}} = 6  \alpha + 3 \omega = \frac{3}{2} \geq \frac{5-K}{2}$,
where the last inequality is valid for any $K \geq 2$; hence, inequality~\eqref{trivialFacet} is satisfied at $\bar X$.
It remains to show the validity of inequalities~\eqref{gtri} for $ 2 \leq |T| \leq \min\{K, 4\}$
and for $2\leq K \leq 5$. We have $\bar X_{11} = \bar X_{22} = \beta = \frac{9}{24}$
and $\bar X_{kk} = \gamma = \frac{10}{24}$ for $k \in \{3,4,5\}$. Then for $|T| = 2$, it suffices to have
$2 \alpha \leq \beta + \omega$, $2 \alpha \leq \gamma$, $\omega \leq \gamma$,
all of which are valid. For $|T| = 3$, it suffices to have $3 \alpha \leq \beta + 3 \omega$ which is satisifed
and all inequalities corresponding to $|T| = 4$ are implied by the above inequalities. Thus, we conclude that $\bar X \in {\rm RMet}^K_5$.
Substituting $\bar X$ in inequality~\eqref{valid}
yields $6 \alpha - 0 \leq 2 \beta + 3 \omega$,
which simplifies to $\frac{30}{24} \leq \frac{18}{24}+ \frac{6}{24}$ and is clearly not valid.

\item [(ii)] $n > 5$: in this case, let
\begin{eqnarray*}
  \bar X_{12} &=& 0 \\
  \bar X_{1k} =\bar X_{2k} &=&  \alpha = \frac{n}{3(3n-7)}, \quad k \in \{3,4,5\}\\
  \bar X_{kl} &=& \omega = \frac{n}{6(3n-7)} , \quad k < l \in \{3,4,5\}\\
  \bar X_{1k} =\bar X_{2k} &=&  \eta_1 = \frac{3n-14}{2(3n-7)(n-5)}, \quad k \in \{6,\ldots,n\}\\
   \bar X_{kl}& = & \eta_2 = \frac{4n-21}{3(3n-7)(n-5)}, \quad   k \in \{3,4,5\}, \; l \in \{6,\ldots,n\} \\
   \bar X_{kl} & = & \eta_3 = \frac{3n-14}{(3n-7)(n-5)}, \quad k < l \in \{6,\ldots,n\}.
\end{eqnarray*}
We now show that $\bar X \in {\rm RMet}^K_n$. First, $\bar X_{ij} \geq 0$ for all $1 \leq i < j \leq n$.
Moreover,
$\sum_{1 \leq i < j \leq n} {\bar X_{ij}} = 6 \alpha + 3 \omega  + 2(n-5) \eta_1 + 3(n-5) \eta_2 +\frac{(n-5)(n-6)}{2} \eta_3
= \frac{2 n}{3n-7} + \frac{n}{2(3n-7)}  + \frac{3n-14}{3n-7} +\frac{4n-21}{3n-7} +\frac{(n-6)}{2}  \frac{3n-14}{3n-7} = \frac{n-2}{2} \geq \frac{n-K}{2}$, implying inequality~\eqref{trivialFacet} is satisfied.
We now establish the validity of inequalities~\eqref{gtri} at $\bar X$, for $ 2 \leq |T| \leq K$.
It can be checked that $\bar X_{11} = \bar X_{22} = \beta= \frac{n}{2(3n-7)}$ and
$\bar X_{kk} = \gamma= \frac{2 n}{3(3n-7)}$ for $k \in \{3, 4, 5\}$.
From the construction of $\bar X$, it follows that the inequalities of the form~\eqref{gtri} not implied by the rest are the following:
$2 \alpha \leq \beta + \omega$, $2 \alpha \leq \gamma$, $\omega \leq \gamma$,
$3 \alpha \leq \beta + 3 \omega$ all of which are satisfied by $\bar X$.
Hence, $\bar X \in {\rm RMet}^K_n$.
Finally, substituting $\bar X$ in inequality~\eqref{valid}
yields $6 \frac{n}{3(3n-7)} - 0 \leq 2\frac{n}{2(3n-7)}+ 3 \frac{n}{6(3n-7)}$,
which simplifies to $2 n \leq n+ \frac{n}{2}$ and is clearly not valid.
\end{itemize}

\end{proof}

While for $n \geq 5$, the polytopes ${\rm RMet}^K_n$ and  ${\rm RCut}^K_n$ do not coincide,
next, we show that every ratio-cut vector is a vertex of ${\rm RMet}^K_n$.

\begin{proposition}
Every ratio-cut vector is a vertex of ${\rm RMet}^K_n$ and every ratio-cut vector corresponding to $K$ nonempty clusters is
a vertex of ${\rm RMet}^{=K}_n$.
\end{proposition}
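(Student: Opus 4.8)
The plan is to prove, for a ratio-cut vector $X^\ast$ whose underlying partition into nonempty clusters is $\Gamma_1,\dots,\Gamma_p$ with $1\le p\le K$, that the coefficient vectors of the inequalities defining ${\rm RMet}^K_n$ that are active at $X^\ast$ span $\R^{\binom n2}$; since ${\rm RMet}^K_n$ is described purely by inequalities, this is exactly the statement that $X^\ast$ is a vertex. I would split into the generic case $p\ge 2$ (at least two clusters) and the degenerate case $p=1$ (the uniform point $X^\ast=\tfrac1n\mathbf 1$), and I would record throughout that every inequality I use has $|T|=2\le K$, so it really is among the facet-defining inequalities from Proposition~\ref{newf} included in ${\rm RMet}^K_n$.

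For $p\ge 2$: first I would produce the "within-cluster'' coordinates $X_{ij}$, $i,j\in\Gamma_k$. For a cluster $\Gamma_k$ with $|\Gamma_k|\ge 2$, a vertex $l\in\Gamma_k$, a point $j\in\Gamma_k\setminus\{l\}$, and any fixed $s\notin\Gamma_k$ (which exists because $p\ge 2$), inequality~\eqref{facets} with this $l$ and $T=\{j,s\}$ is tight at $X^\ast$ by a direct computation using $X^\ast_{ab}=1/|\Gamma_k|$ inside clusters and $X^\ast_{ab}=0$ across; modulo the "cross'' coordinates $X_{uv}$ with $u,v$ in different clusters, its coefficient vector reduces to $2e_{lj}+\sum_{m\in\Gamma_k\setminus\{l,j\}}e_{lm}$, and for fixed $l$, summing over $j\in\Gamma_k\setminus\{l\}$ shows $\sum_{m\in\Gamma_k\setminus\{l\}}e_{lm}$, hence each $e_{lj}$, lies in their span; ranging over $l$ and $k$, these reduced vectors span all within-cluster coordinates. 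It then remains to put every cross unit vector $e_{uv}$ into the span of active rows. If $K\ge 3$ this is immediate, since $X_{uv}\ge 0$ is a facet (Proposition~\ref{triv}) active at $X^\ast$. If $K=2$ (so $p=2$), I would instead reuse the argument in the proof of Proposition~\ref{triv}: summing the active inequalities~\eqref{facets} with $T=\{u,v\}$ over all $l\in[n]\setminus\{u,v\}$ and subtracting twice the active inequality~\eqref{trivialFacet} yields $-n\,X_{uv}\le 0$, exhibiting $e_{uv}$ as a linear combination of active rows. Either way the active rows span $\R^{\binom n2}$.

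For $p=1$, where $X^\ast=\tfrac1n\mathbf 1$: there are no cross coordinates, and the only active defining inequalities are the instances of~\eqref{facets} with $|T|=2$ — indeed $X^\ast>0$ kills the nonnegativity facets, $\sum X^\ast_{ij}=\tfrac{n-1}2>\tfrac{n-K}2$ kills~\eqref{trivialFacet}, and an instance of~\eqref{facets} with $|T|\ge 3$ has strictly positive slack at $X^\ast$. Writing each active constraint in the form $R_l+X_{lj}+X_{lj'}-X_{jj'}=1$ with $R_l:=\sum_{m\ne l}X_{lm}$, I would show this linear system has $X^\ast$ as its unique solution, hence full column rank $\binom n2$: summing over $l\notin\{j,j'\}$ gives $2\Sigma-n\,X_{jj'}=n-2$ for every pair, where $\Sigma:=\sum_{k<l}X_{kl}$; summing that identity over all pairs forces $\Sigma=\tfrac{n-1}2$ (using $n\ge 3$), and substituting back gives $X_{jj'}=\tfrac1n$ for every pair. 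So the active rows span $\R^{\binom n2}$ and $X^\ast$ is a vertex of ${\rm RMet}^K_n$.

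Finally, the claim about ${\rm RMet}^{=K}_n$ follows with no extra work: if $X^\ast$ has exactly $K$ nonempty clusters then $\sum X^\ast_{ij}=\tfrac{n-K}2$, so $X^\ast$ lies on the face ${\rm RMet}^{=K}_n={\rm RMet}^K_n\cap\{\sum X_{ij}=\tfrac{n-K}2\}$ of ${\rm RMet}^K_n$, and a vertex of a polytope that lies in one of its faces is a vertex of that face. The main obstacle is the case $p=1$: without cross coordinates there is nothing to bootstrap from, and one must check that the triangle-type inequalities alone already pin down the uniform point — this is exactly where the global averaging identity ($\Sigma=\tfrac{n-1}2$, then coordinate-by-coordinate) is needed. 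A secondary nuisance is that $X_{uv}\ge 0$ fails to be facet-defining when $K=2$, forcing the short detour through the proof of Proposition~\ref{triv} to recover the cross coordinates there.
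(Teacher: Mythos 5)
Your proof is correct, and its skeleton is the same as the paper's: at each ratio-cut vector one exhibits tight facet-defining inequalities of ${\rm RMet}^K_n$ whose normals span $\R^{\binom{n}{2}}$, and the claim for ${\rm RMet}^{=K}_n$ follows because it is a face of ${\rm RMet}^K_n$ containing the relevant vectors. The differences are in execution, and they work in your favor. The paper first disposes of $n\leq 4$ via Proposition~\ref{tightness}, then for $n\geq 5$ lists, in three cases indexed by $(K,K')$, a family of tight inequalities of type~\eqref{facets} with $|T|=2$ together with either~\eqref{trivialFacet} or the nonnegativity facets, and asserts that ``it can be checked'' that these contain $\binom{n}{2}$ linearly independent normals. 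You treat all $n\geq 3$ uniformly and actually carry out the spanning verification: working modulo the cross coordinates and summing over $j\in\Gamma_k\setminus\{l\}$ recovers the within-cluster unit vectors; the cross unit vectors come from the nonnegativity facets when $K\geq 3$, and from the identity $-nX_{uv}\leq 0$ recycled from the proof of Proposition~\ref{triv} when $K=2$ (all ingredients of that combination are indeed tight when $p=K=2$, which is the point that needs checking); and for the one-cluster vector the averaging identity $2\Sigma-nX_{jj'}=n-2$ shows the active system has $X^\ast$ as its unique solution. Your $K=2$ detour is also the honest fix for the fact that $X_{uv}\geq 0$ is not facet-defining, hence not among the defining inequalities of ${\rm RMet}^2_n$, so it cannot simply be cited as an active facet there; the paper's case~(ii) avoids this by adding the tight inequality~\eqref{trivialFacet} to its list and leaving the independence check implicit. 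In short: same strategy, but your write-up supplies the linear algebra the paper leaves to the reader.
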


\begin{proof}
We first show that every ratio-cut vector is a vertex of ${\rm RMet}^K_n$.
By Proposition~\ref{tightness} it suffices to consider $n \geq 5$.
Let $\Gamma_k$, $k \in [K]$ form a partition of $[n]$, where as before we
allow for some empty $\Gamma_k$.
Denote by $K'$ the number of nonempty clusters and denote by $\hat X$ the corresponding ratio-cut vector.
We show that $\hat X$ is a vertex of ${\rm RMet}^K_n$ by presenting
$\binom{n}{2}$ linearly independent facets of  ${\rm RMet}^K_n$ that are satisfied tightly by $\hat X$.
The following cases arise:
\begin{itemize}[leftmargin=*]
\item [(i)] $K' = 1$: in this case for each $i,j,k$ in the partition,
consider facet-defining inequalities of the form~\eqref{facets} given by
$2(X_{ij} + X_{ik}) + \sum_{l \in [n] \setminus \{j,k\}}{X_{il}} \leq 1 + X_{jk}$,
$2(X_{ij} + X_{jk}) + \sum_{l \in [n] \setminus \{i,k\}}{X_{jl}}\leq 1 + X_{ik}$, and
$2(X_{ik} + X_{jk})  + \sum_{l \in [n] \setminus \{i,j\}}{X_{kl}}\leq 1 + X_{ij}$.

\item [(ii)] $K = K' = 2$: in this case for each $i,j$ in one partition and for each $k$ in the other partition
consider facet-defining inequalities of the form~\eqref{facets} given by
$2(X_{ij} + X_{ik}) + \sum_{l \in [n] \setminus \{j,k\}}{X_{il}} \leq 1 + X_{jk}$
and $2(X_{ij} + X_{jk}) + \sum_{l \in [n] \setminus \{i,k\}}{X_{jl}}\leq 1 + X_{ik}$.
In addition, consider the facet defining inequality $\sum_{1 \leq i < j \leq n}{X_{ij}} \geq \frac{n}{2}-1$.

\item [(iii)] $K > 2$, $K' \neq 1$, in this case for each $i,j$ in the same partition and for any $k$ in a different partition
consider facet-defining inequalities of the form~\eqref{facets} given by
$2(X_{ij} + X_{ik}) + \sum_{l \in [n] \setminus \{j,k\}}{X_{il}} \leq 1 + X_{jk}$
and $2(X_{ij} + X_{jk}) + \sum_{l \in [n] \setminus \{i,k\}}{X_{jl}}\leq 1 + X_{ik}$.
In addition, for each $i,j$ not in the same partition consider the facet defining inequality $X_{ij} \geq 0$.
\end{itemize}
It can be checked that $\hat X$ satisfies above
inequalities tightly and that these inequalities contain $\binom{n}{2}$ linearly independent facets implying that $\hat X$
is a vertex of ${\rm RMet}^K_n$.
%
%
%

Finally, notice that every ratio-cut vector corresponding to $K$ nonempty clusters belongs to ${\rm RMet}^{=K}_n$ and
by the above argument is a vertex of ${\rm RMet}^K_n$. Moreover, ${\rm RMet}^{=K}_n$ is a facet of ${\rm RMet}^K_n$; it then follows that all
such ratio-cut vectors corresponding to $K$ nonempty clusters are vertices of ${\rm RMet}^{=K}_n$ as well.
\end{proof}

\section{A new LP relaxation for K-means clustering}
\label{sec:relax}

As we detailed in Section~\ref{sec:ratioCutPoly}, the polytope ${\rm RCut}^{=K}_n$ corresponds
to a projection of the convex hull of the feasible region of Problem~\eqref{Kmeans2} defined by~\eqref{projection}.
Hence, to obtain an LP relaxation for K-means clustering, first, we outer-approximate the polytope
${\rm RCut}^{=K}_n$ by the polytope ${\rm RMet}^{=K}_n$. Next, we
introduce additional variables $X_{ii} := 1 - \sum_{j \in [n] \setminus \{i\}} {X_{ij}}$ for all $i \in [n]$
and let $X_{ji} = X_{ij}$ for all $1 \leq i < j \leq n$. Let $t \in \{2,\ldots, K\}$; for each $i \in [n]$ define
$\S^t_i :=\{ S \subseteq [n] \setminus\{i\}: 2 \leq |S| \leq t\}$.
It then follows that an LP relaxation of K-means clustering is given by:
\begin{align}
\label{lp:pK}
\tag{LPK}
\min  \quad  & \sum_{i,j \in [n]} {d_{ij} X_{ij}}\nonumber\\
\text{s.t.} \quad  & \Tr(X) = K,\label{eq1k}\\
& \sum_{j=1}^n X_{ij} = 1, \quad \forall i \in [n],\label{eq2k}\\
& \sum_{j \in S} {X_{ij}} \leq X_{ii} + \sum_{j, k \in S: j < k} {X_{jk}}, \quad \forall i \in [n], \; \forall S \in \S^t_i,\label{eq3k}\\
& X_{ij} \geq 0, \quad \forall 1 \leq i < j \leq n. \label{eq4k}
\end{align}
By the proof of Proposition~\ref{trivialFacet}, if $K =2$,
inequalities~\eqref{eq4k} are implied by equalities~\eqref{eq1k},~\eqref{eq2k} and inequalities~\eqref{eq3k}. However, for $K \geq 3$, these inequalities are facet-defining at hence are present in Problem~\eqref{lp:pK}.

Clearly, letting $t = K$ results in the strongest LP relaxation. However, it is important to note that system~\eqref{eq3k} contains $\Theta(n^{t+1})$ inequalities and hence, for large $t$, Problem~\eqref{lp:pK} is too expensive to solve.
Indeed,  even for $t = 2$, the above LP is expensive to solve for $n \geq 200$. To address this issue, a common approach is
to devise a cutting plane generation scheme together with the dual Simplex algorithm to solve the LP in an iterative manner (see for example~\cite{LisRen03}). Given a fixed $t$, let us consider the separation problem over inequalities~\eqref{eq3k}. For a fixed $i \in [n]$, these inequalities can be written as $\sum_{j, k \in S: j < k} {X_{jk}}  \geq  \sum_{j \in S} {X_{ij}}- X_{ii}$.
Hence, the separation problem over these inequalities is quite similar to the separation problem over \emph{clique inequalities} and therefore similar heuristics and acceleration techniques can be used (see for example~\cite{clique19}). We should also remark that
inequalities of the form~\eqref{eq3k} corresponding to subsets $S$ of smaller cardinality are \emph{sparser} (\ie contain fewer variables with nonzero coefficients) than those of the larger cardinality, a property that is highly desirable from a computational perspective.
A careful selection of $t$ and designing of an efficient cutting plane algorithm requires a systematic computational study and is a subject of future research.
However, as we demonstrate in Section~\ref{sec:numerics}, the LP relaxation with $t = 2$ outperforms the SDP relaxation, even for $K=5$ clusters.

\begin{remark}
Recall that Problem~\eqref{badLP} is the existing LP relaxation for K-means clustering~\cite{Awaetal15}.
To show that the feasible region of Problem~\eqref{lp:pK} is contained in the feasible region of this LP, it suffices to prove that inequalities $X_{ij} \leq X_{ii}$ for all $i \neq j \in [n]$ are implied by system~\eqref{eq1k}-\eqref{eq4k}. Without loss of generality
consider $X_{12} \leq X_{11}$. Consider the following inequalities and equalities all of
which are present in system~\eqref{eq1k}-\eqref{eq4k}:
\begin{itemize}
\item [(i)] $X_{12} + X_{13} \leq X_{11} + X_{23}$,
\item [(ii)] $X_{12} + X_{23} \leq X_{22} + X_{13}$,
\item [(iii)] $X_{12} + X_{1j} \leq X_{11} + X_{2j}$, for all $j \in \{4,\cdots,n\}$,
\item [(iv)] $\sum_{j \in [n]}{X_{1j}} = 1$,
\item [(v)] $\sum_{j \in [n]}{X_{2j}} = 1$.
\end{itemize}
Multiplying inequality~(i) by +2, inequality~(ii) by +1, each inequality of type~(iii) by +1, equality~(iv) by -1, equality~(v) by +1
and adding all resulting inequalities and equalities yields $n X_{12} \leq n X_{11}$ and this completes the argument.
\end{remark}

\subsection{Optimality of the planted clusters}

We now focus on the case with two clusters and obtain a sufficient condition
under which the ratio-cut vector corresponding to a planted clustering is an optimal solution of the LP relaxation.
As we discussed before, even with only two clusters, K-means clustering is NP-hard~\cite{BodGeo19}.
The LP relaxation of K-means clustering for $K=2$ is given by:
\begin{align}
\tag{LP2}
\label{lp:pK2}
\min  \quad & \sum_{i,j \in [n]} {d_{ij} X_{ij}} \nonumber\\
\text{s.t.} \quad & \Tr(X) = 2, \label{eq1}\\
& \sum_{j=1}^n X_{ij} = 1, \quad \forall 1\leq i\leq n, \label{eq2}\\
& X_{ij}+X_{ik} \leq X_{ii} + X_{jk}, \quad \forall i \neq j \neq k \in [n], \; j < k, \label{eq3}
\end{align}
where as before we let $X_{ji} = X_{ij}$ for all $1 \leq i < j \leq n$.
We start by constructing the dual of Problem~\eqref{lp:pK2};
define dual variables $\omega$ associated with~\eqref{eq1}, $\mu_i$, $i \in [n]$ associated  with~\eqref{eq2},
and $\lambda_{ijk}$ for all $(i,j,k) \in \Omega := \{(i,j,k): i \neq j \neq k \in [n], \; j < k\}$ associated  with~\eqref{eq3}. It then follows
that the dual of Problem~\eqref{lp:pK2} is given by
\begin{align}
\max \quad  & -(2\omega + \sum_{i \in [n]}{ \mu_i}) \nonumber\\
\text{s.t.} \quad & \mu_i + \mu_j + \sum_{k \in [n] \setminus \{i,j\}} {(\lambda_{ijk} + \lambda_{jik}-\lambda_{kij})} + 2 d_{ij} = 0, \;
\forall 1 \leq i < j \leq n, \label{d1}\\
&\omega + \mu_i -\sum_{\substack{j,k \in [n] \setminus \{i\}:\\ j < k}}{\lambda_{ijk}} = 0, \; \forall i \in [n],\label{d2}\\
& \lambda_{ijk} \geq 0, \quad \forall (i,j,k) \in \Omega, \nonumber
\end{align}
where we let $\lambda_{ikj} =\lambda_{ijk}$ for all $(i,j,k) \in \Omega$. Now consider the following planted model: suppose that $n$ is even, the first half of the points are in the first cluster and the
second half are in the second cluster; define $\C_1 :=\{1,\ldots,n/2\}$
and $\C_2 := \{n/2+1,\ldots,n\}$. Then the ratio-cut vector associated with this planted clustering is
given by:  $\bar X_{ij} = \frac{2}{n}$ for all $i < j \in \C_1$
and for all $i < j \in \C_2$ and $\bar X_{ij} = 0$, otherwise.
We would like to obtain conditions under which $\bar X$
is an optimal solution of Problem~\eqref{lp:pK2}.
To this end, it suffices to find a dual feasible point
$(\bar \lambda, \bar \mu, \bar \omega)$ for which strong duality is attained:
\begin{equation}\label{str}
2 \bar \omega + \sum_{i \in [n]}{ \bar \mu_i} = -\frac{4}{n}
\Bigg(\sum_{i,j\in \C_1: i < j} {d_{ij}} + \sum_{i,j \in \C_2: i <j} {d_{ij}} \Bigg).
\end{equation}
For notational simplicity,  for every $A \subseteq [n]$ and $f:A\to \R$, we define
$$
\underset{i \in A}{\dashsum}{f(i)} := \frac 1{|A|}\sum_{i \in A}{f(i)}.
$$
Moreover,
for each $i \in \C_l$, $l \in \{1,2\}$, let us define
$\din_i := \underset{j\in \C_l}{\dashsum}{d_{ij}}$,
and
$\dout_i := \underset{j \in [n]\setminus \C_l}{\dashsum}{d_{ij}}$.

We now present a sufficient condition for the optimality of the planted clusters.
%

\begin{theorem}\label{LPopt}
Define
\begin{equation}\label{cond3}
\eta := \frac12\Big(\underset{i \in \C_1} {\dashsum}{\din_i} + \underset{i \in \C_2}{\dashsum} {\din_i}\Big).
\end{equation}
Then the ratio-cut vector corresponding to the planted clusters is an optimal solution of Problem~\eqref{lp:pK2} if
for all  $i,j \in \C_l$,  $l \in \{1,2\}$, we have
%
\begin{equation}\label{easyC}
\underset{k \in [n] \setminus \C_l}{\dashsum}{\min\{d_{ik} + \din_j , d_{jk} + \din_i\}} -d_{ij}\geq  \eta.
\end{equation}
\end{theorem}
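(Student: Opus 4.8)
The plan is to exhibit an explicit dual-feasible point $(\bar\lambda, \bar\mu, \bar\omega)$ for the dual of Problem~\eqref{lp:pK2} and verify that it satisfies the strong-duality equation~\eqref{str}, which by weak LP duality certifies that $\bar X$ is optimal. First I would guess a natural choice for the multipliers $\mu_i$: since the dual constraints~\eqref{d1} and~\eqref{d2} are symmetric in the two clusters, it is reasonable to set $\bar\mu_i$ to depend on $i$ only through $\din_i$ (and cluster-independent constants), say something of the form $\bar\mu_i = -\din_i - c$ for $i \in \C_l$, and then force $\bar\omega$ from~\eqref{d2}. Plugging into~\eqref{str} should pin down $c$ in terms of $\eta$; this is where the definition~\eqref{cond3} of $\eta$ as the average of the within-cluster averaged distances comes from.

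Next I would construct the triangle multipliers $\bar\lambda_{ijk} \ge 0$. The only ``active'' triangle inequalities at $\bar X$ are those with a prescribed pattern of which indices lie in which cluster (complementary slackness tells us $\bar\lambda_{ijk}$ can be nonzero only when $\bar X_{ij}+\bar X_{ik} = \bar X_{ii}+\bar X_{jk}$). A short case analysis of $\bar X$ shows the tight triples are exactly those where $i$ and one of $j,k$ are in the same cluster and the third index is in the other cluster. So I would take $\bar\lambda_{ijk}$ supported on triples $(i,j,k)$ with, say, $i,j \in \C_l$ and $k \in [n]\setminus\C_l$ (using the symmetrization convention $\lambda_{ikj}=\lambda_{ijk}$). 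The constraint~\eqref{d1} for a within-cluster pair $i<j$ and for a cross-cluster pair must then be solved for these $\bar\lambda$'s; the cross-cluster equation~\eqref{d1} is what produces the $\min\{d_{ik}+\din_j,\ d_{jk}+\din_i\}$ term: one is free to split the required mass between $\lambda_{ijk}$ and $\lambda_{jik}$ (since both appear with coefficient $+1$ and the ``$-\lambda_{kij}$'' term is zero by the support choice), and the cheapest split that keeps both nonnegative gives rise to the minimum. The within-cluster equation~\eqref{d1} and the constraint~\eqref{d2} then have to be checked for consistency, and this consistency/nonnegativity requirement is precisely inequality~\eqref{easyC}.

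Concretely, I expect the verification to go: (a) write down $\bar\lambda$ in closed form (a formula involving $d_{ik}$, $\din_i$, $\din_j$ and the chosen minimum), (b) check $\bar\lambda_{ijk}\ge 0$ — this should be automatic or near-automatic from $d\ge 0$ — (c) check~\eqref{d1} holds for cross-cluster pairs by construction, (d) check~\eqref{d1} for within-cluster pairs, which after substitution becomes an identity modulo the definition of $\bar\mu_i$, (e) check~\eqref{d2}, which defines $\bar\omega$ consistently across all $i$ — here the two clusters must give the same $\bar\omega$, and that forces condition~\eqref{easyC} when combined with nonnegativity of the remaining slack, and (f) verify~\eqref{str}.

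The main obstacle I anticipate is bookkeeping: getting the averaging normalizations ($\tfrac1{|\C_l|}$, $\tfrac1{n-|\C_l|}$, the factors of $2$ in~\eqref{d1} and~\eqref{str}) exactly right so that the cluster-symmetric ansatz actually closes, and in particular organizing the case split for the cross-cluster $\lambda$'s so that the ``free mass'' argument yielding the $\min\{\cdot,\cdot\}$ is transparent rather than a brute-force computation. A secondary subtlety is confirming that no triangle inequality is violated in the primal at $\bar X$ is not needed (that is immediate), but confirming that the constructed $\bar\lambda$ is supported only on tight triples — otherwise complementary slackness would still let us conclude optimality, but cleanliness of the argument benefits from respecting it. Once the ansatz is chosen correctly, everything else should reduce to the algebra that condition~\eqref{easyC} was reverse-engineered to make work.
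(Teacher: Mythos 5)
Your overall strategy is exactly the paper's: build a dual certificate for \eqref{lp:pK2}, use complementary slackness to force $\bar\lambda_{ijk}=0$ on the non-tight triples (those with $i$ alone in one cluster and $j,k$ together in the other), choose $\bar\mu$ symmetrically, and let condition \eqref{easyC} emerge as the feasibility requirement for nonnegatively splitting the remaining mass between $\bar\lambda_{ijk}$ and $\bar\lambda_{jik}$. That skeleton is right, and the ``cheapest split subject to a fixed difference'' mechanism you describe is indeed where the $\min$ comes from.

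However, the one concrete formula you commit to would fail. You propose $\bar\mu_i = -\din_i - c$ with $c$ a cluster-independent constant, i.e.\ $\bar\mu_i$ depending on $i$ only through $\din_i$. Substituting the tight-support ansatz into the cross-cluster instance of \eqref{d1} (for $i\in\C_1$, $j\in\C_2$) forces $\bar\mu_i+\bar\mu_j+\dout_i+\dout_j+\din_i+\din_j-2\eta=0$ for \emph{every} cross pair, and with your ansatz this reads $\dout_i+\dout_j=2c+2\eta$, which is false in general since $\dout_i$ varies with $i$. The paper's choice is $\bar\mu_i=-\din_i-\dout_i+\eta$ (equation \eqref{i2n}); the $-\dout_i$ term is essential. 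Two smaller misattributions: (a) your characterization of the tight triangle inequalities omits the all-three-in-one-cluster triples, which are also tight at $\bar X$ (harmless, since you may legitimately set $\bar\lambda=0$ there, as the paper ultimately does, but it is not the complementary-slackness boundary); (b) the $\min$ is not produced by the cross-cluster equation alone --- the cross-cluster equations pin down the \emph{differences} $\bar\lambda_{ijk}-\bar\lambda_{jik}$ (equation \eqref{l1}), the within-cluster equation \eqref{d1} pins down their \emph{sums}, and nonnegativity then requires each sum to exceed the absolute difference, which is what yields $\min\{d_{ik}+\din_j,\,d_{jk}+\din_i\}$ and hence \eqref{easyC}. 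Relatedly, \eqref{d2} does not ``force'' \eqref{easyC}: in the paper it reduces to an identity implied by the within-cluster equations once $\bar\mu$ and $\bar\omega$ are fixed. With the corrected $\bar\mu$ your plan closes along the paper's lines, but as written the certificate cannot be completed.
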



\begin{proof}
By complementary slackness $\bar\lambda_{ijk} = 0$ if $i \in \C_1$ and $j,k \in \C_2$
or if $i \in \C_2$ and $j,k \in \C_1$. Substituting in~\eqref{d1} yields:
\begin{equation}\label{d11}
\bar \mu_i + \bar \mu_j + \sum_{k \notin \C_l} {(\bar\lambda_{ijk} + \bar\lambda_{jik})}
+ \sum_{k \in \C_l \setminus \{i,j\}} {(\bar\lambda_{ijk} + \bar\lambda_{jik}-\bar\lambda_{kij})}
+ 2 d_{ij} = 0,
\end{equation}
for every $i < j$ such that $i,j \in \C_l$, $l \in \{1,2\}$ and
\begin{equation}\label{d12}
\bar\mu_i + \bar\mu_j + \sum_{k \in \C_1 \setminus \{i\}} {(\bar\lambda_{ijk}-\bar\lambda_{kij})} + \sum_{k \in \C_2 \setminus \{j\}} {(\bar\lambda_{jik}-\bar\lambda_{kij})} + 2 d_{ij} = 0,
\end{equation}
for every $i \in \C_1$, $j \in \C_2$.
Moreover, equation~\eqref{d2} simplifies to
\begin{equation}\label{dp21}
\bar\omega + \bar\mu_i -\sum_{\substack{j \in \C_l \setminus \{i\},\\ k \notin \C_l}}{\bar\lambda_{ijk}} -\sum_{j <k \in \C_l \setminus \{i\}}{\bar\lambda_{ijk}}= 0,
\end{equation}
for each $i \in \C_l$, $l \in \{1,2\}$. Now for each $i, j \in \C_l$ and $k \notin \C_l$, $l \in \{1,2\}$, let
\begin{equation}\label{l1}
\bar\lambda_{ijk} - \bar\lambda_{jik} = \frac{d_{jk}-d_{ik}}{n/2}+ \frac{\din_{i}-\din_{j}}{n/2}.
\end{equation}
Substituting~\eqref{l1} in~\eqref{d12} yields:
\begin{equation}\label{i1}
\bar\mu_i + \bar\mu_j + \dout_i + \dout_j + \din_i + \din_j -\underset{k \in \C_1}{\dashsum}{\din_k}-\underset{k \in \C_2}{\dashsum}{\din_k}= 0, \quad \forall i \in \C_1, \; j \in \C_2.
\end{equation}
To satisfy~\eqref{i1}, let
\begin{equation}\label{i2n}
\bar\mu_i = - \din_i -\dout_i  + \eta, \quad \forall i \in [n].
\end{equation}
%
%
where $\eta$ is defined by~\eqref{cond3}.
Substituting~\eqref{i2n} in~\eqref{str} we obtain
\begin{equation}\label{omega}
\bar\omega = \frac{1}{2} \sum_{i \in [n]} {(\dout_{i}-\din_{i}}).
\end{equation}
Utilizing~\eqref{l1} and~\eqref{i2n}, equation~\eqref{d11} can be equivalently written as
\begin{equation}\label{after}
\sum_{k \in \C_2} {\bar\lambda_{ijk}} + \frac{1}{2}\sum_{k \in \C_1 \setminus \{i,j\}} {(\bar\lambda_{ijk} + \bar\lambda_{jik}-\bar\lambda_{kij})}= \din_i - d_{ij} + \dout_j  - \eta,
\end{equation}
for any $(i, j) \in \C_1$.
By~\eqref{i2n} and~\eqref{omega}, for each $i \in \C_1$, equation~\eqref{dp21} simplifies to
\begin{equation}\label{elim}
\sum_{\substack{j \in \C_1 \setminus \{i\},\\ k \in \C_2}} {\bar\lambda_{ijk}} + \sum_{j <k \in \C_1 \setminus \{i\}}{\bar\lambda_{ijk}}=
 \frac{1}{2} \sum_{j \in [n]} {(\dout_{j}-\din_{j}})-\dout_i - \din_i +\eta.
\end{equation}
We now obtain a  set of conditions under which system~\eqref{elim} is implied by equalities~\eqref{after}.
Firstly, it can be checked that
$$\sum_{j \neq k \in \C_1 \setminus \{i\}} {(\bar\lambda_{ijk} + \bar\lambda_{jik}-\bar\lambda_{kij})} = 2 \sum_{j <k \in \C_1 \setminus \{i\}}{\bar\lambda_{ijk}}.$$
Hence, for each $i \in \C_1$
it suffices to have
\begin{equation*}
\sum_{j \in \C_1 \setminus \{i\}}{\Big(\din_i - d_{ij} + \dout_j  -\eta \Big)} = \frac{1}{2} \sum_{j \in [n]} {(\dout_{j}-\din_{j}})-\dout_i - \din_i +\eta,
\end{equation*}
whose validity can be verified by a simple calculation.
%
Hence, to find a dual certificate, it suffices to find nonnegative $\bar \lambda_{ijk}$ satisfying equalities~\eqref{d11};
that is, for each $(i ,j) \in \C_1$, we should find $\bar \lambda_{ijk}$ satisfying the following system
\begin{align*}
& \sum_{k \in \C_2} {(\bar\lambda_{ijk} + \bar\lambda_{jik})} + \!\!\sum_{k \in \C_1 \setminus \{i,j\}} \!\!{(\bar\lambda_{ijk} + \bar\lambda_{jik}-\bar\lambda_{kij})}= \dout_i + \dout_j - 2 d_{ij} + \din_i + \din_j -2 \eta \\
& \bar\lambda_{ijk} \geq 0, \; \bar\lambda_{jik} \geq 0, \quad \forall k \in \C_2 \\
& \bar\lambda_{ijk} - \bar\lambda_{jik} = \frac{d_{jk} - d_{ik}}{n/2} + \frac{\din_i-\din_j}{n/2} , \quad \forall k \in \C_2\\
& \bar\lambda_{ijk} \geq 0,  \; \bar\lambda_{jik} \geq 0, \; \bar\lambda_{kij} \geq 0, \quad \forall  k \in \C_1 \setminus\{i,j\}.
\end{align*}
By letting $\bar\lambda_{ijk} \geq \max \Big\{0, \frac{d_{jk} - d_{ik}}{n/2} + \frac{\din_i-\din_j}{n/2} \Big\}$
and $\bar\lambda_{jik} \geq  \max \Big\{0, \frac{d_{ik} - d_{jk}}{n/2} + \frac{\din_j-\din_i}{n/2} \Big\}$ for all $k \in \C_2$,
it follows that the above system has a feasible solution, if
\begin{equation}\label{mess}
  \frac{1}{2}\sum_{k \in \C_1 \setminus \{i,j\}} {(\bar\lambda_{ijk} + \bar\lambda_{jik}-\bar\lambda_{kij})} \leq
  \underset{k \in \C_2}{\dashsum}{\min\{d_{ik} + \din_j , d_{jk} + \din_i\}} -d_{ij} - \eta,
\end{equation}
for all $i,j \in \C_1$ together with nonnegativity of the remaining multipliers.

By letting $\bar\lambda_{ijk} = 0$ for all $i,j,k \in \C_l$, $l \in \{1,2\}$, inequality~\eqref{mess} simplifies to
condition~\eqref{easyC} and this completes the proof.
\end{proof}

As we demonstrate in Section~\ref{sec:sbm}, for the SBM, condition~\eqref{easyC} leads to an overly conservative estimate for the minimum separation distance between cluster centers. We are hoping that the theoretical analysis presented in this paper serves as a
starting point for deriving more realistic recovery guarantees for the proposed LP relaxation. Obtaining recovery guarantees for $K \geq 3$ is left for future research as well.

\section{Recovery under the stochastic ball model}
\label{sec:sbm}

In this section, we consider a popular generative model for K-means clustering often referred to as the~\emph{stochastic ball model}
in the literature. This random model is defined as follows: let $\{\gamma^k\}_{k \in [K]}$ be ball centers in $\R^m$.
For each $k$, draw i.i.d. vectors $\{y^{k,i}\}_{i=1}^n$ from some rotation-invariant distribution supported on the unit ball.
The points in cluster $k$ are then taken to be $x^{k,i} := y^{k,i} + \gamma^k$. Moreover, we define $\Delta := \min_{k \neq l \in [K]} ||\gamma^k-\gamma^l||_2$.

Henceforth, we focus on the case of two clusters;
throughout this section, whenever we say \emph{with high probability}, we imply with probability tending to one as $n$ tends to infinity.
We are interested in the following question: what is the minimum separation distance $\Delta$ required for the LP relaxation to recover
the planted clusters with high probability? Before proceeding further with addressing this question, we first establish a recovery threshold for K-means clustering under the SBM. This threshold then serves as a recovery limit for any convex relaxation of K-means clustering. In the following, we denote by $\{e_i\}_{i=1}^m$ the standard basis for $\R^m$. Moreover, for any $k \in [m]$, we denote by $\H^k$ the $k$-dimensional Hausdorff measure.

\subsection{Recovery for K-means clustering}
In~\cite{BodGeo19}, the authors show that if the points are uniformly generated on two $m$-dimensional touching spheres for
some $m \geq 3$, in the continuum limit, the K-means clustering problem identifies the two individual spheres as clusters.
The goal of this section is to show that a similar recovery result is valid for the SBM.

We start by introducing some notation.
We denote by $B(x,r)$ the closed $m$-dimensional ball centered at $x$ with radius $r$. For a set $A\subset \R^m$, we denote by $\overline{A}$ the closure of $A$ and by $\partial A$ the boundary of $A$.
Given a Borel measure $\rho$ on $\mathbb R^m$ with support $S$
and a Borel-measurable subset $S_1 \subset S$ with complement $S_2 = S \setminus S_1$,
the {\it mean squared error} associated with the partition $\{S_1,S_2\}$ of $S$
is
$$
   \mathcal{E}_S (S_1) =  \min_{c\in \mathbb{R}^m} \int_{S_1} \|x-c\|^2 d \rho(x) +  \min_{d \in \mathbb{R}^m}\int_{S_2} \|x-d\|^2 d \rho(x).
$$
For every  Borel subset $A\subset \R^m$ and every $k \in [m]$, we define the measure $\H^k\res A$ as follows:
$$\H^k\res A(B)=\H^k(A\cap B), \qquad \mbox{for every Borel subset $B \subset \R^m$}.$$
For every  Borel-measurable subset $A\subset \R^m$, we denote by
$b(A):=\int_A x dx$
the barycenter of $A$. It is easy to check that, if $A$ is a $k$-dimensional smooth set and $\H^k\res A$ is a finite non-zero measure, than $b(A)$ is the only minimizer of the function  $y\in \R^m \mapsto \int \|x-y\|^2 d\H^k\res A$.
In this section we prove the following result:
\begin{theorem} \label{thm:intero}
For any $m\geq 3$, let $S:=B(-e_1,1)\cup  B(e_1,1)$ and $\rho:= \H^{m}\res B(-e_1,1)+ \H^{m}\res B(e_1,1)$. Then, up to a set of zero Lebesgue measure, the partition $\{B(-e_1,1),$ $ B(e_1,1)\}$ of $S$ is the unique minimizer of the mean squared error.
\end{theorem}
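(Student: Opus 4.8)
The plan is to slice the two solid balls into the concentric spheres about their centers $\pm e_1$ and to reduce the statement to the analogous result for a \emph{pair of spheres}, which is the configuration treated in~\cite{BodGeo19}. Write $B_\pm:=B(\pm e_1,1)$, and for $r\in(0,1)$ set $\Sigma_r^\pm:=\partial B(\pm e_1,r)$ and $\Sigma_r:=\Sigma_r^-\cup\Sigma_r^+$; the spheres $\Sigma_r^-,\Sigma_r^+$ lie at distance $2(1-r)>0$, hence are disjoint, and the radii $r\in\{0,1\}$ are $\rho$-negligible. Polar coordinates about $\pm e_1$ (equivalently, the coarea formula applied to $x\mapsto\|x\mp e_1\|$) give the slicing identity
\[
\int_A\|x-c\|^2\,d\rho=\int_0^1\Big(\int_{A\cap\Sigma_r}\|x-c\|^2\,d\H^{m-1}\Big)\,dr,\qquad A\subseteq S\ \text{Borel},\ c\in\R^m.
\]

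I would first reduce to the radial problem. Fix a partition $\{S_1,S_2\}$ of $S$. If $\rho(S_1)=0$ or $\rho(S_2)=0$, then $\mathcal E_S(S_1)=\min_d\int_S\|x-d\|^2\,d\rho$, which a short computation shows strictly exceeds $\mathcal E_S(B_-)$; so assume both masses positive. Taking $c$ to be the $\rho$-barycenter of $S_1$ (resp.\ of $S_2$) in the slicing identity and bounding each inner integral below by its infimum over $c$ yields the slice bound
\[
\mathcal E_S(S_1)\ \ge\ \int_0^1\mathcal E_{\Sigma_r}\big(S_1\cap\Sigma_r\big)\,dr,
\]
with equality precisely when, for a.e.\ $r$, the $\H^{m-1}$-barycenters of $S_1\cap\Sigma_r$ and of $S_2\cap\Sigma_r$ are independent of $r$ (and equal to the $\rho$-barycenters of $S_1,S_2$). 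Next I would feed in the sphere result slicewise: for fixed $r\in(0,1)$, scaling $\Sigma_r$ by $1/r$ gives two unit $m$-spheres whose centers lie at distance $2/r>2$, no closer than the touching configuration of~\cite{BodGeo19}; granting the corresponding statement there (see the obstacle below), for $m\ge3$ the partition $\{\Sigma_r^-,\Sigma_r^+\}$ is the unique (up to $\H^{m-1}$-null sets) minimizer of $\mathcal E_{\Sigma_r}$, with value $\int_{\Sigma_r^-}\|x+e_1\|^2\,d\H^{m-1}+\int_{\Sigma_r^+}\|x-e_1\|^2\,d\H^{m-1}$ since the barycenter of $\Sigma_r^\pm$ equals $\pm e_1$. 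Integrating in $r$ and applying the slicing identity to each ball identifies $\int_0^1\mathcal E_{\Sigma_r}(\Sigma_r^-)\,dr$ with $\mathcal E_S(B_-)$, so chaining with the slice bound gives $\mathcal E_S(S_1)\ge\mathcal E_S(B_-)$ for every partition, i.e.\ $\{B_-,B_+\}$ is a minimizer.

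For uniqueness I would trace the equality cases. If $\mathcal E_S(S_1)=\mathcal E_S(B_-)$, then equality must hold in the slice bound and, for a.e.\ $r$, in the slicewise inequality $\mathcal E_{\Sigma_r}(S_1\cap\Sigma_r)\ge\mathcal E_{\Sigma_r}(\Sigma_r^-)$. The uniqueness part of the sphere result forces $\{S_1\cap\Sigma_r,S_2\cap\Sigma_r\}=\{\Sigma_r^-,\Sigma_r^+\}$ for a.e.\ $r$, while the equality condition of the slice bound forces the $\H^{m-1}$-barycenter of $S_1\cap\Sigma_r$ to be a single point, independent of $r$. As the barycenters of $\Sigma_r^-$ and $\Sigma_r^+$ are the distinct points $-e_1$ and $e_1$, exactly one of the two assignments can occur for a.e.\ $r$; after relabeling, $S_1\cap\Sigma_r=\Sigma_r^-$ for a.e.\ $r$, hence $S_1=\bigcup_r\Sigma_r^-=B_-$ up to a Lebesgue-null set (the $r$-exceptional null set sweeps out an $\H^m$-null set by the slicing identity). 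This is the asserted uniqueness.

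The main difficulty will be the sphere-level input. Reference~\cite{BodGeo19} treats two \emph{touching} unit spheres, whereas the slices yield (after rescaling) unit spheres with centers at distance $2/r>2$ for every $r\in(0,1)$; one needs the result — with its uniqueness clause — for all center-distances $\ge2$. I expect this to follow from the same dual-certificate/variational argument, since enlarging the separation only favors the natural partition, but it must be checked, and the hypothesis $m\ge3$ is inherited from this step. Failing an off-the-shelf statement for the separated regime, the cleanest route is to prove the two-sphere case directly in that generality and then run the slicing argument above verbatim; the alternative of working directly with $S$ — reducing to hyperplane (Voronoi) partitions and then optimizing over the resulting two-parameter family of cuts — looks considerably more cumbersome.
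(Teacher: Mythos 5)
Your slicing skeleton is the same one the paper uses, but the argument as you have set it up does not close, and you have correctly located where: every slice $\Sigma_r=\partial B(-e_1,r)\cup\partial B(e_1,r)$ consists of two spheres of radius $r<1$ whose centers are at distance $2>2r$, i.e.\ \emph{separated} spheres, and the input you need at each slice --- that the natural partition is the unique minimizer over \emph{all} partitions of a pair of separated spheres --- is not what \cite{BodGeo19} proves and is not proved in your proposal. This is a genuine gap, not a technicality. The paper closes it with two moves that are absent from your plan. First, it does \emph{not} attack arbitrary partitions by slicing; it first reduces to hyperplane cuts orthogonal to $e_1$ (the proofs of Lemma~3.5 and Theorem~2.2 of \cite{BodGeo19} carry over verbatim from spheres to balls), so that only the one-parameter family $F(a)=\mathcal{E}_S(\{x\in S:x_1\le -a\})$ of Proposition~\ref{thm:uniqueminimumballs} has to be analyzed, and each slice then inherits a hyperplane cut rather than an arbitrary partition. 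The route you dismiss as ``considerably more cumbersome'' is in fact the one taken.

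Second, and this is the key missing idea: after the coarea formula produces a radius $r$ at which the slice inequality \eqref{c2} holds, the paper \emph{translates} the right-hand sphere from $\partial B(e_1,r)$ to $\partial B((2r-1)e_1,r)$, forming the set $S_3^r$. Replacing each cluster's reference point by its own barycenter and then rigidly translating one connected component of a cluster toward the other can only decrease the $2$-means energy (this is the first inequality of \eqref{c1}, in essence a parallel-axis computation), and the translated configuration $\partial B(-e_1,r)\cup\partial B((2r-1)e_1,r)$ is a pair of \emph{touching} spheres of radius $r$, to which Proposition~\ref{thm:uniqueminimum} applies directly after scaling (Remark~\ref{rem1}); this yields the strict inequality that produces the contradiction. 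Without this translation you are forced to prove a new two-sphere theorem in the separated regime (with its uniqueness clause, and for general rather than hyperplane partitions); with it, the touching-spheres result is used off the shelf. To salvage your write-up, either import the translation trick together with the prior reduction to hyperplane cuts, or actually carry out the separated-spheres extension you defer --- as written, the proof is incomplete at its central step.
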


In~\cite{BodGeo19}, the authors prove Theorem \ref{thm:intero} in the case where
$\rho$ is the surface measure for the union of two touching spheres, \ie $ \rho = \H^{m-1}\res  \partial B(-e_1,1)+ \H^{m-1}\res  \partial B(e_1,1)$.
To this end, they first prove that an optimal partition is given by a
separating hyperplane that is orthogonal to the symmetry axis (Lemma~3.5 and Theorem~2.2 in~\cite{BodGeo19}).
Subsequently,  they examine
the offset of the optimal separating hyperplane. More precisely they show the following:


\begin{proposition} \label{thm:uniqueminimum} [Theorems~2.4 and~2.5 in~\cite{BodGeo19}]
For any $m\geq 3$, let $S:=\partial B(-e_1,1)\cup \partial B(e_1,1)$ and  $\rho := \H^{m-1} \res \partial B(-e_1,1)+ \H^{m-1}\res  \partial B(e_1,1)$. Then the function $a\in \R \mapsto \mathcal{E}_S(\{x \in S: x_1 \le -a \})$ attains a minimum at $a=0$, and this minimum is unique.
\end{proposition}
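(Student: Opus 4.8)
The plan is to reduce the statement to a one-dimensional question and then to control a single sign condition. First I would exploit the rotational symmetry of $S$ and $\rho$ about the $x_1$-axis. Since both parts of the partition $\{x_1\le -a\}$, $\{x_1>-a\}$ are bodies of revolution about this axis, the optimal center of each part lies on the axis, and writing $\|x-c\|^2=(x_1-c_1)^2+\|x_\perp\|^2$ shows that $\mathcal{E}_S(\{x_1\le -a\})$ depends on the partition only through the one-dimensional marginal $g$ of $\rho$ under the projection $x\mapsto x_1$; the perpendicular contribution $\int_S\|x_\perp\|^2\,d\rho$ is a partition-independent constant. The marginal $g$ is supported on $[-2,2]$, symmetric about $0$, and (for $m>3$) vanishes at $0$ and at $\pm2$ with modes near $\pm1$: explicitly $g(x_1)\propto \big(-x_1(x_1+2)\big)^{(m-3)/2}$ on $[-2,0]$ and its reflection on $[0,2]$. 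Thus it suffices to show that the one-dimensional two-means cost of $g$ under the threshold cut at $x_1=-a$ is uniquely minimized at $a=0$.

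Next I would put the cost in a form amenable to sign analysis. Taking each optimal center to be the barycenter of its part and using that the global barycenter of $\rho$ is the origin (so $\int_{S_2}x_1\,d\rho=-\int_{S_1}x_1\,d\rho=:-M(a)$), one obtains
\[
\mathcal{E}_S(a)=C_0-\frac{\rho(S)\,M(a)^2}{p(a)\,q(a)},
\]
where $p(a),q(a)$ are the masses of the two parts and $C_0=\int_S\|x\|^2\,d\rho$ is constant; minimizing $\mathcal{E}_S$ is therefore the same as maximizing the between-cluster variance $\rho(S)M^2/(pq)$. For $m=3$ the density $g$ is uniform (Archimedes' hat-box theorem), $p,q,M$ are elementary, and a direct computation gives $\mathcal{E}_S(a)=C_0'+\kappa\,a^2$ with $\kappa>0$, an exact parabola, which settles this case; the remaining work is for $m>3$.

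For general $m$ I would differentiate. Writing $\xi_1=M/p<0$ and $\xi_2=-M/q>0$ for the two cluster means and $w(a)$ for the $(m-2)$-dimensional slice density of $\rho$ at $\{x_1=-a\}$, an envelope/flux computation (the centers held fixed, only the moving boundary contributing) yields
\[
\frac{d}{da}\,\mathcal{E}_S(a)=w(a)\,\big(\xi_2(a)-\xi_1(a)\big)\,\big(\xi_1(a)+\xi_2(a)+2a\big).
\]
Here $w(a)>0$ and $\xi_2-\xi_1>0$ on $(0,2)$, so the sign is governed entirely by the key inequality $\xi_1(a)+\xi_2(a)+2a>0$, i.e.\ the cut plane $\{x_1=-a\}$ lies strictly on the $c_1$-side of the perpendicular bisector $\{x_1=(\xi_1+\xi_2)/2\}$ of the two centers. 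Granting this, $\mathcal{E}_S$ is strictly increasing on $(0,2)$; by the reflection symmetry $x_1\mapsto -x_1$ the cost is even, $\mathcal{E}_S(a)=\mathcal{E}_S(-a)$, so it is strictly decreasing on $(-2,0)$; and for $|a|\ge2$ one part is $\rho$-null, giving $\mathcal{E}_S(a)=C_0>\mathcal{E}_S(0)$. Together these force $a=0$ to be the unique global minimizer.

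The main obstacle is the key inequality $\xi_1(a)+\xi_2(a)+2a>0$ on $(0,2)$, uniformly in $m\ge3$; intuitively it holds because the larger, more spread-out part $\{x_1>-a\}$ has its mean farther from the cut than the cap $\{x_1\le -a\}$. Since $\xi_1=\Psi(1-a)/\Phi(1-a)$ and $\xi_2=-p\,\xi_1/q$ with $\Phi(b)=\int_{-1}^{b}(1-t^2)^{(m-3)/2}\,dt$ and $\Psi(b)=\int_{-1}^{b}(t-1)(1-t^2)^{(m-3)/2}\,dt$, this reduces to a uniform estimate on incomplete-beta-type integrals. I would prove it by checking the favorable boundary behavior as $a\to0^+$ and $a\to2^-$ and then ruling out any interior zero of $\xi_1+\xi_2+2a$ (equivalently, any interior critical point of $\mathcal{E}_S$ other than $a=0$), either by a direct monotonicity analysis of $\Phi$ and $\Psi$ or by reducing general $m$ to the solvable $m=3$ case through a monotone dependence on $m$. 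This uniform estimate is exactly the technical content carried by Theorems~2.4 and~2.5 of~\cite{BodGeo19}.
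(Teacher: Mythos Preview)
The paper does not prove this proposition at all: it is quoted verbatim as Theorems~2.4 and~2.5 of~\cite{BodGeo19} and used as a black box in the proof of Proposition~\ref{thm:uniqueminimumballs}. So there is no ``paper's own proof'' to compare against; you have supplied strictly more than the paper does.

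Your reduction is correct and well organized. The rotational-symmetry argument placing the optimal centers on the $e_1$-axis, the identification of the one-dimensional marginal $g(x_1)\propto(-x_1(x_1+2))^{(m-3)/2}$ on $[-2,0]$, the between-cluster variance form $\mathcal{E}_S(a)=C_0-\rho(S)M(a)^2/(p(a)q(a))$, the explicit parabola for $m=3$ via Archimedes, and the envelope derivative
\[
\frac{d}{da}\,\mathcal{E}_S(a)=g(-a)\,(\xi_2-\xi_1)\,(\xi_1+\xi_2+2a)
\]
are all valid, and they isolate the right question: the sign of $\xi_1(a)+\xi_2(a)+2a$ on $(0,2)$.

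The gap is that you do not prove this key inequality. Your final sentence defers it back to Theorems~2.4 and~2.5 of~\cite{BodGeo19}, but those \emph{are} the proposition you are asked to prove, so the argument is circular at exactly the point where the work lies. The two mechanisms you sketch for closing it --- a direct monotonicity analysis of the incomplete-beta integrals $\Phi,\Psi$, or a monotone-in-$m$ comparison to the solvable $m=3$ case --- are plausible but neither is carried out, and the second would itself require a nontrivial comparison lemma. As written, then, this is a correct framework with the decisive estimate left unproved.
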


\begin{remark}\label{rem1}
Proposition \ref{thm:uniqueminimum} is invariant under scaling. In particular if for any $r>0$ we define $S:=\partial B(-re_1,r)\cup \partial B(re_1,r)$ and $\rho:= \H^{m-1}\res \partial B(-re_1,r)+ \H^{m-1}\res $ $ \partial B(re_1,r)$, then the function $a\in \R \mapsto \mathcal{E}_S(\{x \in S: x_1 \le -a \})$ attains a minimum at $a=0$, and this minimum is unique. Moreover, in~\cite{BodGeo19}, the authors renormalize $\rho$ to get a probability measure, but this changes $\mathcal{E}_S (S_1)$ just by a constant factor.
\end{remark}

Since the proofs of Lemma~3.5 and Theorem~2.2 in~\cite{BodGeo19} can be repeated verbatim for balls, in order to prove Theorem~\ref{thm:intero}, we just need to prove the following analogous result to Proposition \ref{thm:uniqueminimum}:

\begin{proposition} \label{thm:uniqueminimumballs}
For any $m\geq 3$, let $S:=B(-e_1,1)\cup  B(e_1,1)$ and $\rho:= \H^{m}\res$ $ B(-e_1,1)+ \H^{m}\res B(e_1,1)$. Then the function
 $$a\in \R \mapsto F(a ) :=\mathcal{E}_S(\{x \in S: x_1 \le -a \})$$
  attains a minimum at $a=0$, and this minimum is unique.
\end{proposition}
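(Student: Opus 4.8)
The plan is to reduce the problem over balls to an explicit one-dimensional minimization by exploiting the rotational symmetry of the configuration, and then to carry out the same offset analysis as in Proposition~\ref{thm:uniqueminimum} but with the surface measure replaced by the volume measure. Concretely, for each $a \in \R$ let $S_1(a) := \{x \in S : x_1 \le -a\}$ and $S_2(a) := S \setminus S_1(a)$; since the two balls $B(-e_1,1)$ and $B(e_1,1)$ touch at the origin, for $a$ in a neighbourhood of $0$ the set $S_1(a)$ is the union of the left ball together with a small spherical cap of the right ball (or with a cap removed, if $a<0$). I would first write $F(a)$ as a sum of two terms, $F(a) = G(a) + G(-a)$ where $G(a)$ denotes the optimal within-cluster squared-error of $S_1(a)$; here the optimal centre is the barycenter $b(S_1(a))$, so $G(a) = \int_{S_1(a)} \|x\|^2\, dx - \frac{\|b(S_1(a))\|^2}{|S_1(a)|}\cdot$ (up to the standard bias-variance identity $\int_{A}\|x-c\|^2 dx$ minimized at $c = b(A)/|A|$, giving $\int_A \|x\|^2 dx - |b(A)|^2/|A|$). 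By symmetry of the whole picture under $x_1 \mapsto -x_1$, the contribution of $S_2(a)$ equals that of $S_1(-a)$, so it suffices to understand the single function $a \mapsto G(a) + G(-a)$ near $a=0$ and to show it is strictly minimized at $a=0$; a separate, easier argument handles $a$ bounded away from $0$ (there one cluster is much larger and one checks $F(a) > F(0)$ directly, or invokes the hyperplane-optimality reduction to restrict attention to a compact range of $a$).

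The core computation is to evaluate $F'(0)$ and $F''(0)$ (or, more robustly, to expand $F(a) - F(0)$ to second order) using the explicit geometry of spherical caps. All relevant quantities — the volume of $S_1(a)$, its first moment $\int_{S_1(a)} x_1\, dx$ (the barycenter lies on the $x_1$-axis by rotational symmetry, so only the first coordinate matters), and $\int_{S_1(a)} \|x\|^2\, dx$ — are, for $a$ near $0$, equal to their values at $a=0$ plus integrals of polynomial weights over a thin cap of the right ball near the origin. The cross-section of the right ball $B(e_1,1)$ at height $x_1 = t$ for small $t>0$ is an $(m-1)$-ball of radius $\sqrt{1-(t-1)^2} = \sqrt{2t - t^2} = \sqrt{2t}\,(1+O(t))$, so its $(m-1)$-volume scales like $t^{(m-1)/2}$; integrating against $1$, $x_1 = t$, and $\|x\|^2$ produces half-integer powers of $a$. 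I expect the upshot to be that $F'(0) = 0$ by the $a \mapsto -a$ symmetry, and that the leading non-constant term of $F(a) - F(0)$ is a positive multiple of $|a|^{(m+1)/2}$ (or, once the competing $O(a^2)$ term from the shift of the barycenter of the big cluster is accounted for, a positive multiple of $a^2$ when $m=3$), which gives both the minimum at $0$ and its uniqueness on the near-zero range. The sign of this leading coefficient — showing that the gain from better-centering the large cluster never outweighs the loss from the unbalanced thin cap — is the crux.

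The main obstacle will be exactly this sign verification: unlike the surface-measure case of~\cite{BodGeo19}, where the cap mass scales like $t^{(m-1)/2}$ but the competing terms are organized differently, here I must carefully track which contribution dominates — the $O(a^2)$ perturbation of $\|b(S_1(a))\|^2/|S_1(a)|$ coming from moving the optimal centre of the (essentially full) left ball, versus the $O(a^{(m+3)/2})$-type contribution from the moment of the thin cap — and confirm their combination is strictly positive for all $m \ge 3$. I would handle this by expanding each of the three cap integrals to sufficiently high order in $a$, assembling $F(a) = F(0) + c_2 a^2 + c_{(m+1)/2}|a|^{(m+1)/2} + \cdots$ (with the $a^2$ term present only through the big-cluster barycenter shift and the lower-order half-integer term appearing for $m=3$), and checking $c_2 > 0$, $c_{(m+1)/2} > 0$ case by case; the restriction $m \ge 3$ is what makes the cap contribution decay fast enough that the favourable $a^2$ term (or the favourable half-integer term) controls the expansion. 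For $a$ outside a small neighbourhood of $0$ I would argue separately, using monotonicity of $F$ on $[0,\infty)$ or a crude lower bound, that $F(a) > F(0)$, completing the proof that $a=0$ is the unique global minimizer.
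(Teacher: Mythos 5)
There is a genuine gap, and it is exactly in the part you defer to ``a separate, easier argument.'' Your local expansion of $F(a)-F(0)$ near $a=0$ can only show that $a=0$ is a strict \emph{local} minimizer, and that fact does not carry the content of the proposition: the reassignment cost of the thin cap near the touching point is of order $a^{(m+3)/2}$ (note that points near the origin are almost equidistant from the two centers $\pm e_1$, so the order-$|C_a|$ terms cancel and only the $4t=4|x_1|$ differential survives), while the re-centering gain from the barycenter shifts is of order $a^{m+1}$; since $(m+3)/2<m+1$ for all $m\ge 2$, the local analysis certifies a local minimum at $a=0$ in every dimension $m\ge 2$ --- including $m=2$, where the proposition is \emph{false}. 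As recorded in Remark~\ref{Lowdimension}, in dimension two the global minimum of $F$ is attained at some $a\neq 0$ bounded away from the origin. So the dimension restriction $m\ge 3$ cannot enter through the Taylor coefficients at $a=0$ (contrary to your expectation that ``$m\ge 3$ is what makes the cap contribution decay fast enough''); it must enter in the regime $|a|$ bounded away from $0$, which is precisely where you offer only ``monotonicity of $F$ on $[0,\infty)$ or a crude lower bound.'' Neither can work as stated: $F$ is not monotone on $[0,\infty)$ when $m=2$, so any global argument has to genuinely use $m\ge 3$, and your proposal contains no mechanism for doing so. Some of the bookkeeping is also off (for $a>0$ the set $S_1(a)$ is the left ball with a cap \emph{removed}, not the left ball plus a cap of the right ball; the barycenter-shift correction to $\|b(S_1(a))\|^2/|S_1(a)|$ is of order $a^{(m+1)/2}$, not $a^2$, and re-centering always \emph{decreases} the cost, so it works against you), but these are repairable; the missing global argument is not.

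For contrast, the paper's proof is entirely global and makes no expansion at $a=0$. Assuming $F(a)\le F(0)$ for some $a>0$, it foliates each ball into concentric spheres via the coarea formula, extracts a single radius $r$ at which the shell-wise inequality persists, translates the right shell to $\partial B((2r-1)e_1,r)$ so that the two radius-$r$ spheres touch, and then invokes the already-established global uniqueness result for touching spheres (Proposition~\ref{thm:uniqueminimum}, i.e.\ Theorems~2.4--2.5 of \cite{BodGeo19}, together with its scale invariance) to get a strict reverse inequality and hence a contradiction. All of the dimension dependence $m\ge 3$ is inherited from the sphere result rather than re-derived. If you want to salvage a direct computational proof, you would have to carry out the full one-dimensional analysis of $F$ on all of $[0,\infty)$ (in effect redoing the sphere computation of \cite{BodGeo19} with volume measure), not just its germ at $a=0$.
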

\begin{proof}
Assume by contradiction there exists $a\neq 0$ such that $F(a )\leq F(0)$. By symmetry, we can assume $a> 0$.
Define $S_1:=\{x \in S: x_1 \le -a \}$, $S_1^r:=\{x \in\partial B(-e_1,r) : x_1 \le -a \}$, $S_2^r:=\{x \in\partial B(-e_1,r) : x_1 \ge -a \}\cup \partial B(e_1,r)$, and as before, we let
$S_2 = S \setminus S_1$. Then, by Coarea formula~(see Chapter~2 of~\cite{TonioBook})
\begin{equation*}
\begin{split}
\int_0^1&\int_{\partial B(-e_1,r)} \|x+e_1\|^2 d\H^{m-1}(x)dr +  \int_0^1\int_{\partial B(e_1,r)} \|x-e_1\|^2 d\H^{m-1}(x)dr=F(0)\\
&\geq  F(a )=\int_{S_1} \|x-b(S_1)\|^2 d x +  \int_{S_2} \|x-b(S_2)\|^2 dx\\
&=\int_0^1\int_{S_1^r} \|x-b(S_1)\|^2 d\H^{m-1}(x)dr +  \int_0^1\int_{S_2^r} \|x-b(S_2)\|^2 d\H^{m-1}(x)dr.
\end{split}
\end{equation*}
We deduce that there exists $r\in (0,1)$, such that
\begin{equation}\label{c2}
\begin{split}
\int_{\partial B(-e_1,r)}& \|x+e_1\|^2 d\H^{m-1}(x) + \int_{\partial B(e_1,r)} \|x-e_1\|^2 d\H^{m-1}(x)\\
&\geq \int_{S_1^r} \|x-b(S_1)\|^2 d\H^{m-1}(x) +  \int_{S_2^r} \|x-b(S_2)\|^2 d\H^{m-1}(x).
\end{split}
\end{equation}
Define $S_3^r:=\{x \in\partial B(-e_1,r) : x_1 \ge -a \}\cup \partial B((2r-1)e_1,r)$. It then follows that
\begin{equation}\label{c1}
\begin{split}
&\int_{S_1^r} \|x-b(S_1)\|^2 d\H^{m-1}(x) +  \int_{S_2^r} \|x-b(S_2)\|^2 d\H^{m-1}(x)\\
&\geq \int_{S_1^r} \|x-b(S_1^r)\|^2 d\H^{m-1}(x) +  \int_{S_3^r} \|x-b(S_3^r)\|^2 d\H^{m-1}(x) {\color{red} }\\
&> \int_{\partial B(-e_1,r)} \|x+e_1\|^2 d\H^{m-1}(x) +  \int_{\partial B((2r-1)e_1,r)} \|x-(2r-1)e_1\|^2 d\H^{m-1}(x)\\
&=\int_{\partial B(-e_1,r)} \|x+e_1\|^2 d\H^{m-1}(x) + \int_{\partial B(e_1,r)} \|x-e_1\|^2 d\H^{m-1}(x),
\end{split}
\end{equation}
where the first inequality follows from the definition of the barycenter and the second inequality follows from Proposition \ref{thm:uniqueminimum} and Remark \ref{rem1}.
Combining \eqref{c2} with \eqref{c1}, we get the desired contradiction.
\end{proof}

\begin{remark}\label{Lowdimension}
It is well-known that in dimension one, for both spheres and balls,
K-means clustering recovers the planted clusters with high probability only if $\Delta > 1 + \sqrt{3}$~(see for example~\cite{IduMixPetVil17,BodGeo19}).
As we detail in the next section $\Delta > 1 + \sqrt{3}$ is also sufficient for recovery with high probability, hence settling the question in dimension one.
In~\cite{BodGeo19} the authors show that for two touching spheres in dimension two, the minimum of  $F(a ) =\mathcal{E}_S(\{x \in S: x_1 \le -a \})$ is attained at a point $a \neq 0$. The authors of~\cite{IduMixPetVil17} numerically verify that
a similar result holds for the SBM in dimension two.
To date, the recovery threshold in dimension two, for both spheres and balls, remains an open question.
\end{remark}

\subsection{Recovery for the LP relaxation}

In this section, we obtain a recovery guarantee for the proposed LP relaxation under the SBM.
Namely, we prove that our deterministic optimality condition given by inequality~\eqref{easyC} implies that
Problem~\eqref{lp:pK2} recovers the planted
clusters with high probability, provided that $\Delta > 1 + \sqrt{3} \approx 2.73$.
This is a significant improvement compared to the only existing
recovery result for an LP relaxation of K-means clustering stating that recovery is possible
with high probability if and only if $\Delta > 4$~\cite{Awaetal15}.
Moreover, by Remark~\ref{Lowdimension}, this sufficient condition is tight for $m=1$ and implies that in dimension one, the NP-hard K-means clustering problems recovers the planted clusters with high probability if and only if $\Delta > 1 + \sqrt{3}$.

In the remainder of this section, for an event $A$, we denote by $\prob(A)$ the probability of $A$.
We denote by $\avg[Y]$ the expected value of a random variable $Y$. In case of a multivariate random variable $X_{ij}$, the conditional expected value in $j$, with $i$ fixed, will be denoted either with $\avg_i[X]$ or with $\avg^j[X]$.
To prove the next theorem, we make use of two technical lemmas; to streamline the presentation, these lemmas are given in Section~\ref{sec:proofs}.

\begin{theorem}\label{recovery}
Let $K = 2$ and suppose that the points are generated according to the SBM.
Then Problem~\eqref{lp:pK2} recovers the planted clusters with high probability, if $\Delta > 1+ \sqrt{3}$.
\end{theorem}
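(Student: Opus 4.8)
The plan is to verify the deterministic sufficient condition~\eqref{easyC} of Theorem~\ref{LPopt} holds with high probability whenever $\Delta > 1+\sqrt 3$. Recall the setup: the $n/2$ points in each cluster are of the form $x^{k,i} = y^{k,i} + \gamma^k$, where $y^{k,i}$ is drawn from a rotation-invariant distribution on the unit ball and $\|\gamma^1 - \gamma^2\| = \Delta$. Without loss of generality I would place $\gamma^1 = -\frac{\Delta}{2}e_1$ and $\gamma^2 = \frac{\Delta}{2}e_1$. The squared distances are $d_{ij} = \|x^i - x^j\|^2$, and the quantities $\din_i$, $\dout_i$, $\eta$ are empirical averages of these. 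The first step is a law-of-large-numbers reduction: I would invoke the two technical lemmas referenced from Section~\ref{sec:proofs} to show that, uniformly over $i,j$, the empirical averages $\din_i$, $\dashsum_{k\notin\C_l}\min\{d_{ik}+\din_j, d_{jk}+\din_i\}$ and $\eta$ concentrate around their population counterparts — that is, around the corresponding integrals against the uniform-on-the-ball measure. This converts~\eqref{easyC} into a deterministic inequality between integrals depending only on $m$, $\Delta$, and the fixed positions $y^{1,i}, y^{1,j}$ of the two points $i,j$ in the same cluster.

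The second step is to compute (or lower-bound) these population quantities. Writing $u = y^{1,i}$, $v = y^{1,j}$ for two fixed points in cluster $1$, and letting $Y$ denote a generic point uniform on the unit ball centered at the origin, the key term is
\begin{equation*}
\avg_Y\bigl[\min\{\|u - (Y+\Delta e_1)\|^2 + \din_j,\ \|v - (Y+\Delta e_1)\|^2 + \din_i\}\bigr] - \|u-v\|^2,
\end{equation*}
which must be shown to be at least $\eta$, the average of $\din$ over both clusters. Here $\din_i = \avg_{Y'}\|u - Y'\|^2 = \|u\|^2 + \avg\|Y'\|^2$ for $Y'$ uniform on the unit ball, so $\din_i = \|u\|^2 + c_m$ with $c_m = m/(m+2)$, and likewise $\din_j = \|v\|^2 + c_m$, $\eta = c_m + \frac12(\avg\|u\|^2 + \text{avg over cluster }2) = c_m + \tilde c_m$ where $\tilde c_m$ is again a dimension-dependent constant close to $c_m$ (with equality in the continuum). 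Expanding $\|u - Y - \Delta e_1\|^2 = \|u\|^2 - 2u\cdot(Y+\Delta e_1) + \|Y+\Delta e_1\|^2$, the cross-term with $\Delta$ dominates: the left side of~\eqref{easyC} grows like $\Delta^2$ minus lower-order terms, while $\eta = O(1)$. So the inequality should reduce, after taking the worst case over $u,v$ in the unit ball, to a scalar inequality in $\Delta$ of the form $\Delta^2 - (\text{linear in }\Delta) - (\text{constant}) \ge 0$, whose threshold is exactly $\Delta > 1 + \sqrt 3$. The appearance of $1+\sqrt 3$ — the same constant as the one-dimensional K-means threshold in Remark~\ref{Lowdimension} — strongly suggests the worst case is attained when $u,v$ are antipodal along the $e_1$ axis, i.e. $u = -e_1$, $v = +e_1$ (or vice versa), so that $\|u-v\|^2 = 4$ is as large as possible while the two points straddle the separating region; I would check that configuration explicitly to pin down the constant.

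The main obstacle I anticipate is the $\min$ inside the expectation: it is not linear, so one cannot simply integrate term by term, and the region of $Y$ where one argument beats the other depends on $u$ and $v$. I would handle this by the crude but sufficient bound $\min\{A,B\} \ge \frac12(A+B) - \frac12|A-B|$, integrate the symmetric part exactly (it is a clean quadratic integral over the ball), and bound the $\avg|A-B|$ term using $|A-B| \le |\,\|u-Y-\Delta e_1\|^2 - \|v-Y-\Delta e_1\|^2\,| + |\din_j - \din_i|$, with the first piece controlled by Cauchy--Schwarz and the boundedness of all points. A secondary technical point is making the concentration uniform over the $\binom{n/2}{2}$ pairs $(i,j)$ and over $i,j$ ranging in the other cluster's averages; a union bound with the exponential/polynomial tail estimates furnished by the Section~\ref{sec:proofs} lemmas, together with the fact that all random variables are bounded (the balls have radius $1$), should suffice, since the number of pairs is only polynomial in $n$. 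Finally I would note that the argument is symmetric in the two clusters, so verifying~\eqref{easyC} for $l=1$ gives $l=2$ for free, completing the proof.
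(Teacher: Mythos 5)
Your overall architecture matches the paper's: reduce condition~\eqref{easyC} to its population version via Hoeffding-type concentration and a union bound over the polynomially many pairs (this part of your plan is essentially the paper's proof of Theorem~\ref{recovery} together with Lemma~\ref{MasterOfProbability}), then verify the resulting deterministic inequality between integrals over the balls. The gap is in the second step, and it is exactly where the constant $1+\sqrt 3$ lives. After expanding the squares, the population condition is equivalent to showing $\max_{x,y\in\B_1}\big(\dashint_{\B_2}\max\{x^Tz,y^Tz\}\,d\H^m(z)-x^Ty\big)<\tfrac12\Delta^2$, and the theorem requires knowing that this maximum equals $\Delta+1$, attained at $(x,y)=(e_1,-e_1)$. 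Your proposed treatment of the min/max, namely writing $\min\{A,B\}=\tfrac12(A+B)-\tfrac12|A-B|$ and then bounding $\avg|A-B|$ by Cauchy--Schwarz and boundedness, is too lossy for this: with $A-B=-2(u-v)\cdot(Y+\Delta e_1)$, Cauchy--Schwarz gives $\avg|A-B|\le 2\|u-v\|(\Delta+1)$, whereas at the extremal antipodal pair the true value is $2\|u-v\|\Delta$. Propagating that slack through the functional turns the bound $\Delta+1$ into roughly $\Delta+2$, and the resulting threshold is $\Delta>1+\sqrt5$, not $1+\sqrt3$. Moreover, ``checking the configuration $u=e_1$, $v=-e_1$ explicitly'' only evaluates the functional at one point; it does not show that this point is the global maximizer over all $u,v$ in the ball, which is the actual content needed. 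The paper devotes a separate seven-step argument (Lemma~\ref{l:condition}) to this: slicing the ball into spheres via the coarea formula, symmetrizing the integrand over pairs $z,w$ reflected across the $e_1$-axis, reducing to dimension two and then to the boundary circle using bilinearity in the radial variables, and finally a critical-point and boundary-case analysis in angular coordinates. Some genuinely sharper mechanism of this kind is indispensable; a generic $O(1)$ estimate of the $\avg|A-B|$ term cannot recover the stated constant.

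A secondary omission: the theorem asserts that the planted ratio-cut vector is the \emph{unique} optimal solution, while condition~\eqref{easyC} only certifies optimality. The paper disposes of uniqueness by observing that the objective vector $d$ has a distribution absolutely continuous with respect to Lebesgue measure on $\R^{\binom n2}$, so it is almost surely not orthogonal to any of the finitely many edge directions of ${\rm RMet}^{=2}_n$. Your proposal does not address this point; it is easy to fix but should be stated.
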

\begin{proof}
To prove the statement, we need to show that for $\Delta > 1+ \sqrt{3}$, with high probability
the ratio-cut vector corresponding to the planted clusters is the unique optimal solution of Problem~\eqref{lp:pK2}.

To prove uniqueness, notice that the solution
to the LP is not unique only if the objective function coefficient vector $d = \{d_{ij}\}_{1 \leq i < j \leq n}$ is orthogonal to an edge of the polytope ${\rm RMet^{=2}_n}$.
The objective function coefficient vector is generated from a probability distribution which is absolutely continuous with respect to the Lebesgue measure $\H^{\binom{n}{2}}$ in $\R^{\binom{n}{2}}$.
The set of all ``bad'' directions however is the union of finitely many $\binom{n}{2}-1$-dimensional subspaces and hence is a zero $\H^{\binom{n}{2}}$-measure set. Hence any optimal solution is unique with probability one.

We now address the question of optimality of the planted clusters under the SBM. In particular, we show that the optimality condition~\eqref{easyC} holds with high probability.
Namely, we show that, given $\epsilon > 0$ as defined in the statement of Lemma \ref{MasterOfProbability} (since $\Delta > 1+ \sqrt{3}$), we have
\begin{equation*}
\begin{split}
\prob \Big (\bigcap_{i,j\in \C_1}\Big\{d_{ij} &+\eta-\underset{k \in \C_2}{\dashsum}{\min\{d_{ik} + \din_j , d_{jk} + \din_i\}} \leq 0\Big\}\Big ) \\
&\geq 1-\Big(4e^{-2\binom{n/2}{2}\epsilon^2/16}+ne^{-n \epsilon^2/32}+2\binom{n/2}{2}e^{-2n \epsilon^2/128}\Big ).
\end{split}
\end{equation*}
We first observe that
\begin{equation}\label{usef}
\begin{split}
\prob &  (\{ |\eta-\avg[\eta]|\geq \epsilon\}) \\
&= \prob\Big(\Big |\underset{i,j \in \C_1}{\dashsum}d_{ij} -  \avg\Big[\underset{i,j \in \C_1}{\dashsum}d_{ij}\Big] +\avg\Big[\underset{i,j \in \C_2}{\dashsum} d_{ij}\Big]- \underset{i,j \in \C_2}{\dashsum} d_{ij}\Big | \geq 2\epsilon \Big) \\
&\leq \prob\Big (\Big\{\Big |\underset{i,j \in \C_1}{\dashsum}d_{ij} -  \avg\Big[\underset{i,j \in \C_1}{\dashsum}d_{ij}\Big]\Big|\geq \epsilon \Big\}  \cup \Big\{\Big |\avg\Big[\underset{i,j \in \C_2}{\dashsum} d_{ij}\Big]- \underset{i,j \in \C_2}{\dashsum} d_{ij}\Big | \geq \epsilon\Big\}\Big ) \\
&\leq  \prob\Big (\Big |\underset{i,j \in \C_1}{\dashsum}d_{ij} -  \avg\Big[\underset{i,j \in \C_1}{\dashsum}d_{ij}\Big] \Big|\geq \epsilon\Big)+ \prob\Big (\Big |\avg\Big[\underset{i,j \in \C_2}{\dashsum} d_{ij}\Big]- \underset{i,j \in \C_2}{\dashsum} d_{ij}\Big | \geq \epsilon\Big ) \\
&\leq 4e^{-2\binom{n/2}{2}\epsilon^2/16}.
\end{split}
\end{equation}
The first inequality holds by set inclusion and the third inequality follows from Hoeffding's inequality (see for example Theorem 2.2.6 in~\cite{VerBookHDP}), since
$d_{ij}$, $i,j \in \C_l$ are i.i.d. random variables for every $l \in \{1,2\}$ and $d_{ij} \in [0,4]$.

For notational simplicity, let us denote
$$t_{ij}:=\avg^k\Big[\underset{k \in \C_2}{\dashsum}{\min\{d_{ik} + \avg_j[ \din_j] , d_{jk} + \avg_i[\din_i]\}}\Big].$$
We now observe that
\begin{equation}\label{3333}
\begin{split}
\prob \Big (\bigcup_{i,j\in \C_1}\Big\{\Big |t_{ij}&-\avg^k\Big[\underset{k \in \C_2}{\dashsum}{\min\{d_{ik} + \din_j , d_{jk} + \din_i\}}\Big]\Big | \geq \epsilon\Big \}\Big ) \\
&\leq \prob \Big (\bigcup_{i\in \C_1}\Big\{\Big | \din_i- \avg_i[ \din_i]\Big |\geq \epsilon/2\Big\}\Big )\leq ne^{-n \epsilon^2/32},
\end{split}
\end{equation}
where the first inequality follows from the linearity of expectation and the second inequality follows from the application of Hoeffding's inequality and taking the union bound.  Combining the previous estimates, we conclude the claimed inequality:
\begin{equation*}
\begin{split}
&\prob \Big (\bigcap_{i,j\in \C_1}\Big\{ d_{ij} +\eta-\underset{k \in \C_2}{\dashsum}{\min\{d_{ik} + \din_j , d_{jk} + \din_i\}} \leq 0\Big\}\Big ) \\
&\geq \prob \Big(\bigcap_{i,j\in \C_1}\Big\{d_{ij} +\eta-d_{ij}-\avg[\eta]+t_{ij}-\underset{k \in \C_2}{\dashsum}{\min\{d_{ik} + \din_j , d_{jk} + \din_i\}} \leq 3\epsilon\Big\}\Big) \\
&\geq  \prob\Big (\Big\{\Big | \eta-\avg[\eta] \Big|<\epsilon\Big\}\cap \bigcap_{i,j\in \C_1}\Big\{\Big |t_{ij}-\avg^k\Big[\underset{k \in \C_2}{\dashsum}{\min\{d_{ik} + \din_j , d_{jk} + \din_i\}}\Big]\Big | <\epsilon\Big \} \\
&\cap\bigcap_{i,j\in \C_1}\Big\{\Big |\avg^k\Big[\underset{k \in \C_2}{\dashsum}{\min\{d_{ik} + \din_j , d_{jk} + \din_i\}}\Big]-\underset{k \in \C_2}{\dashsum}{\min\{d_{ik} + \din_j , d_{jk} + \din_i\}}\Big | <\epsilon \Big\} \Big ) \\
&\geq  1-\Big(4e^{-2\binom{n/2}{2}\epsilon^2/16}+ne^{-n \epsilon^2/32}+2\binom{n/2}{2}e^{-2n \epsilon^2/128}\Big ).
\end{split}
\end{equation*}
The first inequality follows from Lemma~\ref{MasterOfProbability}, since $\Delta > 1+ \sqrt{3}$; the second inequality holds by set inclusion; the
third inequality is obtained by taking the union bound, followed by the application of Hoeffding's inequality, inequalities~\eqref{usef}
and~\eqref{3333}.
\end{proof}

\section{Numerical Experiments}
\label{sec:numerics}
In this section, we conduct a preliminary numerical study to demonstrate the desirable numerical properties of the proposed LP relaxation defined by~\eqref{lp:pK}. A comprehensive computational study including the design and implementation of a separation algorithm for inequalities~\eqref{eq3k} is a subject of future research.
First, we compare the recovery properties of the proposed LP relaxation
versus the SDP relaxation defined by~\eqref{SDP}. To this end,
we set $m \in \{2,3\}$, $K \in \{2,3\}$ and generate the points in each cluster according
to the SBM.
We set $t = 2$ in the LP relaxation. As before we denote by $\Delta$ the minimum distance between the cluster centers.
For each fixed configuration $(m,K)$, we consider various values for $\Delta$;
namely, we set $\Delta \in [2:0.01: \bar \Delta]$,
where $\bar \Delta$ is set to a value at which recovery is clearly achieved for both algorithms.  For each fixed $\Delta$, we conduct 20
random trials. We count the number of times the optimization algorithm returns the planted clusters as the optimal solution;
dividing this number by total number of trials, we obtain the empirical rate of success. All experiments are performed on the {\tt NEOS} server~\cite{neos98}; LPs are solved with {\tt GAMS/CPLEX}~\cite{cplex} and SDPs are solved with {\tt GAMS/MOSEK}~\cite{mosek}.

Our results are depicted in Figure~\ref{figure1}. As can be seen from these graphs, in all configurations, the LP clearly outperforms the SDP in recovering the planted clusters. In particular, results for $K = 2$ suggest that our recovery guarantee of Section~\ref{sec:sbm} is excessively conservative.  In addition, it can be seen that the recovery threshold of the LP relaxation in dimension $m =3$ is better than the threshold in dimension $m =2$; this effect is not reflected in our recovery guarantee and
is a subject to future research.
We also remark that in all these experiments, the optimal solution of the LP is a partition matrix;
that is, even when the LP fails in recovering the planted clusters, its optimal solution is still a partition
matrix and hence optimal for the original nonconvex problem. This is in sharp contrast with the SDP relaxation whose solution is not a partition matrix whenever
it fails in recovering the planted clusters.

\begin{figure}[htbp]
 \centering
  \subfigure[$m=2$, $K=2$, $n = 100$]{\label{fig1e}\epsfig{figure=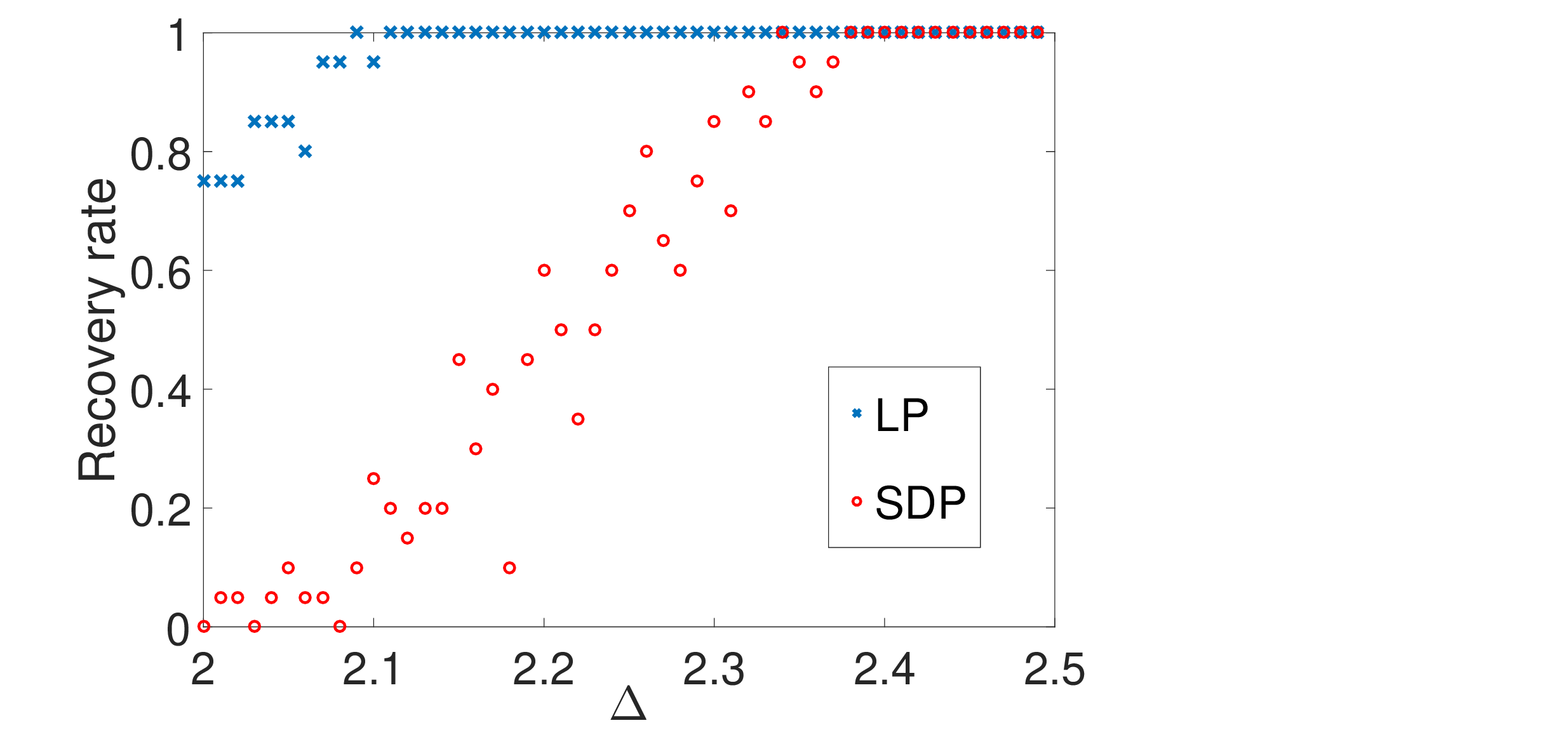, scale=0.21, trim=20mm 0mm 120mm 0mm, clip}}
 \subfigure [$m=2$, $K=3$, $n = 120$]{\label{fig1f}\epsfig{figure=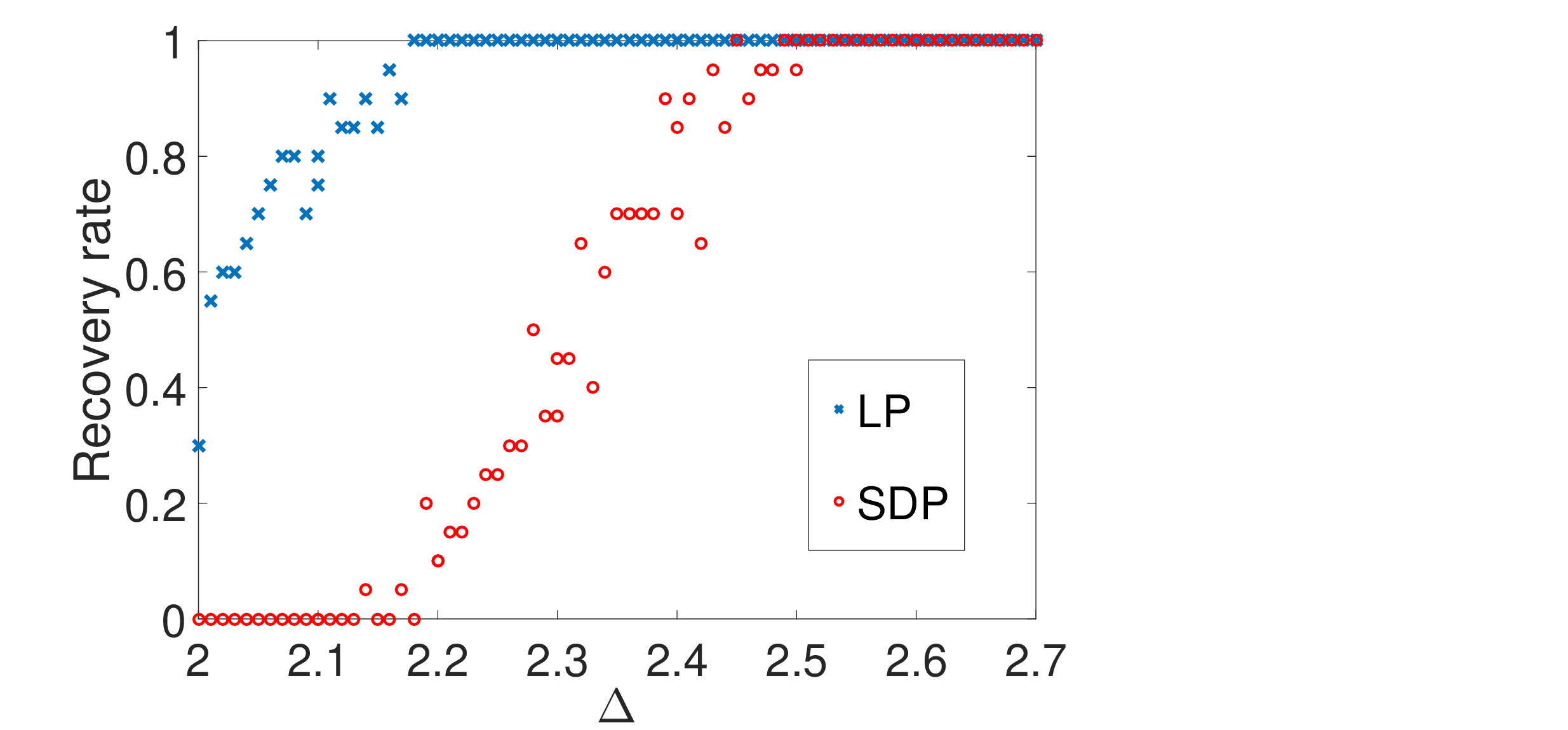, scale=0.21, trim=20mm 0mm 120mm 0mm, clip}}
 \subfigure [$m=3$, $K=2$, $n = 100$]{\label{fig1g}\epsfig{figure=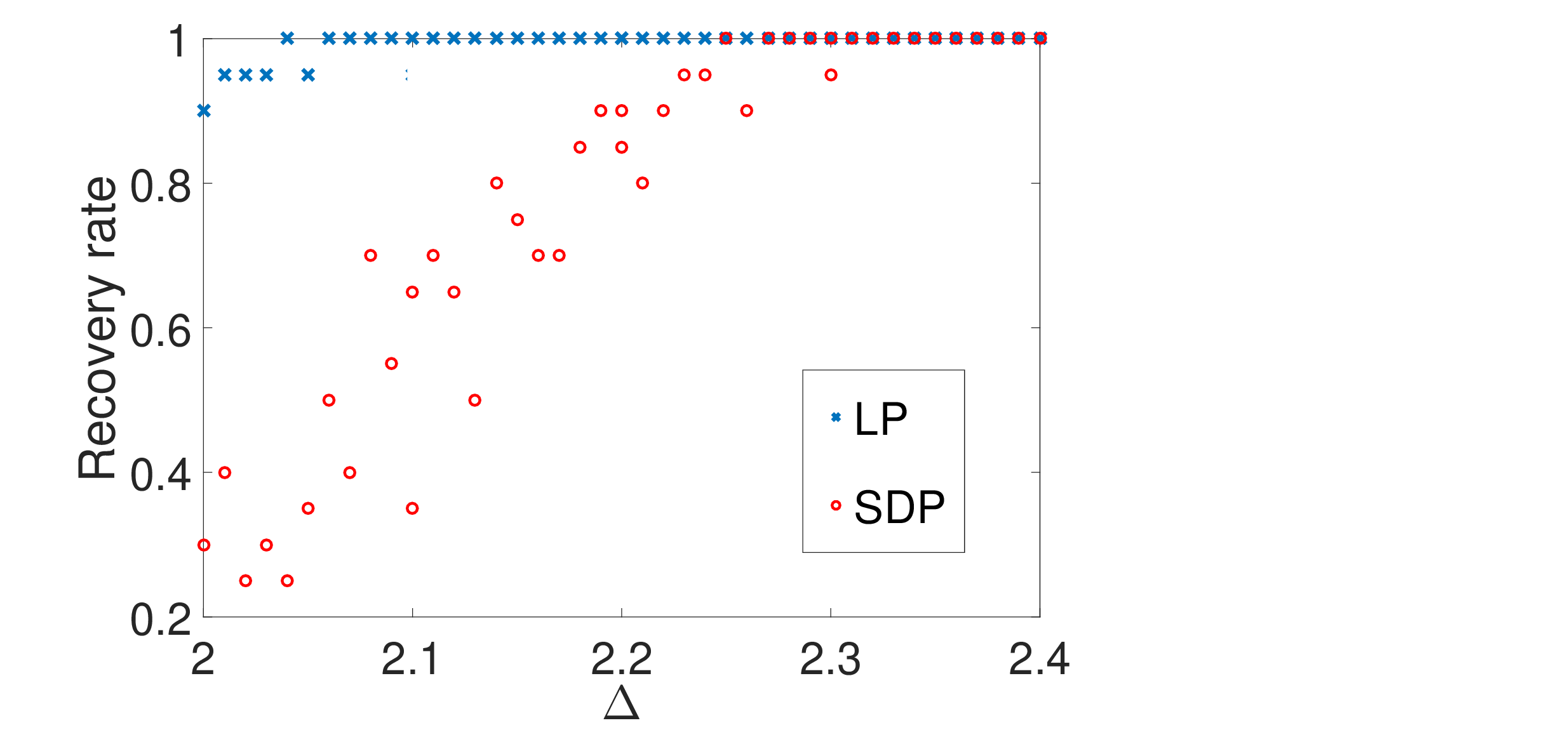, scale=0.21, trim=20mm 0mm 120mm 0mm, clip}}
 \subfigure [$m=3$, $K=3$, $n = 120$]{\label{fig1h}\epsfig{figure=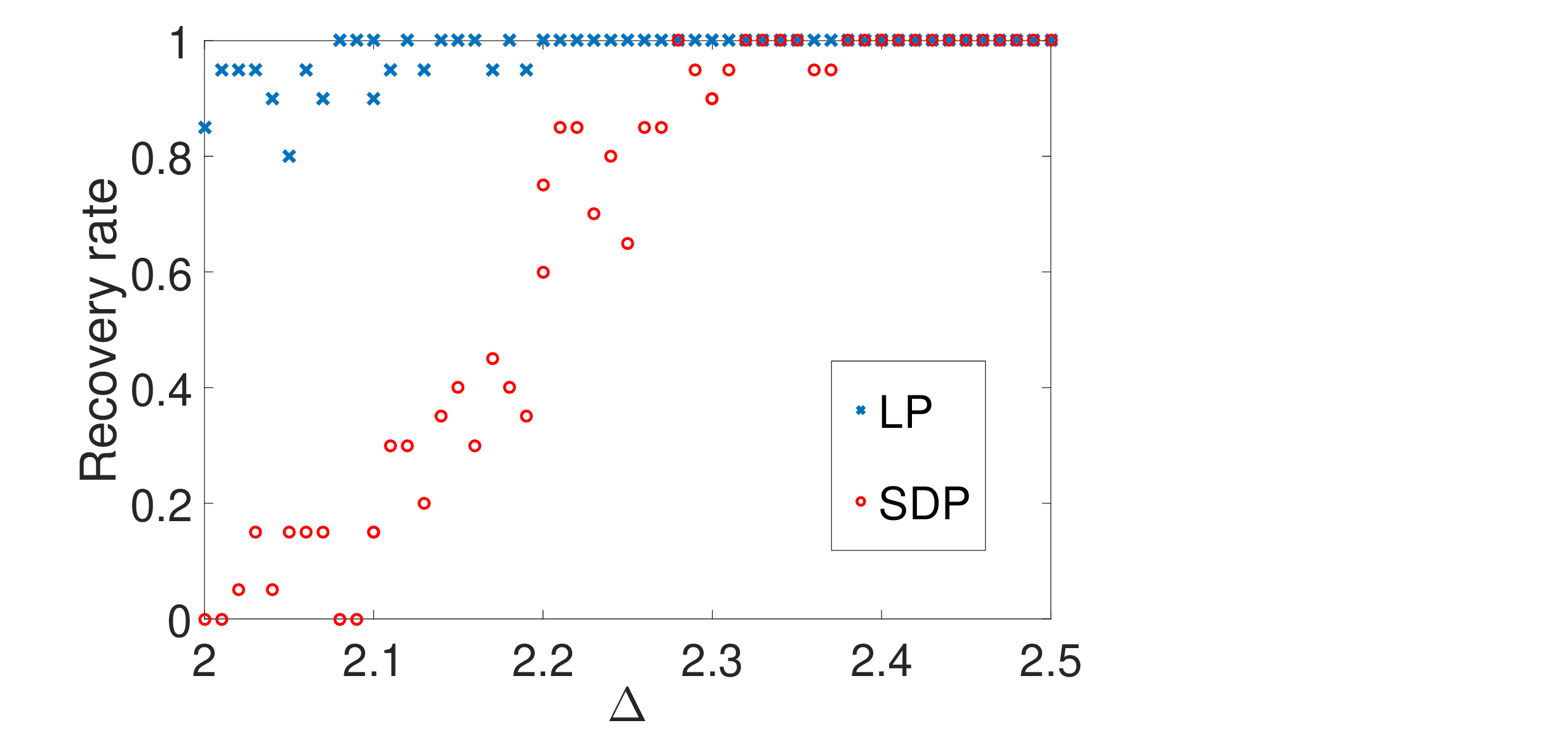, scale=0.21, trim=20mm 0mm 120mm 0mm, clip}}
 \caption{The empirical probability of success of the
LP versus the SDP in recovering the planted clusters when the points are generated according to the SBM.
}
\label{figure1}
\end{figure}

In all above experiments, we set $t = 2$; this implies that in cases with $K \geq 3$, the LP relaxation can be further strengthened
by adding inequalities of the form~\eqref{eq3k} corresponding to subsets $S$ with $|S| > 2$.
However, such a strengthening implies a significantly higher cost for the LP relaxation and requires the implementation of a clever separation algorithm. We now show that even for $K > 3$, the weakest LP relaxation, \ie Problem~\eqref{lp:pK} with $t=2$, outperforms the SDP relaxation. To this end, we set $m = 2$ and let $K \in \{4,5\}$; the points in each cluster are generated according to the SBM and the cluster centers have a hive-shaped geometry (see Figure~\ref{figure2}). As before, we set $\Delta \in [2:0.01: \bar \Delta]$ and for each fixed $\Delta$, we conduct 20 random trials. Our results are depicted in Figure~\ref{figure3}. As can be seen from these graphs, the weakest LP relaxation outperforms the SDP relaxation for both $K= 4$ and $K = 5$ clusters.

\begin{figure}[htbp]
 \centering
 \epsfig{figure=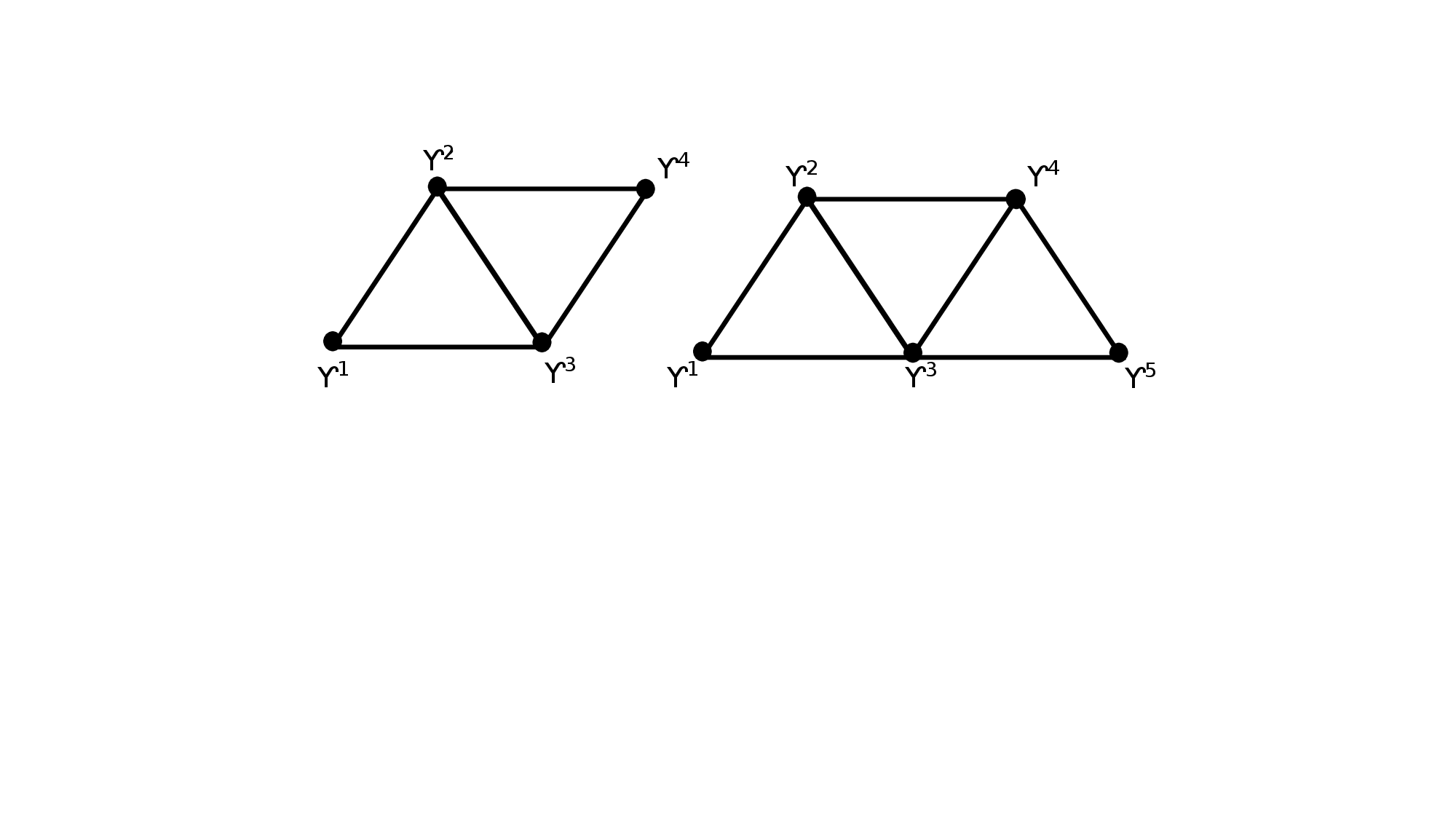, scale=0.25, trim=10mm 100mm 10mm 30mm, clip}
 \caption{Cluster centers $\gamma^k, k\in [K]$ with hive-shaped geometry. The parameter $\Delta$ is defined as the distance between two adjacent centers.}
\label{figure2}
\end{figure}

\begin{figure}[htbp]
 \centering
 \subfigure [$K=4$, $n = 100$]{\label{fig3a}\epsfig{figure=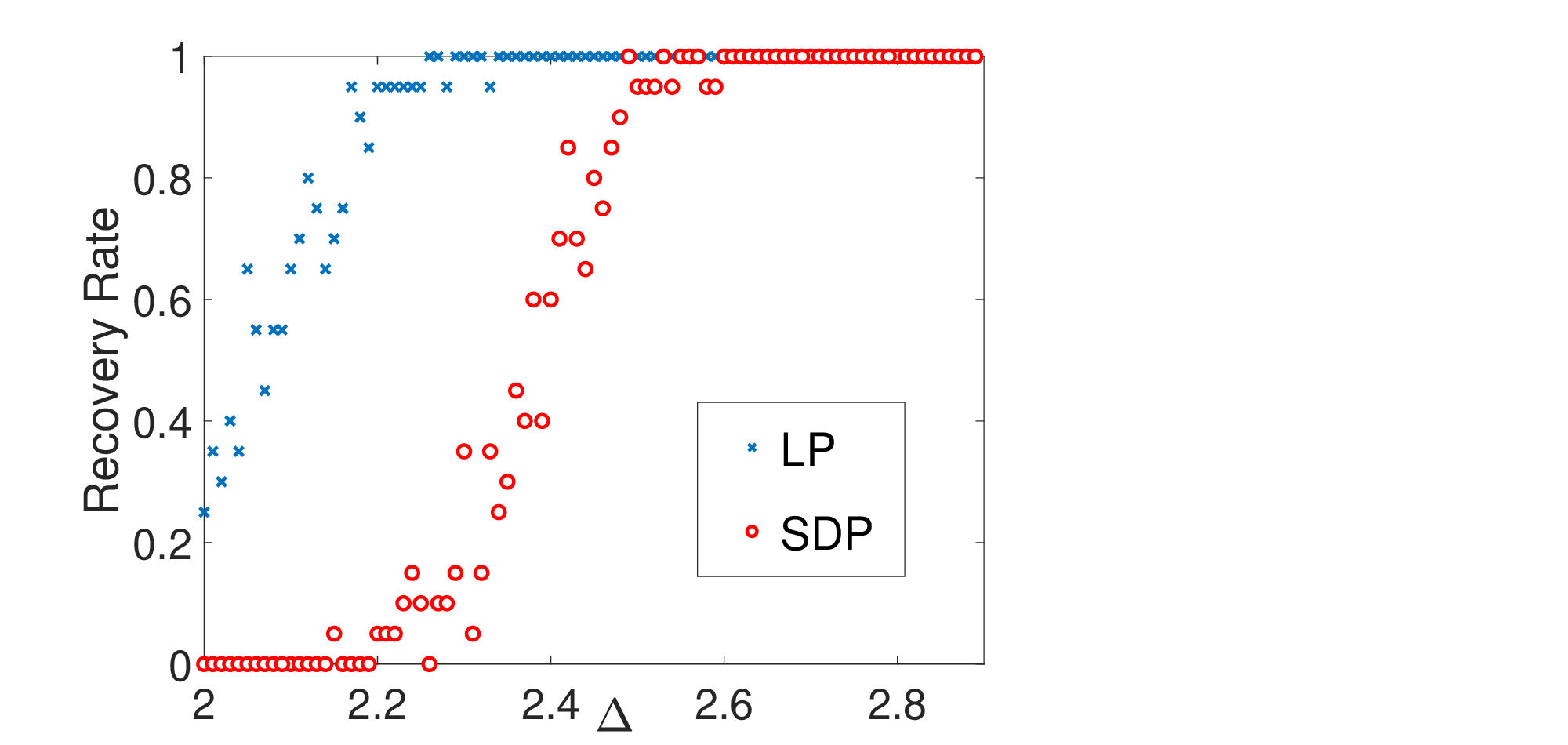, scale=0.27, trim=10mm 0mm 100mm 0mm, clip}}
 \subfigure[$K=5$, $n = 125$]{\label{fig3b}\epsfig{figure=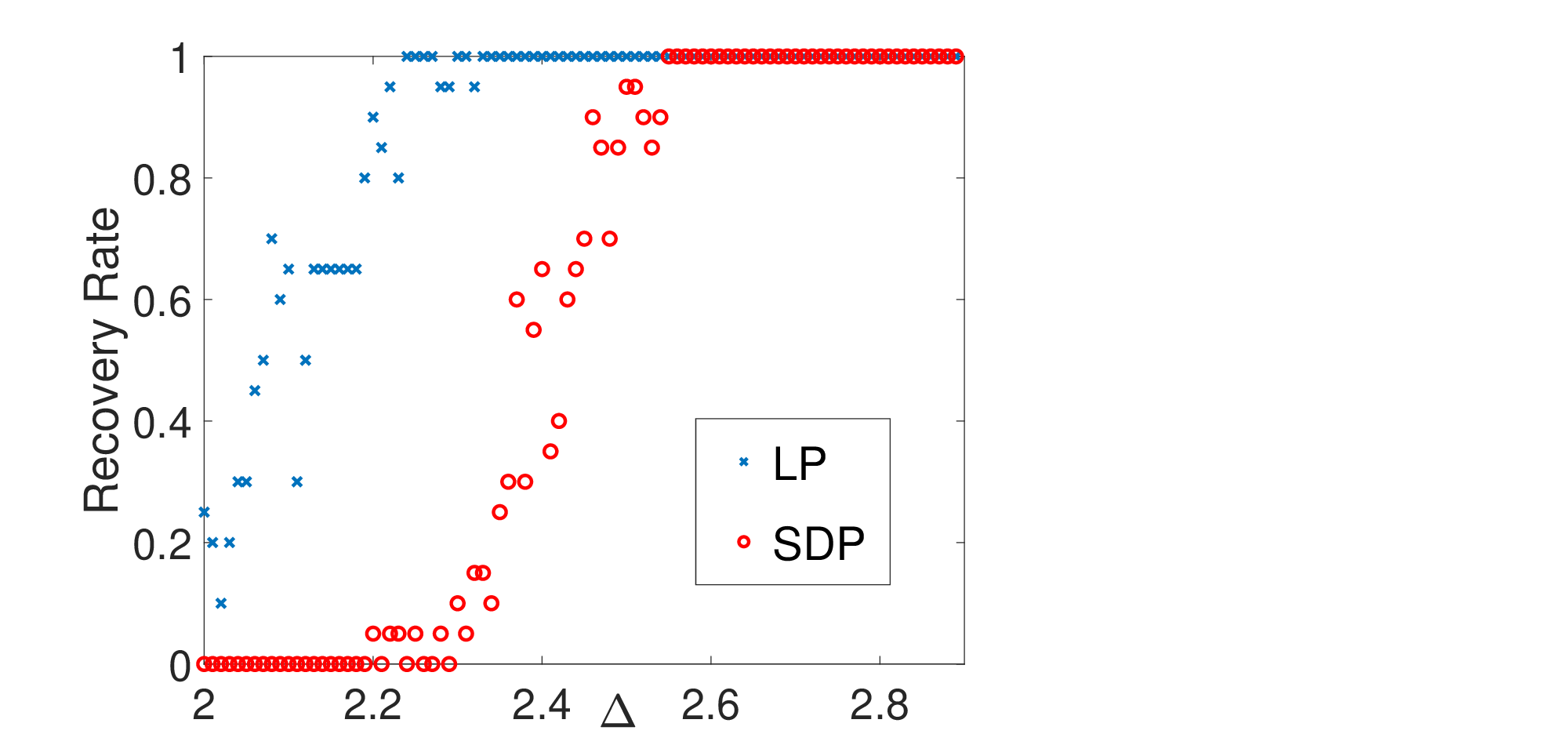, scale=0.27, trim=10mm 0mm 100mm 0mm, clip}}
 \caption{The empirical probability of success of the
weakest LP versus the SDP in recovering planted clusters for the SBM in dimension two, with $K =4$ clusters (Figure~\ref{fig3a})
and with $K =5$ clusters (Figure~\ref{fig3b}).}
\label{figure3}
\end{figure}

We now illustrate the impact of inequalities~\eqref{eq3k} on the quality of the LP relaxation via a simple numerical experiment. As we mentioned before, a careful selection of parameter $t$ requires the development of a separation algorithm that is beyond the scope of this paper. We consider two LP relaxations of K-means clustering: (i) Problem~\eqref{lp:pK} with $t = 2$, referred to as LPt2 and (ii) Problem~\eqref{lp:pK} with $t = 3$, referred to as LPt3. As before, we set $m = 2$ and we generate the points according to the SBM. We consider $K \in \{4,5\}$ where the cluster centers are chosen as shown in Figure~\ref{figure2}. To better understand the impact of inequalities~\eqref{eq3k}, in addition to recovery, we compare the~\emph{tightness} of the two LPs. That is, we say that a relaxation is tight, if the returned optimal solution is a partition matrix as defined by~\eqref{pm}. Moreover, we assume that the balls from which the points are drawn may overlap; that is, we let $\Delta \in [0:0.1: \bar \Delta]$ and for each fixed $\Delta$ we conduct 50 random trials. Indeed in practice, there is often no clear separation between the underlying clusters, and the recovery question does not make much sense; in such settings, one is interested in finding an optimal clustering (\ie an optimal partition matrix) which may or may not correspond to a planted clustering.

Our results are depicted in Figure~\ref{figure4}. Interestingly, while LPt2 and LPt3 have identical performance with respect to recovery, the addition of inequalities~\eqref{eq3k} with $|S| =3$, clearly improves the tightness of the LP relaxation. This serves as a strong motivation for developing an efficient separation algorithm for inequalities~\eqref{eq3k}. We conclude by acknowledging that in order to fully investigate the relative computational benefits of the LP relaxation versus the SDP relaxation for K-means clustering, a comprehensive numerical study on various real data sets is needed. This is indeed a subject of future research.

\begin{figure}[htbp]
 \centering
 \subfigure [$K=4$, $n = 40$]{\label{fig4a}\epsfig{figure=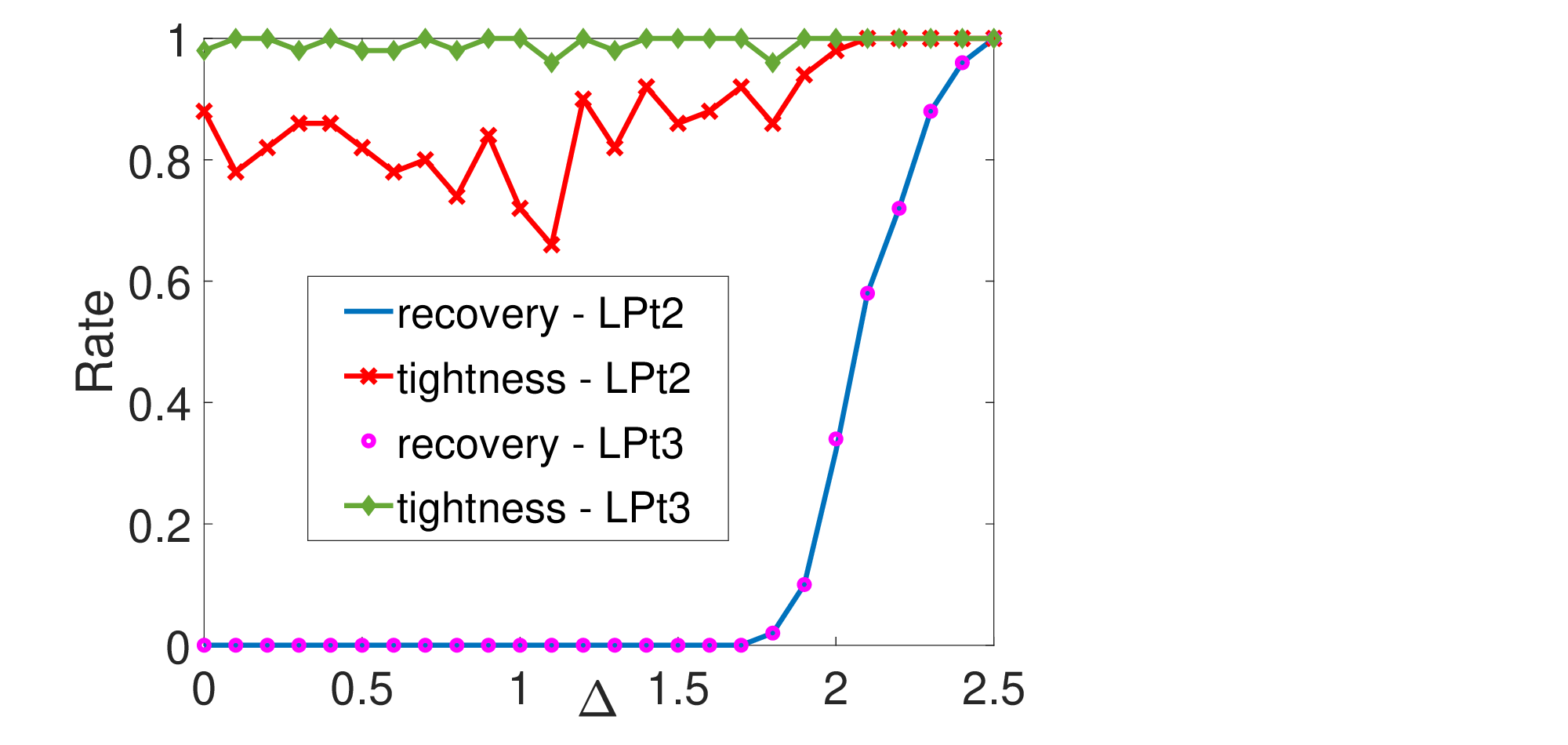, scale=0.28, trim=10mm 0mm 100mm 0mm, clip}}
 \subfigure[$K=5$, $n = 50$]{\label{fig4b}\epsfig{figure=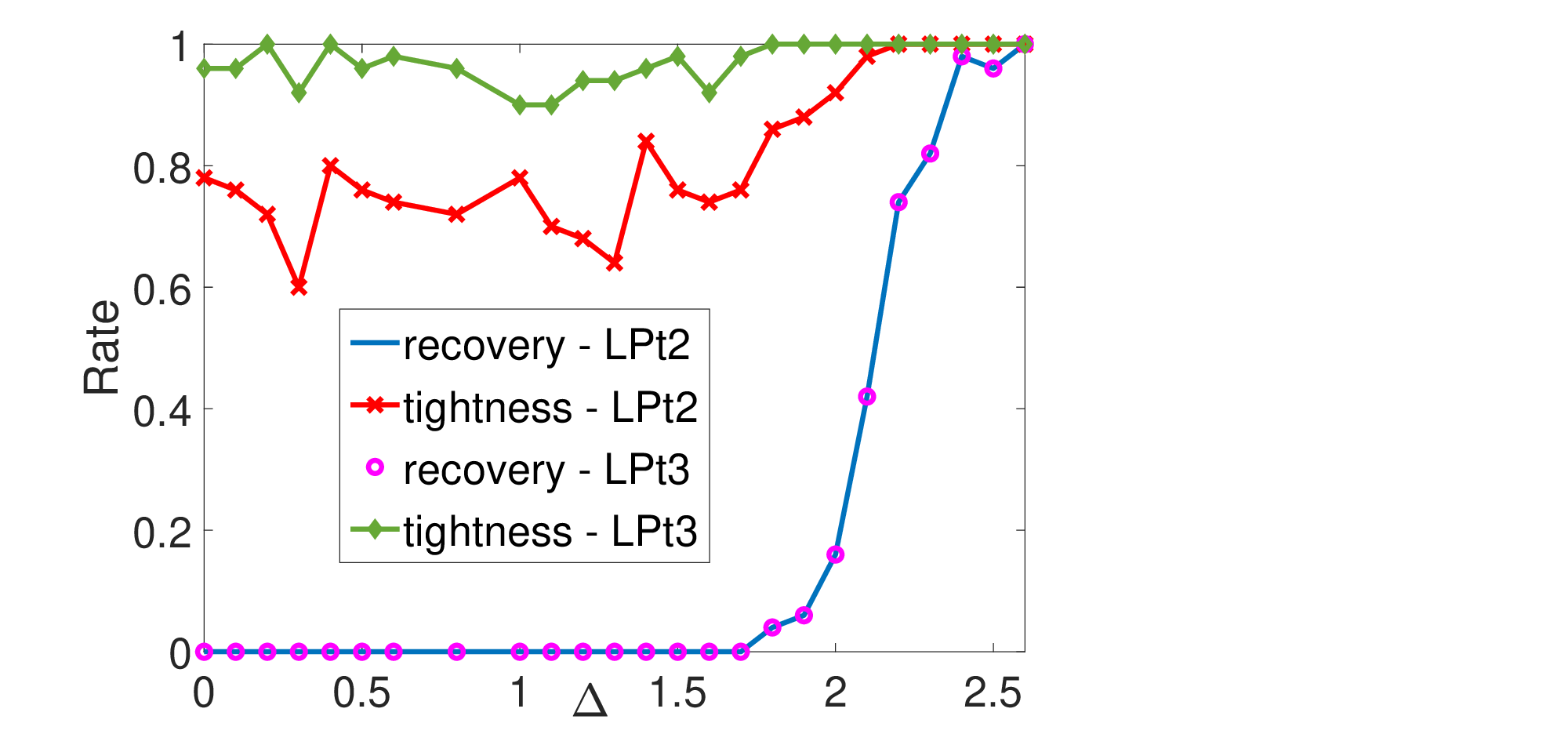, scale=0.28, trim=10mm 0mm 100mm 0mm, clip}}
 \caption{Comparing the quality of two LP relaxations, LPt2 vs LPt3 for the SBM in dimension two, with $K =4$ clusters (Figure~\ref{fig4a})
and with $K =5$ clusters (Figure~\ref{fig4b}).
}
\label{figure4}
\end{figure}

\section{Technical proofs}
\label{sec:proofs}
In this section, we present the two technical lemmas that we utilized to prove Theorem~\ref{recovery}. In the following,
for a Borel set $A\subset \R^m$ and measurable function $f:\R^m\to \R$, we define
$$\dashint_{A} f(x) d\H^{m}(x):=\frac 1{\H^m(A)}\int_{A} f(x) d\H^{m}(x).$$
Moreover, for any $x\in \R^m$, we denote by $x_i$ the $i$th component of $x$. Given two points $x,y\in \R^m$, the notation $x\parallel y$ means that $x$ and $y$ are linearly dependent.

\begin{lemma}\label{MasterOfProbability}
Suppose that the random points are generated according to the SBM.
Then the following inequality holds provided that $\Delta > 1+ \sqrt{3}$:
\begin{equation}\label{2222}
\epsilon:=\frac 13 \Big(\inf_{i,j \in \C_1}\avg^k\Big[\underset{k \in \C_2}{\dashsum}{\min\{d_{ik} + \avg_j[ \din_j] , d_{jk} + \avg_i[\din_i]\}}\Big]- d_{ij}-\avg[\eta] \Big)>0.
\end{equation}
\end{lemma}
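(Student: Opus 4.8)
The goal is to show that, under the SBM with $\Delta > 1+\sqrt 3$, the quantity $\epsilon$ defined in \eqref{2222} is strictly positive. The plan is to pass to the continuum limit: replace every empirical average over the sampled points by the corresponding integral against the uniform measure on the unit ball, and show that the resulting "population" quantity is strictly positive whenever $\Delta > 1+\sqrt 3$. Concretely, first I would fix $i,j\in\C_1$ and compute $\avg_i[\din_i]$ and $\avg_j[\din_j]$, i.e. the expected within-cluster squared distance from a fixed point $x$ in the unit ball: $\avg[\din] = \dashint_{B(0,1)} \|x-y\|^2\, d\H^m(y)$, which is a fixed constant depending only on $m$ (by rotation invariance it equals $\|x\|^2 + c_m$ for the appropriate $c_m$, and after averaging $x$ as well we get $\avg[\eta] = 2 c_m$). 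Next I would express $\avg^k\big[\dashsum_{k\in\C_2}\min\{d_{ik}+\avg_j[\din_j], d_{jk}+\avg_i[\din_i]\}\big]$ as the integral $\dashint_{B(2\Delta e_1/\|\cdot\|,1)}\min\{\|x-z\|^2 + \avg[\din],\,\|y-z\|^2+\avg[\din]\}\,d\H^m(z)$ where $x,y$ range over the first ball and the second ball is the first one translated by a vector of length $\Delta$. Since the additive constant $\avg[\din]$ is the same in both branches of the $\min$, it factors out, and the integral becomes $\avg[\din] + \dashint \min\{\|x-z\|^2,\|y-z\|^2\}\,d\H^m(z)$.

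After this reduction, proving $\epsilon > 0$ amounts to showing
\[
\inf_{x,y\in B(0,1)} \Big( \dashint_{B(\Delta e_1,1)} \min\{\|x-z\|^2,\|y-z\|^2\}\, d\H^m(z) - \|x-y\|^2 \Big) > 0,
\]
using $\avg[\eta] = 2c_m$ and $\avg_i[\din_i] = \|x\|^2 + c_m$ to cancel the remaining constants (the $c_m$'s from $\din$ cancel against those in $\eta$, leaving a clean geometric inequality plus the $\|x\|^2$ terms which only help). The worst case is when $x,y$ are antipodal on the boundary sphere aligned with the $e_1$ axis, so that $\|x-y\|^2 = 4$ is maximal and the two balls $B(x,\cdot)$, $B(y,\cdot)$ are as "spread out" as possible; here the $\min$ inside the integral effectively splits $B(\Delta e_1,1)$ into two half-balls, and a direct one-dimensional computation (projecting onto the $e_1$ axis, exactly as in Remark \ref{Lowdimension} and \cite{IduMixPetVil17,BodGeo19}) shows the difference equals a function of $\Delta$ that is positive precisely when $\Delta > 1+\sqrt 3$. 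For other $x,y$ the difference is strictly larger, so the infimum is attained on this configuration, giving $\epsilon>0$.

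The main obstacle is making the continuum-limit replacement rigorous: the infimum over $i,j\in\C_1$ is over the \emph{sampled} points, which grow dense in $B(0,1)$ as $n\to\infty$, and the conditional expectations $\avg^k,\avg_i,\avg_j$ are over the uniform measure, so I would need a uniform-in-$(x,y)$ estimate — the integrand $\min\{\|x-z\|^2,\|y-z\|^2\}$ is Lipschitz in $(x,y)$ uniformly in $z\in B(\Delta e_1,1)$ (bounded domain), hence the population function $g(x,y)$ is continuous on the compact set $\overline{B(0,1)}^2$, so its infimum over the dense sample converges to its infimum over the whole ball. The bulk of the remaining work is the explicit evaluation of the boundary-sphere/antipodal case: I would reduce it to the one-dimensional marginal by integrating out the coordinates orthogonal to $e_1$, identify the threshold $\Delta = 1+\sqrt 3$ as the value where the one-dimensional K-means cost of a single cluster equals that of the two-cluster split (the known 1-D threshold cited in Remark \ref{Lowdimension}), and check strict monotonicity so that $\Delta > 1+\sqrt 3$ yields a strict gap. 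A short compactness/continuity argument then upgrades the strict pointwise positivity to a strictly positive infimum, completing the proof.
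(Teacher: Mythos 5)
Your overall strategy (compute the conditional expectations in closed form, reduce to a deterministic geometric inequality on the two balls, identify the extremal configuration as the antipodal pair $(e_1,-e_1)$, and read off the threshold $\Delta>1+\sqrt3$) is the same as the paper's, but the reduction step contains an algebraic error that changes the inequality you end up having to prove. The two branches of the $\min$ carry \emph{different} additive terms: $\avg_j[\din_j]=\|y\|^2+c_m$ and $\avg_i[\din_i]=\|x\|^2+c_m$ with $c_m=\dashint_{\B_1}\|z\|^2\,d\H^m(z)$, so they do not ``factor out'' as a common constant. The correct cancellation, which the paper exploits, is that $\|x-z\|^2+\|y\|^2$ and $\|y-z\|^2+\|x\|^2$ share the symmetric part $\|x\|^2+\|y\|^2+\|z\|^2$, so the $\min$ collapses to $-2\max\{x^Tz,y^Tz\}$ and, after also using $\avg[\eta]=2c_m$ and $\dashint_{\B_2}\|z\|^2=\Delta^2+c_m$, the condition becomes $\max_{x,y\in\B_1}\big(\dashint_{\B_2}\max\{x^Tz,y^Tz\}\,d\H^m(z)-x^Ty\big)<\tfrac12\Delta^2$. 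Your target inequality $\inf_{x,y}\big(\dashint_{\B_2}\min\{\|x-z\|^2,\|y-z\|^2\}\,d\H^m(z)-\|x-y\|^2\big)>0$ is not equivalent: only one of the two $c_m$'s in $\avg[\eta]$ gets cancelled (the $\min$ contributes a single $c_m$, not two), and the $\|x\|^2,\|y\|^2$ terms do not merely ``help'' --- dropping them and the surviving $c_m$ changes the answer. Indeed, evaluating your expression at $(e_1,-e_1)$ gives $(\Delta-1)^2+c_m-4>0$, i.e.\ a dimension-dependent threshold $\Delta>1+\sqrt{4-c_m}$ rather than $1+\sqrt3$.

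The second, larger gap is the sentence ``For other $x,y$ the difference is strictly larger, so the infimum is attained on this configuration.'' Establishing that the extremum over $x,y\in\B_1$ is attained at the antipodal pair is the entire content of the paper's Lemma~\ref{l:condition}: a seven-step argument (slicing $\B_2$ into spheres and pairing reflected points, reduction to dimension two, reduction from disks to circles via bilinearity, a critical-point analysis in angular coordinates, and several explicit boundary cases each requiring its own estimate). The function $\dashint_{\B_2}\max\{x^Tz,y^Tz\}\,d\H^m(z)-x^Ty$ is not obviously maximized at $(e_1,-e_1)$ --- for instance the term $-x^Ty$ alone is maximized on \emph{any} antipodal pair, and ruling out off-axis configurations is genuinely delicate. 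Asserting the extremal configuration without proof leaves the main difficulty of the lemma unaddressed. (Your continuity/density argument for passing from the sampled points to the ball is unnecessary: the infimum over the sample is trivially bounded below by the infimum over all of $\B_1$, which is the quantity one shows to be positive.)
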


\begin{proof}
Denote by $\B_1$ and $\B_2$ the balls corresponding to the first and second clusters, respectively.
Up to a rotation we can assume that the center of $\B_1$ and  $\B_2$  are $0$ and $\Delta e_1$, respectively.
For notational simplicity, we denote the $i$th (resp. $j$th) point in $\B_1$ by $x$ (resp. $y$).
By Lemma \ref{l:condition}, it suffices to show that inequality~\eqref{2222} can be equivalently written as:
\begin{equation}\label{0000}
\max_{x,y \in \B_1}\dashint_{\B_2} \max\{x^Tz, y^Tz\} d\H^{m}(z) -x^Ty< \frac 12\Delta^2.
\end{equation}
First, notice that for any $i \in \B_1$ we have
\begin{equation}\label{0}
\begin{split}
\avg_i[\din_i]&=\dashint_{\B_1}\|x-z\|^2 d\H^{m}(z)= \|x\|^2+\dashint_{\B_1}\|z\|^2d\H^{m}(z)-2x^T\dashint_{\B_1}zd\H^{m}(z)\\
& =\|x\|^2+\dashint_{\B_1}\|z\|^2d\H^{m}(z).
\end{split}
\end{equation}
By symmetry, the same calculation holds for $\avg_i[\din_i]$ with $i \in \B_2$.
By~\eqref{0}, we have
\begin{equation}\label{p}
\begin{split}
\avg\big[\underset{k \in \B_1}{\dashsum}{\din_k}\big]&=\avg\big[\underset{k \in \B_2}{\dashsum}{\din_k}\big]\\
&=\dashint_{\B_1}\left(\|z\|^2+\dashint_{\B_1}\|w\|^2d\H^{m}(w) \right)d\H^{m}(z)=2\dashint_{\B_1}\|z\|^2d\H^{m}(z).
\end{split}
\end{equation}
Hence, by \eqref{0} and \eqref{p}, inequality~\eqref{2222} reads
$$
\min_{x,y \in \B_1}\dashint_{\B_2} \min\{\|x-z\|^2+\|y\|^2, \|y-z\|^2+\|x\|^2\} d\H^{m}(z) - \|x-y\|^2> \dashint_{\B_1}\|z\|^2d\H^{m}(z),
$$
which expanding the squares gives
\begin{equation}\label{000}
\min_{x,y \in \B_1}\dashint_{\B_2}\|z\|^2+ \min\{-2x^Tz, -2y^Tz\} d\H^{m}(z) +2x^Ty> \dashint_{\B_1}\|z\|^2d\H^{m}(z).
\end{equation}
Via a change of variables
\begin{equation*}
\begin{split}
\dashint_{\B_2}\|z\|^2&d\H^{m}(z) = \dashint_{\B_1}\|\Delta e_1 +z\|^2d\H^{m}(z)=\dashint_{\B_1}\|\Delta e_1\|^2+\|z\|^2+2\Delta e_1^Tzd\H^{m}(z)\\
&=\Delta^2+\dashint_{\B_1}\|z\|^2d\H^{m}(z)+2\Delta e_1^T\dashint_{\B_1}zd\H^{m}(z)=\Delta^2 +\dashint_{\B_1}\|z\|^2d\H^{m}(z),
\end{split}
\end{equation*}
hence \eqref{000} reads
\begin{equation*}
\min_{x,y \in \B_1}\dashint_{\B_2} \min\{-2x^Tz, -2y^Tz\} d\H^{m}(z) +2x^Ty> -\Delta^2,
\end{equation*}
which is equivalent to \eqref{0000}.
\end{proof}

\begin{lemma}\label{l:condition}
Inequality \eqref{0000} holds if and only if $\Delta > 1+ \sqrt{3}$.
\end{lemma}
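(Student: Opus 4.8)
The plan is to compute the maximum over $x,y\in\B_1=B(0,1)$ of
$$F(x,y):=\dashint_{\B_2}\max\{x^Tz,y^Tz\}\,d\H^m(z)-x^Ty,\qquad \B_2=B(\Delta e_1,1),$$
and show it equals exactly $\Delta+1$. Granting this, \eqref{0000} reads $\Delta+1<\tfrac12\Delta^2$, i.e.\ $\Delta^2-2\Delta-2>0$, whose only positive root is $1+\sqrt3$, so \eqref{0000} holds if and only if $\Delta>1+\sqrt3$. The bound $\max F\ge\Delta+1$ — which already yields the ``only if'' direction — is immediate: for $x=e_1$, $y=-e_1$ one has $\max\{x^Tz,y^Tz\}=|z_1|=z_1$ on $\B_2$ since $z_1\ge\Delta-1>0$, and as the barycenter of $\B_2$ is $\Delta e_1$ we get $F(e_1,-e_1)=\dashint_{\B_2}z_1\,d\H^m(z)+1=\Delta+1$.

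For the upper bound I would first use $\max\{a,b\}=\tfrac12(a+b)+\tfrac12|a-b|$ together with $\dashint_{\B_2}z\,d\H^m(z)=\Delta e_1$ to write
$$F(x,y)=\frac{\Delta(x_1+y_1)}2+\frac12\dashint_{\B_2}\bigl|(x-y)^Tz\bigr|\,d\H^m(z)-x^Ty.$$
Setting $v:=x-y$ and using the rotational invariance of $\H^m\res B(0,1)$, for $v\ne0$ the variable $v^Tz$ (with $z$ uniform on $\B_2$) is distributed as $\Delta v_1+|v|\,T$, where $T$ is the first coordinate of a uniform point in $B(0,1)$, a symmetric law on $[-1,1]$. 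Hence $\dashint_{\B_2}|(x-y)^Tz|\,d\H^m(z)=|v|\,h_m(\Delta v_1/|v|)$ with $h_m(c):=\avg|c+T|$. The key elementary estimate is $h_m(c)\le\max\{|c|,1\}$: the map $c\mapsto h_m(c)$ is convex and even, it equals $|c|$ for $|c|\ge1$ (then $c+T$ has constant sign), and $h_m(\pm1)=1$ (for instance $h_m(1)=\avg[1+T]=1$), so convexity forces $h_m\le1$ on $[-1,1]$. Therefore $\dashint_{\B_2}|(x-y)^Tz|\,d\H^m(z)\le\max\{\Delta|x_1-y_1|,\,|x-y|\}$.

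Next I would split into two cases. If $\Delta|x_1-y_1|\ge|x-y|$, then $F\le\tfrac{\Delta(x_1+y_1)}2+\tfrac{\Delta|x_1-y_1|}2-x^Ty=\Delta\max\{x_1,y_1\}-x^Ty\le\Delta\cdot1+1=\Delta+1$, using $\max\{x_1,y_1\}\le1$ and $-x^Ty\le|x|\,|y|\le1$. If $\Delta|x_1-y_1|<|x-y|$, then $F\le\tfrac{\Delta(x_1+y_1)}2+\tfrac{|x-y|}2-x^Ty$; writing $\sigma:=|x+y|$, $\nu:=|x-y|$ we have $\sigma^2+\nu^2=2(|x|^2+|y|^2)\le4$, $x_1+y_1\le\sigma$ and $x^Ty=\tfrac14(\sigma^2-\nu^2)$, and since $\sigma\mapsto\tfrac\Delta2\sigma-\tfrac14\sigma^2$ is increasing on $[0,2]$ (as $\Delta>2$) I may replace $\sigma$ by $\sqrt{4-\nu^2}$, obtaining $F\le\psi(\nu/2)$ with $\psi(\beta):=\Delta\sqrt{1-\beta^2}+2\beta^2+\beta-1$. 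It then remains to check $\psi(\beta)\le\Delta+1$ for $\beta\in[0,1]$, i.e.\ $\Delta\bigl(1-\sqrt{1-\beta^2}\bigr)\ge2\beta^2+\beta-2$: this is trivial when $2\beta^2+\beta-2\le0$, and otherwise, using $1-\sqrt{1-\beta^2}=\beta^2/(1+\sqrt{1-\beta^2})$ and $2\beta^2+\beta-2\le\beta$, the right-hand side divided by $1-\sqrt{1-\beta^2}$ is at most $\tfrac{1+\sqrt{1-\beta^2}}{\beta}\le\tfrac2\beta\le\tfrac2{\beta_0}=\tfrac{1+\sqrt{17}}2<1+\sqrt3$, where $\beta_0=\tfrac{\sqrt{17}-1}4$ is the positive root of $2\beta^2+\beta-2$; hence the inequality holds for every $\Delta>1+\sqrt3$.

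Combining the two bounds gives $\max_{x,y\in\B_1}F(x,y)=\Delta+1$, and the stated equivalence follows. The main obstacle is the upper bound $F(x,y)\le\Delta+1$: everything rests on the sharp estimate $h_m(c)\le\max\{|c|,1\}$, which is what makes the threshold dimension-free, together with the short one-variable optimization for $\psi$, where one must verify that the numerical bound it produces is strictly below $1+\sqrt3$ so that the argument covers precisely the advertised range of $\Delta$.
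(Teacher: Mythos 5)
Your argument is correct, but it reaches the conclusion by a genuinely different route than the paper. The paper proves $\max_{x,y\in\B_1}F(x,y)=F(e_1,-e_1)=\Delta+1$ through a seven-step geometric reduction: slicing the balls into concentric spheres, pairing points of $\B_2$ symmetric about the $e_1$-axis, reducing from $\R^m$ to the plane and then from disks to circles (via bilinearity of $I$ over a box), and finally a polar-coordinate case analysis on the circle, locating critical points through $\sin(\theta-\gamma)=-\sin(\eta+\gamma)$ and checking each boundary arc with ad hoc secant-line estimates. You instead linearize the maximum via $\max\{a,b\}=\tfrac12(a+b)+\tfrac12|a-b|$, which together with the barycenter identity $\dashint_{\B_2}z\,d\H^m(z)=\Delta e_1$ isolates the single quantity $\dashint_{\B_2}|(x-y)^Tz|\,d\H^m(z)$; the convexity/evenness estimate $h_m(c)=\avg|c+T|\le\max\{|c|,1\}$ for the first marginal of the uniform law on the unit ball (symmetric, supported on $[-1,1]$, mean zero) then collapses everything to a two-case scalar optimization, and I have checked that both cases and the final one-variable inequality for $\psi$ go through, with $2/\beta_0=(1+\sqrt{17})/2\approx2.56<1+\sqrt3$. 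Your proof is substantially shorter, avoids all the slicing and angular casework, and makes the dimension-independence of the threshold transparent (only the symmetry and support of the one-dimensional marginal are used, so the same argument covers spheres or any rotation-invariant distribution on the unit ball); the paper's reduction is more geometric and is designed to reuse the Bodmann--George sphere results as a black box. If you write this up, state explicitly that the upper bound $\max F\le\Delta+1$ is only established for $\Delta\ge(1+\sqrt{17})/2$ (Case 2 needs $\Delta\ge 2/\beta_0$ and the $\sigma$-monotonicity needs $\Delta\ge2$), which suffices because $1+\sqrt3$ exceeds this value, while the lower bound $F(e_1,-e_1)\ge\Delta+1$ driving the ``only if'' direction needs no restriction on $\Delta$ since $|z_1|\ge z_1$ always.
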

\begin{proof}
We will prove that the maximum of the left-hand side of inequality~\eqref{0000} over all $x, y \in \B_1$ is attained at $(e_1,-e_1)$.
This in turn implies that inequality~\eqref{0000} is satisfied if and only if
$$
\Delta+1=\dashint_{\B_2} z_1 d\H^{m}(z) +1< \frac 12\Delta^2,
$$
which is true if and only if $\Delta > 1 + \sqrt{3}$; \ie the desired condition.

Define
$$F(x,y):=\dashint_{\B_2}\max\{x^Tz,y^Tz\} -x^Ty d\H^{m}(z).$$
Our goal is to show that
\begin{equation}\label{p10}
 \max_{x,y\in \B_1} F(x,y) = F(e_1,-e_1).
\end{equation}
We divide the proof in several steps:

\smallskip

\noindent
{\em {\bf Step 1}. Slicing:

Let $z, w$ be any pair of points in $\B_2$ satisfying $z_1=w_1$, $z_2=-w_2\geq 0$,  $z_j=w_j=0$ for all $j \in \{3,\ldots,m\}$. In the special case $m=1$, we consider $z=w\in \B_2$.
 Define
$$
H(x,y):=\left\{ \frac 12 \max\{x^Tz,y^Tz\} + \frac 12 \max\{x^Tw,y^Tw\} -x^Ty \right\}.
$$
Then \eqref{p10} holds if the following holds
\begin{equation}\label{p20}
\max_{x,y\in \B_1}H(x,y)=H(e_1,-e_1).
\end{equation}
}
{\em Proof of Step 1.} Since
$$F(x,y)=\frac 1{\H^{m}(\B_2)}\int_{\Delta-1}^{\Delta+1} \int_{\{z_1=s\}\cap \B_2}\max\{x^Tz,y^Tz\} -x^Ty d\H^{m-1}(z)ds,$$
to show \eqref{p10} it is enough to show that the function
$$G(x,y):= \int_{\{z_1=s\}\cap \B_2}\max\{x^Tz,y^Tz\} -x^Ty d\H^{m-1}(z)$$
is maximized in $x=e_1$, $y=-e_1$, for every $s \in [\Delta-1,\Delta+1].$
Denoting $A:=\{z_1=s,\, z_2\geq 0\}\cap \B_2$, then
$$\frac 12G(x,y)= \int_{A}\frac 12\max\{x^Tz,y^Tz\}  + \frac 12\max\{x^T(2se_1-z),y^T(2se_1-z)\} -x^Ty d\H^{m-1}(z).$$
Hence, it is enough to prove that for every $s \in [\Delta-1,\Delta+1]$ and for every $z\in A$,
\begin{equation}\label{p30}
\max_{x,y\in \B_1}\left \{\frac 12 \max\{x^Tz,y^Tz\} + \frac 12 \max\{x^T(2se_1-z),y^T(2se_1-z)\} -x^Ty \right\},
 \end{equation}
 is achieved at $(e_1,-e_1)$.
 Since Problem \eqref{p30} is invariant under a rotation of the space around the axis generated by $e_1$,  we conclude that solving Problem \eqref{p30} is equivalent to solving Problem \eqref{p20}.

\noindent
{\em {\bf Step 2}. Symmetric distribution of the maxima:

Let $z,w$ be any pair of points as defined in Step 1.
Define
$$I(x,y):=\frac 12 x^Tz +\frac 12 y^Tw -x^Ty.$$
In order to show that~\eqref{p20} holds, it suffices to prove that for $m\geq 2$
\begin{equation}\label{p40}
\mathop{\max_{x,y\in \B_1}}_{x^Tz\geq y^Tz,\, y^Tw \geq x^Tw }\{I(x,y) \} \leq H(e_1,-e_1)=z_1+1.
\end{equation}
}
{\em Proof of Step 2.}  We first analyze the special case $m=1$. For every $z\in \B_2$, we have that $H(x,y)=\max\{xz,yz\} -xy$. The maximum for $x,y\in [-1,1]$ of the multilinear function $H$ must be attained at the boundary point $x=1$, $y=-1$. This complesetes the proof of Lemma \ref{l:condition} for $m=1$.

From now on, we focus our attention on the case $m\geq 2$.
Assume by contradiction that~\eqref{p40} holds, but~\eqref{p20} does not hold. Then there exists $z,w\in \B_2$, $z_1=w_1$, $z_2=-w_2$ and $z_j=w_j=0$ for all $j \in \{3,\dots,m\}$ and $\bar x,\bar y\in \B_1$ such that $\bar x^T z\geq \bar y^Tz,  \bar x^Tw  \geq \bar y^Tw$ and $H(\bar x,\bar y)>H(e_1,-e_1)$. We deduce that
\begin{equation}\label{minore0}
\begin{split}
H(e_1,-e_1)&<H(\bar x,\bar y)=\frac 12 \bar x^Tz + \frac 12 \bar x^Tw -\bar x^T\bar y\leq \max_{x,y\in \B_1}\frac 12  x^Tz + \frac 12  x^Tw -x^Ty\\
&=\max_{x\in \B_1}\frac 12 x^Tz +\frac 12 x^Tw +\|x\|.
\end{split}
\end{equation}
Since the function $\frac 12 x^Tz +\frac 12 x^Tw +\|x\|$ is convex in $x$, then
\begin{equation}\label{minore1}
\max_{x\in \B_1}\frac 12 x^Tz +\frac 12 x^Tw +\|x\|=\max_{x\in \partial \B_1}\frac 12 x^Tz +\frac 12 x^Tw +\|x\|=\max_{x\in \partial \B_1}\frac 12 x^Tz +\frac 12 x^Tw +1.
\end{equation}
The maximization problem on the right hand side of \eqref{minore1}
has critical points satisfying
\begin{equation}\label{ELE}
z + w +2\lambda x=0.
\end{equation}
Since $z,w\in \B_2$, $z_1=w_1$, $z_2=-w_2\geq 0$ and $z_j=w_j=0$ for every $j=3,\dots,m$, then equation \eqref{ELE}  implies that $x\parallel e_1$.
We deduce that any maximum point of \eqref{minore1} satisfies $x=te_1$, with $t\in[-1,1]$:
\begin{equation}\label{minore2}
\max_{x\in \partial \B_1}\frac 12 x^Tz +\frac 12 x^Tw +1=\max_{t\in[-1,1]}tz_1 +1,
\end{equation}
and the maximum point of~\eqref{minore2} is attained at $t=1$, since $z_1>0$.
Combining \eqref{minore0}, \eqref{minore1} and \eqref{minore2}, we deduce the contradiction $H(e_1,-e_1)<z_1+1=H(e_1,-e_1)$.

\noindent
{\em {\bf Step 3}. Reduction from balls to disks:

To show the validity of~\eqref{p40}, we can restrict to dimension $m=2$.}

{\em Proof of Step 3.}  We fix $z=(z_1,z_2,0,\dots,0)$ and $w=(z_1,-z_2,0,\dots,0)$.
Denote $x=(x_1,x')$ and $y=(y_1,y')$, where $x':=(x_2,\dots,x_m)$ and $y':=(y_2,\dots,y_m)$. We will use the same notation also for $z,w$.
Moreover denote $\tilde x=(x_1, \tilde x')$ and $\tilde y=(y_1, \tilde y')$, where $\tilde x':= (\tilde x_2,0,\dots,0)$, $\tilde y':= (\tilde y_2,0,\dots,0)$, such that
$\|x'\|=\|\tilde x'\|$ and $\|y'\|=\|\tilde y'\|$, $\tilde x_2\geq 0$ and $\tilde y_2\leq 0$.
To prove the claim, it suffices to show that the maximum in~\eqref{p40} is attained at $x,y \in \mbox{span}\{ e_1, e_2 \}$. To this end, it is enough to show that $I(x,y)  \leq I(\tilde x,\tilde y)$,
which is equivalent to
$$\frac 12 (x')^Tz' +\frac 12 (y')^Tw' -(x')^Ty'  \leq \frac 12 (\tilde x')^Tz' + \frac 12 (\tilde y')^Tw' -(\tilde x')^T\tilde y'.$$
In turn, this inequality is valid because
\begin{itemize}
\item [(i)] by definition $\tilde x' \parallel  z'$, $\tilde y' \parallel  w'$, $\tilde x_2\geq 0$, $\tilde y_2\leq 0$ and $z_2\geq 0$; then  we have $(x')^Tz'\leq (\tilde x')^Tz'$ and $(y')^Tw' \leq (\tilde y')^Tw'.$
\item [(ii)] by definition $\|x'\|=\|\tilde x'\|$, $\|y'\|=\|\tilde y'\|$, $\tilde x_2\geq 0$, $\tilde y_2\leq 0$, and  $\tilde x' \parallel  \tilde y'$; then we have $(x')^Ty' \geq (\tilde x')^T\tilde y'.$
\end{itemize}

\noindent
{\em {\bf Step 4}. Reduction from disks to circles:

In order to prove Problem \eqref{p40}, it is enough to show that
\begin{equation}\label{p50}
\mathop{\max_{x,y\in \partial \B_1}}_{x^Tz\geq y^Tz,\, y^Tw \geq x^Tw }I(x,y) \leq H(e_1,-e_1)=z_1+1,
\end{equation}
for every $z\in \B_2$, $z_1=w_1$ and $z_2=-w_2\geq 0$.
}

{\em Proof of Step 4.}  Fix $x,y\in \B_1$. To prove this step, it is enough to find $x',y'\in \partial \B_1$ such that $I(x',y')\geq I(x,y)$.
Let us denote $\bar x= x/\|x\|$ if $x\neq 0$ and $\bar x=e_1$ if $x=0$. Let us denote $\bar y= y/\|y\|$ if $y\neq 0$ and $\bar y=e_1$ if $y=0$. Since $x,y,z,w$ are fixed, we define the constants $a=\frac 12 \bar x^Tz$, $b=\frac 12 \bar y^Tw$ and $c=\bar x^T \bar y$.
We consider the problem
\begin{equation*}
\mathop{\max_{(r_1,r_2)\in [-1,1]^2}}I(r_1\bar x,r_2 \bar y)=\mathop{\max_{(r_1,r_2)\in [-1,1]^2}}r_1a+r_2b-r_1r_2c.
\end{equation*}
It is well-known that the maximum of a bilinear function over a box is attained at a vertex of the box and this completes the proof.

\noindent
{\em {\bf Step 5}. Symmetric local maxima:

For any pair $x,y\in \partial \B_1$ of the form $x_1 = y_1$ and $x_2 = -y_2$, we have
$$I(x,y)\leq H(e_1,-e_1).$$}
{\em Proof of Step 5.}
Given such symmetric pair $(x,y)$, the objective function evaluates to
$I(x,y)= z_1 x_1 + z_2 x_2 -x_1^2 + x_2^2$.  Using $x^2_1 + x^2_2 = 1$ and  $z_2 \sqrt{1-x^2_1} \leq z_2$, it suffices to show that
\begin{equation}\label{eqA1}
z_2 \leq  2 x_1^2 - z_1 x_1 + z_1, \quad \forall  x_1 \in [-1,1].
\end{equation}
Since the function $\hat f(x_1):=2 x_1^2 - z_1 x_1 + z_1$ on the right hand side of~\eqref{eqA1} is a convex parabola in $x_1$,
its minimum is either attained at one of the end points or at $\tilde x_1 = \frac{z_1}{4}$, provided
that $-1 \leq \frac{z_1}{4} \leq 1$. Since $\Delta -1 \leq z_1 \leq \Delta + 1$, the point $\tilde x_1$ lies in the domain only if
$\Delta-1 \leq z_1 \leq \min\{4, \Delta + 1\}$.
The value of $\hat f$ at $x_1 = -1$ and $x_1 = 1$ evaluates to $2 + 2 z_1$ and $2$, respectively,
both of which are bigger than $z_2$. Hence it remains to show that
$$
\sqrt{1- u^2} \leq (u + \Delta) - \frac{(u+\Delta)^2}{8}, \quad -1 \leq u \leq \min\{4-\Delta, 1\},
$$
where we set $u := z_1 - \Delta$ and we use that $z_2\leq \sqrt{1-u^2}$. Since $u + \Delta \leq 4$, the right hand side of the above inequality is increasing in $\Delta$;
hence it suffices to show its validity at $\Delta = 2$; \ie
$$
\sqrt{1- u^2} \leq (u + 2) - \frac{(u + 2)^2}{8}, \quad -1 \leq u \leq 1.
$$
The right-hand side of the above inequality is concave and hence is lower bounded by its secant line through the boundary points $(-1,7/8)$, $(1,15/8)$; hence
it suffices to show that
$\sqrt{1- u^2} \leq \frac{1}{2} (u+ \frac{11}{4})$ for all $-1 \leq u \leq 1$.
Squaring both sides and rearranging the terms, the above inequality can be equivalently written as
$u^2 + \frac{11}{10} u + \frac{57}{80} \geq 0$, where $-1 \leq u \leq 1$. It can be shown that the minimum of the left hand side
of this inequality is attained at $u = -\frac{11}{20}$ and is equal to $0.41$ and this completes the proof.

\noindent
{\em {\bf Step 6}. Decomposition of the circle:

To solve Problem~\eqref{p50}, it suffices to solve
\begin{equation}\label{p6}
\mathop{\max_{x,y\in  \partial \B_1\cap \{x_1\leq 0,\, y_1 \geq 0\}}}_{x^Tz\geq y^Tz, \, y^Tw \geq x^Tw }I(x,y) \leq H(e_1,-e_1)=z_1+1,
\end{equation}}
for every $z\in \B_2$, $z_1=w_1$, $z_2=-w_2\geq 0$.

{\em Proof of Step 6.}  We first consider the case when $x_1\leq 0$ and $y_1\leq 0$. Since $z_1>0$, then $x_1z_1\leq -x_1z_1$, $y_1z_1\leq -y_1z_1$. We deduce that $I(x,y)\leq I((-x_1,x_2),(-y_1,y_2))$.

Now, since the case $\{x_1\geq 0,y_1 \leq 0\}$ is symmetric to the case $ \{x_1\leq 0,y_1 \geq 0\}$, we just need to show that
$$\mathop{\max_{x,y\in  \partial \B_1\cap \{x_1\geq 0,\, y_1 \geq 0\}}}_{x^Tz\geq y^Tz, \, y^Tw \geq x^Tw }I(x,y) \leq H(e_1,-e_1)=z_1+1.$$
Consider $x,y\in  \partial \B_1$ such that $x_1\geq 0$, $y_1 \geq 0$, $x^Tz\geq y^Tz$, $y^Tw \geq x^Tw$. If $x_2$, $y_2$ are both negative (resp. both positive), we can consider the new couple $(x_1,-x_2)$, $(y_1,y_2)$ (resp. $(x_1,x_2)$, $(y_1,-y_2)$), which gives a bigger (or equal) value for $I$. Hence, we can restrict our study to the case $x_2\geq 0$ and $y_2\leq 0$. We denote in spherical coordinates $x=(1,\theta)$, $y=(1,\eta)$, $z=(\|z\|,\gamma)$  and $w=(\|z\|,2\pi- \gamma)$. Since $x,y\in  \partial \B_1$, $x_1\geq 0$, $y_1 \geq 0$, $x_2\geq 0$ and $y_2\leq 0$ then $\theta \in [0, \pi/2]$ and $\eta\in [3\pi/2,2\pi]$. Furthermore, since $z,w \in \B_2$, we can easily verify that
\begin{equation}\label{cond2}
\gamma\in [0,\pi/4),
\end{equation}
since the straight line parallel to $e_1+e_2$ does not intersect $\B_2$, for every $\Delta \geq 2$.

With this notation we have
$$I(\theta, \eta)=\frac 12 \|z\| \cos (\theta -\gamma)+\frac 12 \|z\| \cos (\eta+\gamma)-\cos(\eta-\theta).$$
It then follows that the critical points of the above function have to satisfy
$$\frac 12 \|z\| \sin (\theta -\gamma)+\sin(\eta-\theta)=0, \quad \mbox{and} \quad -\frac 12 \|z\| \sin (\eta+\gamma)+\sin(\eta-\theta)=0.$$
Subtracting the two equations, since $\|z\|>0$, we deduce that
\begin{equation}\label{crit}
\sin (\theta -\gamma)=-\sin (\eta+\gamma).
\end{equation}
We observe that $\theta -\gamma\in  [-\pi/2,\pi/2]$ and $\eta+\gamma \in [3\pi/2,5\pi/2]$.
Since $\sin(s)$ is injective for $s\in [-\pi/2,\pi/2]$, we deduce that the only solution is $\theta=2\pi-\eta$. This critical point corresponds to a symmetric  couple $x_1=y_1,x_2=-y_2$ and by Step 5 we have $I(x,y)\leq H(e_1,-e_1)$.
Up to rotation, the boundary cases of $\theta$ and $\eta$ coincide. Hence, we are just left to study the boundary case $\theta=0$, or equivalently $(x,y)=(e_1,y)$ (the boundary case $\theta=\pi/2$ gives $x_1\leq 0$ and will be threated in Step 7). In this case, since $y_1\geq 0$ and $ y_1,y_2,z_2\in [-1,1]$
$$I(e_1,y)=\frac 12 z_1+ \frac 12 y_1z_1 -  \frac 12 y_2z_2 -y_1\leq  \frac 12 z_1(1+y_1) +1/2 < z_1+1 =H(e_1,-e_1).$$

\noindent
{\em {\bf Step 7}. We solve Problem \eqref{p6}.}

{\em Proof of Step 7.}  We now assume that $x,y\in  \partial \B_1$, $x_1\leq 0,y_1 \geq 0$, $x^Tz\geq y^Tz$ and $y^Tw \geq x^Tw$.  As explained in Step 6, we can also assume that $x_2 \geq 0$ and $y_2 \leq 0$. This implies that, using the notation of Step 6, we need to study the domain
\begin{equation}\label{condp}
(\theta,\eta) \in [\pi/2, \pi/2+2\gamma]\times [3\pi/2 ,2 \pi].
\end{equation}
Using a similar line of argument as in Step 6, all critical points in this region must satisfy \eqref{crit}.
By \eqref{cond2}, since the function $\sin(s)$ is strictly increasing in $[-\gamma, \pi/2-\gamma]$ and $\sin(s)> \sin(\pi/2-\gamma)=\sin (\pi/2+\gamma)$ for every $s\in (\pi/2-\gamma, \pi/2+\gamma)$, it follows that $\sin ([\pi/2-\gamma, \pi/2+\gamma])\cap\sin ([-\gamma, \pi/2-\gamma])=\{\sin( \pi/2-\gamma)\}$ and that the equation \eqref{crit} is never satisfied in the interior of the region \eqref{condp}.
This implies that the only maximum points can be achieved at the boundary.
We are just left to check that for all the boundary points $(x,y)$ of this region, $I(x,y)\leq H(e_1,-e_1)$.

We start with $\theta=\pi/2$, that is, all the points of the form $(e_2,y)$ with $y\in  \partial \B_1$. We claim that
$$I(e_2,y)=\frac 12z_2+\frac 12 y_1z_1 -\frac 12y_2z_2-y_2 \leq z_1 + 1, \quad \forall y\in  \partial \B_1.$$
The maximum of the linear function $I(e_2,y)$ over $y\in  \partial \B_1$ is attained at
$$\tilde y=\frac{(z_1,-(z_2+2))}{\sqrt{z_1^2+(z_2+2)^2}}.$$
Hence, it suffices to show that the following inequality is valid:
$$
I(e_2,\tilde y) = \frac 12z_2+\frac12\sqrt{z_1^2+(z_2+2)^2} \leq z_1 +1, \quad  \Delta - 1 \leq z_1 \leq \Delta +1, \; 0 \leq z_2 \leq 1,
$$
Defining $u = z_1 - \Delta$, the above inequality can be equivalently written as:
\begin{equation}\label{a1}
\sqrt{1- u^2} \leq (u+\Delta) - \frac{(u+\Delta)^2}{4(u+\Delta+2)}, \quad -1 \leq u \leq 1,
\end{equation}
First, notice that the right hand side of inequality~\eqref{a1} is increasing in $\Delta$, hence it suffices to show its validity
at $\Delta = 2$. Second this expression is concave is and hence can be lower bounded by its secant line, denoted by $ a u + b$; therefore, it suffices to show that $\sqrt{1- u^2} \leq a u + b$. Squaring both sides, we need to show that $(a u + b)^2 + u^2 \geq 1$ for $-1 \leq u \leq 1$ and it can be checked that the latter inequality is valid.

The calculations for the boundary case $\eta=3\pi/2$, that is all the points $(x,-e_2)$ with $x\in  \partial \B_1$, are symmetric to the case $\theta=\pi/2$ (up to a rotation).

We now consider the boundary case $\eta=2\pi$, that is all the points $(x,e_1)$ with $x\in  \partial \B_1$ and we claim that $I(x,e_1)\leq z_1+1$ for every $x\in  \partial \B_1$. Indeed, since $z_1\geq 1$
$$I(x,e_1)=\frac 12x_1z_1+\frac 12 x_2z_2 +\frac 12z_1-x_1=\left(\frac 12-\frac 1{z_1}\right)x_1z_1+\frac 12 x_2z_2 +\frac 12z_1, \qquad \forall x\in  \partial \B_1.$$
Since $\frac 1{z_1}\in (0,1]$, we have $\left(\frac 12-\frac 1{z_1}\right)x_1z_1\leq \frac 12z_1$ and since
$x_2,z_2\in [-1,1]$, we have
$$I(x,e_1)\leq \frac 12z_1+\frac 12 x_2z_2 +\frac 12z_1\leq  z_1+1= H(e_1,-e_1), \qquad \forall x\in  \partial \B_1.$$

The last boundary case is $\theta=\pi/2+2\gamma$, for every $\eta\in [3\pi/2,2\pi]$.
In this case, we observe that $\theta-\gamma\geq 2\pi-\eta +\gamma$, where $\theta-\gamma$ is the angle between $x$ and $z$
and $2\pi-\eta +\gamma$ is the angle between $y$ and $z$). Hence $x^Tz\leq y^Tz$ and
$$I(x,y)\leq \frac 12 y^Tz +\frac 12 y^Tw -x^Ty \leq y_1z_1+1\leq z_1+1= H(e_1,-e_1).$$
This concludes the proof of Step 7.
\end{proof}

\section*{Acknowledgments}
The authors would like to thank Shuyang Ling and Soledad Villar for fruitful discussions about the recovery properties of SDP relaxations for clustering problems.

\end{document}